\definecolor{Blue}{rgb}{0.3,0.3,0.9}
\newcommand{\sk}{\vspace{0.1in}}
\newtheorem{thm}{Theorem}[section]
\newtheorem{def-thm}[thm]{Definition-Theorem}
\newtheorem{cor}[thm]{Corollary}
\newtheorem{lem}[thm]{Lemma}
\newtheorem{def-lem}[thm]{Definition-Lemma}
\newtheorem{prop}[thm]{Proposition}
\newtheorem{conj}[thm]{Conjecture}
\newtheorem*{ThmA}{Theorem A}
\newtheorem*{ThmB}{Theorem B}
\theoremstyle{definition}
\newtheorem{defn}[thm]{Definition}
\theoremstyle{remark}
\newtheorem{rem}[thm]{Remark}
\numberwithin{thm}{section}
\numberwithin{equation}{section}
\newcommand{\cO}{\mathcal{O}}
\newcommand{\frakm}{\mathfrak{m}}
\newcommand{\frakl}{\mathfrak{l}}
\newcommand{\pp}{\mathfrak{p}}
\newcommand{\frakp}{\mathfrak{p}}
\newcommand{\frakq}{\mathfrak{q}}
\newcommand{\frakf}{\mathfrak{c}}
\newcommand{\fraka}{\mathfrak{a}}
\newcommand{\frakc}{\mathfrak{c}}
\newcommand{\cI}{\mathcal{I}}
\newcommand{\bQ}{\mathbf{Q}}
\newcommand{\bZ}{\mathbf{Z}}
\newcommand{\bC}{\mathbf{C}}
\newcommand{\ro}{\mathbf{Z}_p}
\def\ac{{\rm ac}}
\def\cyc{{\rm cyc}}
\newcommand{\Ac}{{\mathbf{A}^{\rm ac}}}
\newcommand{\Tc}{{\mathbf{T}^{\rm ac}}}
\newcommand{\Gr}{}
\newcommand{\can}{\Psi}
\newcommand{\unr}{R_0}
\newcommand{\tors}{\rm tors}
\begin{document}

\title[$p$-adic heights of Heegner points and Beilinson--Flach elements]
{$p$-adic heights of Heegner points and Beilinson--Flach elements}
\author[F.~Castella]{Francesc Castella}
\address{Mathematics Department, Princeton University, Fine Hall, Princeton,
NJ 08544-1000, USA}
\email{fcabello@math.princeton.edu}

\thanks{This project has received funding from the European Research Council (ERC) under the European Union's
Horizon 2020 research and innovation programme (grant agreement No. 682152).}

\subjclass[2010]{11R23 (primary); 11G05, 11G40 (secondary)}





\maketitle

\begin{abstract}
We give a new proof of Howard's $\Lambda$-adic Gross--Zagier formula \cite{howard-compmath}, which
we extend to the context of indefinite Shimura curves over $\bQ$ attached to nonsplit quaternion algebras.
This formula relates the cyclotomic derivative of a two-variable $p$-adic $L$-function
restricted to the anticyclotomic line to the cyclotomic $p$-adic heights of Heegner points
over the anticyclotomic tower, and our proof,
rather than inspired by the original approaches of Gross--Zagier \cite{GZ} and
Perrin-Riou \cite{PR89}, is via Iwasawa theory,
based on the connection between Heegner points, Beilinson--Flach elements, and their explicit reciprocity laws.
\end{abstract}

\setcounter{tocdepth}{2}
\tableofcontents

\section*{Introduction}

Let $E/\bQ$ be an  elliptic curve of conductor $N$, let $K/\bQ$ be an imaginary quadratic field of
discriminant $-D_K<0$ prime to $N$, and let $f=\sum_{n=1}^\infty a_n(f)q^n\in S_2(\Gamma_0(N))$ be the
normalized newform associated with $E$. 
The field $K$ determines a factorization $N=N^+N^-$
such that a prime $\ell$ divides $N^+$ if $\ell$ is split in $K$, and divides $N^-$ if $\ell$ is inert in $K$.
Throughout this paper, we shall assume that
\begin{equation}\label{eq:sq-free}
\textrm{$N$ is square-free,}\tag{$\square$-free}
\end{equation}
and that $K$ satisfies the following \emph{generalized Heegner hypothesis} relative to $N$:
\begin{equation}\label{eq:HH}
\textrm{$N^-$ is the product of an \emph{even} number of primes.}\tag{Heeg}
\end{equation}

Under the latter hypothesis, $E$ can be realized as a quotient of the Jacobian ${\rm Jac}(X_{N^+,N^-})$
of a Shimura curve $X_{N^+,N^-}$ attached to the pair $(N^+,N^-)$.
More precisely, after possibly replacing $E$ by a $\bQ$-isogenous elliptic curve, we may fix a parametrization
\[
\Phi_E:{\rm Jac}(X_{N^+,N^-})\longrightarrow E.
\]

Fix also a prime $p\geqslant 5$ of good ordinary reduction for $E$,
let $K_\infty^\cyc$ and $K_\infty^\ac$ be the cyclotomic and anticyclotomic $\bZ_p$-extensions of $K$, respectively,
and set $K_\infty=K_\infty^\ac K_\infty^\cyc$. 
Then, by work of Hida
and Perrin-Riou (see \cite{hidaI}, \cite{PR38}, \cite{PR89}),
there is a $p$-adic $L$-function
\begin{equation}\label{eq:SU}
L_p(f/K)\in\ro[[{\rm Gal}(K_\infty/K)]]\otimes_{\ro}\bQ_p\nonumber
\end{equation}
interpolating the central values for the Rankin--Selberg convolution of $f$
with the theta series attached to finite order characters of ${\rm Gal}(K_\infty/K)$.
(This is denoted $L_p(f/K,\Sigma^{(1)})$ in the body of the paper.)
\sk

Letting $\Lambda^\ac:=\ro[[{\rm Gal}(K_\infty^\ac/K)]]$, we may identify
\[
\ro[[{\rm Gal}(K_\infty/K)]]\simeq\Lambda^\ac[[{\rm Gal}(K_\infty^\cyc/K)]],
\]
and hence upon the choice of a topological generator $\gamma^\cyc\in{\rm Gal}(K^\cyc_\infty/K)$, we may
expand 
\begin{equation}\label{eq:intro-expand}
L_p(f/K)=
L_{p,0}(f/K)+L_{p,1}^{\rm cyc}(f/K)(\gamma^\cyc-1)+\cdots\nonumber
\end{equation}
as a power series in $(\gamma^\cyc-1)$ with coefficients in $\Lambda^\ac\otimes_{\bZ_p}\bQ_p$.
As a consequence of hypothesis (\ref{eq:HH}), the sign in the functional equation satisfied by $L_{p}(f/K)$
forces the vanishing of $L_{p,0}(f/K)$, and we are led to consider
the `linear term' $L_{p,1}^{\rm cyc}(f/K)$ in the above expansion. 
\sk


On the other hand, taking certain linear combinations of Heegner points on $X_{N^+,N^-}$ defined
over ring class fields of $K$ of $p$-power conductor, and
mapping them onto $E$ via $\Phi_E$, it is possible to define a 
class $\mathbf{z}_f$ in the Selmer group
\[
{\rm Sel}(K,\Tc):=\varprojlim_n\varprojlim_m{\rm Sel}_{p^m}(E/K_n^{\ac})
\]
attached to the $E$ over the tower $K_\infty^\ac/K$, where ${\rm Sel}_{p^m}(E/K_n^{\ac})\subseteq
H^1(K_n^{\rm ac},E[p^m])$ is the usual $p^m$-th Selmer group.
Let $T_pE=\varprojlim_mE[p^m]$ be the $p$-adic Tate module of $E$.
In Section~\ref{subsec:rubin} we recall the definition of a cyclotomic $\Lambda^\ac$-adic height pairing
\begin{equation}\label{eq:lambda-ht}
\langle\;,\;\rangle_{K^\ac_\infty}^{\rm cyc}:{\rm Sel}_{\Gr}(K,\Tc)
\times{\rm Sel}_{\Gr}(K,\Tc)\longrightarrow\bQ_p\otimes_{\bZ_p}\Lambda^\ac\otimes_{\bZ_p}\mathcal{I}/\mathcal{I}^2,\nonumber
\end{equation}
where $\mathcal{I}$ is the augmentation ideal of $\ro[[{\rm Gal}(K_\infty^\cyc/K)]]$.
Using our fixed topological generator $\gamma^\cyc$, we may view
$\langle\;,\;\rangle_{K^\ac_\infty}^{\rm cyc}$
as taking values in $\Lambda^\ac\otimes_{\bZ_p}\bQ_p$.
\sk

The main result of this paper is then the following $\Lambda^\ac$-adic Gross--Zagier formula.

\begin{ThmA}\label{thm:GZ}
In addition to {\rm (\ref{eq:HH})} and {\rm (\ref{eq:sq-free})}, assume that:
\begin{itemize}
\item{} $p=\pp\overline{\pp}$ splits in $K$,
\item{} $N^-\neq 1$,
\item{} $E[p]$ is ramified at every prime $\ell\mid N^-$,
\item{} ${\rm Gal}(\overline{\bQ}/K)\rightarrow{\rm Aut}_{\bZ_p}(T_pE)$ is surjective.
\end{itemize}
Then we have the equality
\[
(L_{p,1}^{\rm cyc}(f/K))=
(\langle\mathbf{z}_f,\mathbf{z}_f\rangle_{K^\ac_\infty}^{\rm cyc})
\]
as ideals of $\Lambda^\ac\otimes_{\bZ_p}\bQ_p$.
\end{ThmA}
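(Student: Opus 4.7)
The proof is Iwasawa-theoretic and proceeds via Beilinson--Flach elements, following the blueprint sketched in the abstract. The central object is a two-variable Beilinson--Flach class $\mathbf{BF}_f$ living in the Iwasawa cohomology of $T_pE$ over the $\bZ_p^2$-extension $K_\infty=K_\infty^\ac K_\infty^\cyc$ of $K$. The plan is to relate $\mathbf{BF}_f$ to both sides of the desired formula via two distinct \emph{explicit reciprocity laws}, one at each of the primes $\pp$ and $\overline{\pp}$ of $K$ above $p$.

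First, the \emph{cyclotomic} reciprocity law (of Kings--Loeffler--Zerbes type) identifies the image of $\mathbf{BF}_f$ under the Perrin--Riou big dual exponential at $\overline{\pp}$ with (a unit multiple of) the full two-variable $p$-adic $L$-function $L_p(f/K)$. Second, an \emph{anticyclotomic} reciprocity law (of Bertolini--Darmon--Prasanna type) identifies the image at $\pp$ of the restriction of $\mathbf{BF}_f$ to the anticyclotomic line with (a unit multiple of) the Heegner class $\mathbf{z}_f\in{\rm Sel}(K,\Tc)$. Hypothesis (\ref{eq:HH}) forces the central vanishing $L_{p,0}(f/K)=0$, so the relevant restriction is automatically a $\Lambda^\ac$-adic Selmer class and the identification with $\mathbf{z}_f$ takes place cleanly inside ${\rm Sel}(K,\Tc)$.

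The decisive step is then a Rubin--Nekov\'a\v{r}-style \emph{derivative formula}. Since $\mathbf{BF}_f$ specializes on the anticyclotomic line to $\mathbf{z}_f$, it provides a universal norm system along the cyclotomic direction lifting $\mathbf{z}_f$; by the height-derivative formalism stemming from Nekov\'a\v{r}'s Selmer complexes, the linear cyclotomic Taylor coefficient of the local image of $\mathbf{BF}_f$ at $\overline{\pp}$ then computes the cyclotomic height pairing $\langle\mathbf{z}_f,\mathbf{z}_f\rangle_{K^\ac_\infty}^\cyc$. Combined with the first reciprocity law, this Taylor coefficient equals $L_{p,1}^\cyc(f/K)$, producing the claimed equality of ideals in $\Lambda^\ac\otimes_{\bZ_p}\bQ_p$.

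The principal obstacle is the derivative computation itself: extracting the linear cyclotomic term of a Perrin--Riou big logarithm/dual exponential, and matching it with the cyclotomic $\Lambda^\ac$-adic height pairing, requires careful bookkeeping in Nekov\'a\v{r}'s Selmer-complex formalism together with the precise interpolation properties of the Coleman/Perrin--Riou maps. A secondary difficulty, specific to the present setting, is constructing and controlling the Beilinson--Flach class in the indefinite quaternionic framework ($N^-\neq 1$), since the existing literature primarily treats $N^-=1$; this requires a Jacquet--Langlands transfer and a corresponding reformulation of the reciprocity laws compatible with the chosen parametrization $\Phi_E$ of $E$ by ${\rm Jac}(X_{N^+,N^-})$.
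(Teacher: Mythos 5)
Your proposal captures the right raw materials---the Beilinson--Flach class, the two Coleman maps $\mathrm{Col}^{(1)}$ and $\mathrm{Col}^{(2)}$, the vanishing of $L_{p,0}(f/K)$, and a Rubin-style derivative/height formula---but it has a genuine gap at its central step, and the paper's route around this gap is structurally different from what you describe.

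The gap is the asserted ``anticyclotomic reciprocity law'' identifying the anticyclotomic restriction $\mathcal{BF}^{\mathrm{ac}}$ of the Beilinson--Flach class with (a unit multiple of) the Heegner class $\mathbf{z}_f$. No such identification is available, and the paper never claims one. The BDP-type reciprocity law (Theorem~\ref{thm:cas-hsieh}) concerns $\mathrm{loc}_\pp(\mathbf{z}_f)$ and produces $\mathscr{L}_\pp^{\tt BDP}(f/K)$, while $\mathrm{Col}^{(2)}(\mathrm{loc}_\pp(\mathcal{BF}))$ produces $L_p(f/K,\Sigma^{(2)})$, whose anticyclotomic projection by Theorem~\ref{thm:factorization} is $\mathscr{L}_\pp^{\tt BDP}(f/K)^2/\mathscr{L}_\pp^{\mathrm{ac}}(K)$ up to a unit---\emph{not} $\mathscr{L}_\pp^{\tt BDP}(f/K)$. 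So even at the level of images under Coleman maps the two classes do not match up to a unit, let alone as global Selmer classes. (More fundamentally: two non-torsion elements of a rank-one $\Lambda^{\mathrm{ac}}$-module need not generate cotorsion quotients with the same characteristic ideal.) As a consequence, the derivative formula of Theorem~\ref{thm:rubin-ht} applies to $\mathcal{BF}^{\mathrm{ac}}$ and gives a statement about $Ch_{\Lambda^{\mathrm{ac}}}\bigl({\rm Sel}_{\rm Gr}(K,\Tc)/\Lambda^{\mathrm{ac}}\cdot\mathcal{BF}^{\mathrm{ac}}\bigr)$, not about $\langle\mathbf{z}_f,\mathbf{z}_f\rangle$; you cannot transport it to $\mathbf{z}_f$ by a unit.

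What the paper does instead is compare the two classes only through characteristic ideals via \emph{two separate Iwasawa main conjectures}. The derivative/height formula (Lemma~\ref{lem:3.1.1} and Theorem~\ref{thm:rubin-ht}) gives $\mathcal{R}_{\rm cyc}\cdot Ch_{\Lambda^{\mathrm{ac}}}\bigl({\rm Sel}_{\rm Gr}(K,\Tc)/\Lambda^{\mathrm{ac}}\cdot\mathcal{BF}^{\mathrm{ac}}\bigr)=(L_{p,1}^{\rm cyc}(f/K))\cdot\eta^\iota$. The two-variable main conjecture for Beilinson--Flach elements, descended via Proposition~\ref{thm:str} (and established in Corollary~\ref{cor:two-varIMC}), converts $Ch_{\Lambda^{\mathrm{ac}}}\bigl({\rm Sel}_{\rm Gr}(K,\Tc)/\Lambda^{\mathrm{ac}}\cdot\mathcal{BF}^{\mathrm{ac}}\bigr)$ into $Ch_{\Lambda^{\mathrm{ac}}}(X_{{\rm Gr},0}(K,\Ac))$, which Poitou--Tate duality (\ref{eq:PT2}) and Lemma~\ref{lem:str-rel} rewrite as $\mathcal{X}_{\rm tors}\cdot\eta^\iota$. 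Cancelling $\eta^\iota$ yields Theorem~B. Only then does the Heegner-point main conjecture (Theorem~\ref{thm:HP-MC}) enter, converting $\mathcal{X}_{\rm tors}$ into $Ch_{\Lambda^{\mathrm{ac}}}\bigl({\rm Sel}_{\rm Gr}(K,\Tc)/\Lambda^{\mathrm{ac}}\cdot\mathbf{z}_f\bigr)^2$, which together with Theorem~B gives the stated equality. So the comparison between $\mathcal{BF}^{\mathrm{ac}}$ and $\mathbf{z}_f$ is mediated by $X_{{\rm Gr},0}(K,\Ac)$ via two main conjectures, not by any direct reciprocity-law identification of the classes. Your first reciprocity step, the observation about $L_{p,0}=0$, and your concern about the quaternionic setting are all on point, but the heart of the argument is this characteristic-ideal triangulation, which your proposal replaces with an identification that is false.
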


For $N^-=1$, Theorem~A follows from Howard's extension in the anticyclotomic
direction of Perrin-Riou's $p$-adic Gross--Zagier formula (see \cite{howard-compmath}).
Even though it should be possible to extend Howard's calculations to also cover the cases considered
here\footnote{See \cite{disegni-pGZ} for an approach along these lines.}, 
our proof of Theorem~A is completely different. In fact, as we shall explain in the remaining part of
this Introduction, rather than inspired by the work of Gross--Zagier \cite{GZ} and Perrin-Riou~\cite{PR89}, our proof 
is via Iwasawa theory. A key advantage (and arguably the main interest) of this new alternative approach is that it seems amenable to generalization
to higher weights\footnote{After the first version of this paper was released, this has largely been carried out
 by the combined works of B{\"u}y{\"u}kboduk--Lei \cite{BL-ord} and Longo--Vigni \cite{LV-cycles}.},
to the supersingular case (where an analogue of Howard's $\Lambda$-adic Gross--Zagier formula was
not known\footnote{See the recent preprints \cite{cas-wan-SS} and \cite{BuyLei-SS}.}),
and even to the setting of `big Heegner points' in Hida families \cite{howard-invmath}.
The application of the latter extension in connection to well-known nonvanishing conjectures
due to Howard \cite{howard-invmath} and Greenberg \cite{GreenbergCRM} will be explained in
forthcoming work 
\cite{cas-wan-big}.
\sk

Our starting point is the observation (already pointed out by Howard in the introduction to \cite{howard-compmath})
that the content of Theorem~A can be parlayed into an equality without reference to Heegner points.
Indeed, let $K_n^\ac/K$ be the unique subextension of $K_\infty^\ac/K$ with ${\rm Gal}(K_n^\ac/K)\simeq\bZ/p^n\bZ$, and let
${\rm Sel}_{p^\infty}(E/K_n^\ac)\subseteq H^1(K_n^\ac,E[p^\infty])$ be the usual $p$-power Selmer group.
Under hypothesis \rm{(\ref{eq:HH})}, and as a reflection of the above vanishing of $L_{p,0}(f/K)$,
the Selmer group
\[
{\rm Sel}(K,\Ac):=\varinjlim_n\varinjlim_m{\rm Sel}_{p^m}(E/K_n^\ac)
\]
can be shown to have positive corank over the anticyclotomic Iwasawa algebra $\Lambda^\ac$.
Moreover, the `Heegner point main conjecture' formulated by Perrin-Riou \cite{PR-HP} predicts that both
${\rm Sel}(K,\Tc)$ and the Pontryagin dual $X(K,\Ac):={\rm Sel}(K,\Ac)^\vee$ have $\Lambda^\ac$-rank one, and that
letting the subscript `tors' denote the $\Lambda^\ac$-torsion submodule, we have the equality
\begin{equation}\label{eq:intro-HPMC}
Ch_{\Lambda^\ac}(X(K,\Ac)_{\rm tors})\overset{}=
Ch_{\Lambda^\ac}\biggl(\frac{{\rm Sel}(K,\Tc)}{\Lambda^\ac\cdot\mathbf{z}_f}\biggr)^2\tag{HP}
\end{equation}
as ideals in $\Lambda^\ac\otimes_{\bZ_p}\bQ_p$.
\sk

Building on the work of Cornut--Vatsal \cite{CV-doc} (which implies the nontriviality of $\mathbf{z}_f$)
and an extension to the anticyclotomic setting of Mazur--Rubin's theory of Kolyvagin systems \cite{MR-KS},
Howard established in \cite{howard-PhD-I} one of the divisibilities in (\ref{eq:intro-HPMC}),
while the converse divisibility has been more recently obtained by Wan \cite{wan}. 
Letting $\mathcal{R}_{\rm cyc}$ be the characteristic ideal of the cokernel of
$\langle\;,\;\rangle^{\rm cyc}_{K^\ac_\infty}$, one sees without difficulty that Theorem~A
is a consequence of the following $\Lambda^\ac$-adic analog of the Birch and Swinnerton-Dyer conjecture.

\begin{ThmB}
Under the hypotheses of Theorem~A, we have
\begin{equation}\label{eq:reformulate}
(L_{p,1}^{\rm cyc}(f/K))\overset{}=\mathcal{R}_{\rm cyc}\cdot Ch_{\Lambda^\ac}(X(K,\Ac)_{\rm tors})\nonumber
\end{equation}
as ideals in $\Lambda^\ac\otimes_{\bZ_p}\bQ_p$.
\end{ThmB}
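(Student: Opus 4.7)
The plan is to prove Theorem~B directly via Iwasawa theory using Beilinson--Flach elements, and to deduce Theorem~A from Theorem~B by combining it with the Heegner point main conjecture~(\ref{eq:intro-HPMC}), now known in full thanks to Howard and Wan. The reduction of Theorem~A to Theorem~B is formal: on the $\Lambda^\ac$-rank-one module ${\rm Sel}(K,\Tc)$ modulo torsion, with generator $z$, the cyclotomic $\Lambda^\ac$-adic height pairing identifies $\mathcal{R}_{\rm cyc}$ with $(\langle z,z\rangle_{K^\ac_\infty}^{\rm cyc})$; writing $\mathbf{z}_f=c\cdot z$ so that $(c)=Ch_{\Lambda^\ac}({\rm Sel}(K,\Tc)/\Lambda^\ac\cdot\mathbf{z}_f)$, the identity $(\langle\mathbf{z}_f,\mathbf{z}_f\rangle_{K^\ac_\infty}^{\rm cyc})=(c)^2\cdot\mathcal{R}_{\rm cyc}$ combined with the square of~(\ref{eq:intro-HPMC}) and Theorem~B yields Theorem~A.

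To prove Theorem~B itself, I would work over the full $\bZ_p^2$-extension $K_\infty/K$. The Beilinson--Flach Euler system of Lei--Loeffler--Zerbes produces a class $\kappa_{\rm BF}\in H^1_{\rm Iw}(K_\infty,T_pE)$, and the explicit reciprocity law of Kings--Loeffler--Zerbes identifies a Perrin-Riou-style big logarithm at $\pp$ evaluated on $\kappa_{\rm BF}$ with the two-variable $p$-adic $L$-function $L_p(f/K)$, up to an explicit nonzero factor. Feeding $\kappa_{\rm BF}$ into the Euler/Kolyvagin system machinery, and combining with Poitou--Tate duality and the reciprocity law in the opposite direction, should yield a two-variable Iwasawa main conjecture identifying $(L_p(f/K))$ with the characteristic ideal of an appropriate dual Selmer module over $\Lambda:=\bZ_p[[{\rm Gal}(K_\infty/K)]]\otimes_{\bZ_p}\bQ_p$.

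The heart of the argument is then a derivative calculation along the cyclotomic direction. Hypothesis~(\ref{eq:HH}) forces $L_{p,0}(f/K)=0$, so both sides of the two-variable main conjecture vanish modulo the augmentation ideal $\mathcal{I}$ of the cyclotomic variable; extracting first-order terms in $\mathcal{I}/\mathcal{I}^2$ produces $L_{p,1}^{\rm cyc}(f/K)$ on the analytic side. On the algebraic side, a control-theorem calculation relates the derivative of the characteristic ideal of the two-variable Selmer module to $Ch_{\Lambda^\ac}(X(K,\Ac)_{\rm tors})$ multiplied by a regulator term, which by the Iwasawa-theoretic derivative formalism of Rubin and Nekov\'a\v{r} is precisely $\mathcal{R}_{\rm cyc}$: the cyclotomic $\Lambda^\ac$-adic height arises as the natural obstruction to lifting cohomology classes from the anticyclotomic line to the full two-variable tower.

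I expect the main obstacle to be matching, at the level of first-order derivatives, the Perrin-Riou regulator on the Beilinson--Flach side with the cyclotomic $p$-adic height on the Nekov\'a\v{r} side. This requires an independent local-at-$\pp$ computation showing that the infinitesimal cyclotomic deformation of the big logarithm applied to $\kappa_{\rm BF}$ agrees with the local component at $\pp$ of Nekov\'a\v{r}'s $\Lambda^\ac$-adic height, and is the unique point where the hypothesis $p=\pp\overline{\pp}$ is essential. Secondary technical issues include verifying the running hypotheses for the two-variable Euler system argument (big image, ramification of $E[p]$ at primes $\ell\mid N^-$) and controlling integrality so that the final statement holds as an equality, not merely a divisibility, of ideals in $\Lambda^\ac\otimes_{\bZ_p}\bQ_p$.
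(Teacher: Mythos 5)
Your high-level scaffolding (use Beilinson--Flach classes and their explicit reciprocity laws, relate $L_{p,1}^{\rm cyc}$ to a cyclotomic height via a Rubin-type derivative formula, feed in an Iwasawa main conjecture, and then reduce Theorem~A to Theorem~B via the Heegner point main conjecture) matches the paper's strategy, but you diverge from it at precisely the step the paper is most careful about. You propose to establish the two-variable main conjecture by ``feeding $\kappa_{\rm BF}$ into the Euler/Kolyvagin system machinery'' for Beilinson--Flach elements; the paper deliberately does \emph{not} do this. As stated in its introduction, the author avoids the Euler-system properties of Beilinson--Flach classes altogether, because running that machinery over $K$ requires twisting $E$ by a $p$-distinguished character (the hypothesis of being ``Eisenstein-free'' mod $p$ for the relevant residual representation) that is undesirable here. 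Instead, the paper deduces the needed anticyclotomic main conjecture (Theorem~\ref{thm:HP-MC}) from Howard's Kolyvagin-system divisibility for Heegner points, Wan's converse divisibility, the BDP explicit reciprocity law, and a $\mu=0$ input; it then lifts this to a two-variable statement (Corollary~\ref{cor:two-varIMC}) via a control argument and descends back to $\Lambda^{\rm ac}$ (Proposition~\ref{thm:str}). Your route, if it could be made to work under the stated hypotheses, would be a genuine alternative, but as written it quietly reinstates exactly the technical obstruction the paper is engineered to avoid. Secondarily, your ``derivative calculation'' is described only morally: the paper makes it precise via an explicit Rubin-type height formula (Theorem~\ref{thm:rubin-ht}) computing $\langle\mathcal{BF}^{\rm ac}_n,b\rangle^{\rm cyc}_{K_n^{\rm ac}}$ in terms of a local pairing against $\beta_n$, where $\beta_n$ solves $(\gamma^{\rm cyc}-1)\beta_n={\rm loc}_{\overline{\pp}}(\mathcal{BF}(L_n))$ (Lemma~\ref{lem:3.1.1}); this is the concrete form of the ``obstruction to lifting'' you invoke, and it is where $L_{p,1}^{\rm cyc}$ actually enters. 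Finally, the bookkeeping via Poitou--Tate that produces the auxiliary ideal $\eta$ on both sides and allows it to cancel is an essential structural point that your outline does not surface.
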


For rational elliptic curves with complex multiplication by $K$,
an analog of Theorem~B was obtained by Agboola--Howard~\cite{AHsplit} using the Euler system
of elliptic units, Rubin's proof of the Iwasawa main conjecture for $K$,
and a variant of Rubin's $p$-adic height formula \cite{rubin-ht}.
By (\ref{eq:sq-free}), 
the CM setting is excluded here,
but nonetheless we are able to prove Theorem~B by a strategy similar to theirs,
using the Euler system of Beilinson--Flach elements developed 
in a remarkable series of works \cite{LLZ,LLZ-K,KLZ2,KLZ1} by Lei--Loeffler--Zerbes and Kings--Loeffler--Zerbes
which refined and generalized earlier work of Bertolini--Darmon--Rotger \cite{BDR1,BDR2}.
\sk

Notwhithstanding this similarity,
a notable difference between the approach in this paper and that in \cite{AHsplit} is that,
even though we essentially rely on the explicit reciprocity laws for Beilinson--Flach elements \cite{KLZ2},
we do not exploit their Euler system properties; 
instead, we build on Howard's results \cite{howard-PhD-I,howard-PhD-II}
on the anticyclotomic Euler system of classical Heegner points. 
This appears to be an important technical point, since it allows us to dispense with the twist of $E$ by a
rather undesirable $p$-distinguished character.
\sk


\noindent\emph{Acknowledgements.}
It is a pleasure to thank David Loeffler and Sarah Zerbes for their interest in this work.
We also thank Victor Rotger and Chris Skinner for enlightening conversations, and
the anonymous referee for a very careful reading of this paper and a number of suggestions
which lead to significant improvements in the exposition.

\section{$p$-adic $L$-functions}\label{sec:padicL}

Throughout this section, we let $f=\sum_{n=1}^\infty a_n(f)q^n\in S_2(\Gamma_0(N_f))$
be a normalized newform of conductor $N_f$, and let $K$ be an imaginary quadratic field of discriminant $-D_K<0$ prime to $N_f$.
We denote by $\cO_K$ the ring of integers of $K$. Fix a prime $p\nmid 6N_fD_K$, and assume that
\begin{equation}\label{eq:spl}
\textrm{$p=\frakp\overline{\pp}$ splits in $K$.}\tag{spl}
\end{equation}
Fix complex and $p$-adic embeddings
$\overline{\bQ}\overset{\imath_\infty}\hookrightarrow\bC$ and
$\overline\bQ\overset{\imath_p}\hookrightarrow\bC_p$, and let $\pp$ be
the prime of $K$ above $p$ induced by $\imath_p$.
Finally, since it will suffice for the applications in this paper, we assume that
the number field generated by the coefficients $a_n(f)$ embeds into $\bQ_p$.

\subsection{Hida's $p$-adic Rankin $L$-series, I}\label{sec:2varL}

Let $\Xi_K$ denote the set of algebraic Hecke characters
$\psi:K^\times\backslash\mathbb{A}_K^\times\rightarrow\bC^\times$.
We say that $\psi\in\Xi_K$ has infinity type $(\ell_1,\ell_2)\in\bZ^2$ if
\[
\psi_\infty(z)=z^{\ell_1}\overline{z}^{\ell_2},
\]
where for each place $v$ of $K$ we let $\psi_v:K_v^\times\rightarrow\bC^\times$ be
the $v$-component of $\psi$. The conductor of $\psi$ is the largest ideal $\frakf\subseteq\cO_K$ such that
$\psi_\frakq(u)=1$ for all $u\in(1+\frakf\cO_{K,\frakq})^\times\subseteq K_\frakq^\times$.
If $\psi$ has conductor $\frakc_\psi$ and
$\fraka$ is any fractional ideal of $K$ prime to $\frakc_\psi$,
we write $\psi(\fraka)$ for $\psi(a)$, where $a$ is any idele satisfying $a\hat\cO_K\cap K=\fraka$ and
$a_\frakq=1$ for all primes $\frakq$ dividing $\frakc_\psi$. As a function on fractional ideals, then $\psi$ satisfies
\[
\psi((\alpha))=\alpha^{-\ell_1}\overline{\alpha}^{-\ell_2}
\]
for all $\alpha\in K^\times$ with $\alpha\equiv 1\pmod{\frakf_\psi}$.


We say that a Hecke character $\psi$ of infinity type $(\ell_1,\ell_2)$ is
\emph{critical} (for $f$) if the value $s=1$ is a critical value in the sense of Deligne
for
\[
L(f/K,\psi,s)=L\left(\pi_f\times\pi_{\psi},s+\frac{\ell_1+\ell_2-1}{2}\right),
\]
where $L(\pi_f\times\pi_{\psi},s)$ is the $L$-function for the Rankin--Selberg
convolution of the automorphic representations of $\mathbf{GL}_2(\mathbb{A})$
corresponding to $f$ and the theta series $\theta_{\psi}$ associated to $\psi$.
The set of infinity types of critical characters can be written as the disjoint union
\[
\Sigma=\Sigma^{(1)}\sqcup\Sigma^{(2)}\sqcup\Sigma^{(2')},
\]
with $\Sigma^{(1)}=\{(0,0)\}$,
$\Sigma^{(2)}=\{(\ell_1,\ell_2)\colon \ell_1\leqslant -1, \ell_2\geqslant 1\}$,
$\Sigma^{(2')}=\{(\ell_1,\ell_2)\colon \ell_2\leqslant -1, \ell_1\geqslant 1\}$.

Denote by $\psi^\rho$ the character obtained by composing $\psi:K^\times\backslash\mathbb{A}_K^\times\rightarrow\bC^\times$
with the complex conjugation on $\mathbb{A}_K^\times$. The involution $\psi\mapsto\psi^\rho$ on $\Xi_K$ has the effect on infinity types
of interchanging the regions $\Sigma^{(2)}$ and $\Sigma^{(2')}$ (while leaving $\Sigma^{(1)}$ stable).
Since the values $L(f/K,\psi,1)$ and $L(f/K,\psi^\rho,1)$ are the same, for the purposes of
$p$-adic interpolation we may restrict our attention
to the first two subsets in the above the decomposition of $\Sigma$. 

\begin{defn}
Let $\psi=\psi^{\infty}\psi_\infty\in\Xi_K$ be an algebraic Hecke character
of infinity type $(\ell_1,\ell_2)$. The \emph{$p$-adic avatar} of $\psi$
is the character $\hat{\psi}:K^\times\backslash\hat{K}^\times\rightarrow\bC_p^\times$
defined by
\[
\hat\psi(z)=\imath_p\imath_\infty^{-1}(\psi^{\infty}(z))z_\pp^{\ell_1}z_{\overline{\pp}}^{\ell_2}.
\]
\end{defn}

For each ideal $\frakm$ of $K$, let $H_\frakm\simeq {\rm Gal}(K(\frakm)/K)$
be the ray class group of $K$ modulo $\frakm$, and set $H_{p^\infty}=\varprojlim_rH_{p^r}$.
By the 
Artin reciprocity map, the correspondence $\psi\mapsto\hat\psi$ establishes a
bijection between the set of algebraic Hecke characters of $K$ of conductor dividing $p^\infty$ and
the set of locally algebraic $\overline{\bQ}_p$-valued characters of $H_{p^\infty}$.

For the next theorem, let $N$ be a positive integer divisible by $D_KN_f$ and with
the same prime factors as $D_KN_f$.

\begin{thm}\label{thm:hida}
Let $\alpha$ and $\beta$ be the roots of $x^2-a_p(f)x+p$, ordered so that $0\leqslant v_p(\alpha)\leqslant v_p(\beta)$.
\begin{itemize}
\item[$(i)$]{}
There exists a $p$-adic $L$-function
$L_p(f/K,\Sigma^{(2)})\in{\rm Frac}(\ro[[H_{p^\infty}]])$
such that for every character $\chi\in\Xi_K$ of trivial conductor
and infinity type $(\ell_1,\ell_2)\in\Sigma^{(2)}$, we have
\begin{align*}
L_p(f/K,\Sigma^{(2)})(\hat\psi)&=\frac{\Gamma(\ell_2)\Gamma(\ell_2+1)\cdot\mathcal{E}(\psi,f)}
{(1-\psi^{1-\rho}(\pp))(1-p^{-1}\psi^{1-\rho}(\pp))}
\cdot\frac{L(f/K,\psi,1)}{(2\pi)^{2\ell_2+1}\cdot\langle\theta_{\psi_{\ell_2}},\theta_{\psi_{\ell_2}}\rangle_{N}},
\end{align*}
where $\theta_{\psi_{\ell_2}}$ is the theta series of weight $\ell_2-\ell_1+1\geqslant 3$ associated to the
Hecke character $\psi_{\ell_2}:=\psi{\rm\mathbf{N}}_K^{\ell_2}$ of infinity type $(\ell_1-\ell_2,0)$, and
\begin{equation}\label{eq:Euler-Hida}
\mathcal{E}(\psi,f)=(1-p^{-1}\psi(\pp)\alpha)(1-p^{-1}\psi(\pp)\beta)
(1-\psi^{-1}(\overline\pp)\alpha^{-1})(1-\psi^{-1}(\overline\pp)\beta^{-1}).\nonumber
\end{equation}
\item[$(ii)$]{}
If $v_p(\alpha)=0$,
there exists a $p$-adic $L$-function
$L_p(f/K,\Sigma^{(1)})\in\bQ_p\otimes_{\bZ_p}\ro[[H_{p^\infty}]]$
such that for every finite order character $\psi\in\Xi_K$
of conductor $\frakp^m\overline{\frakp}^n$, we have
\begin{align*}
L_p(f/K,\Sigma^{(1)})(\hat\psi)&=
\frac{\prod_{\frakq\mid p}(1-p^{-1}\beta\psi(\frakq))
(1-\alpha^{-1}\psi^{-1}(\frakq))}{(1-\beta\alpha^{-1})(1-\beta p^{-1}\alpha^{-1})}
\cdot C(f,\psi)\cdot\frac{L(f/K,\psi,1)}{8\pi^2\cdot\langle f,f\rangle_N},
\end{align*}
where
\[
C(f,\psi)=
\left\{
\begin{array}{ll}
-iN & \textrm{if $m=n=0$;}\\
\tau(\psi)\alpha^{-m-n+1}(Np^{n+m})^{1/2} & \textrm{if $m+n>0$,}
\end{array}
\right.
\]
with $\tau(\psi)$ denoting the root number of $\psi$.
\end{itemize}
\end{thm}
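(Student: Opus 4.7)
The plan is to construct both $p$-adic $L$-functions via the classical Rankin--Selberg integral representation for $L(f/K,\psi,s)$, combined with Hida's theory of ordinary $p$-adic modular forms and $\Lambda$-adic Eisenstein series. I would treat the two parts in parallel, since they share the same skeleton and differ essentially in the weight of the Eisenstein series and theta series used.

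First, for part $(i)$, I would recall the Rankin--Selberg identity expressing $L(f/K,\psi,s)$, at the critical point $s=1$ and for $\psi$ of infinity type $(\ell_1,\ell_2)\in\Sigma^{(2)}$, as (a product of elementary factors and Gamma factors times) the Petersson inner product $\langle f,\theta_{\psi_{\ell_2}}\cdot E_{\psi,s}\rangle_N$, where $E_{\psi,s}$ is a nearly holomorphic Eisenstein series whose nebentype and weight are determined by $\psi$. Next I would package, as $\psi$ varies over critical characters of conductor dividing $p^\infty$, the $p$-depleted product $\theta_{\psi_{\ell_2}}^{[p]}\cdot E_{\psi}^{[p]}$ into a single $\ro[[H_{p^\infty}]]$-valued measure on $H_{p^\infty}$ whose moments at locally algebraic characters recover these Eisenstein--theta products. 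This step uses Hida's construction of a $\Lambda$-adic Eisenstein measure and, crucially, the $q$-expansion principle to verify $p$-adic continuity.

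Applying Hida's ordinary projector $e_{\rm ord}$ to this measure produces an ordinary $\Lambda^{\rm ord}$-adic form, and pairing against the ordinary $p$-stabilization $f_\alpha$ of $f$ via Hida's perfect duality between ordinary forms and the Hecke algebra yields an element $L_p(f/K,\Sigma^{(2)})$ of ${\rm Frac}(\ro[[H_{p^\infty}]])$. At an arithmetic point $\hat\psi$ with $\psi$ in $\Sigma^{(2)}$, the evaluation is
\[
L_p(f/K,\Sigma^{(2)})(\hat\psi)\;=\;\frac{\langle f_\alpha^*,e_{\rm ord}(\theta_{\psi_{\ell_2}}^{[p]}\cdot E_{\psi}^{[p]})\rangle_{N,p}}{\langle f_\alpha^*,f_\alpha\rangle_{N,p}},
\]
and the claimed interpolation formula then follows by comparing this $p$-adic Petersson product with the classical one: the quotient of periods reintroduces $(2\pi)^{2\ell_2+1}\cdot\langle\theta_{\psi_{\ell_2}},\theta_{\psi_{\ell_2}}\rangle_N$ in the denominator, while the effect of $p$-depleting and then $e_{\rm ord}$-projecting, together with comparing $\langle f_\alpha^*,f_\alpha\rangle_{N,p}$ with $\langle f,f\rangle_N$, produces exactly the Euler factor $\mathcal{E}(\psi,f)$ and the correction factor $(1-\psi^{1-\rho}(\pp))(1-p^{-1}\psi^{1-\rho}(\pp))$ in the denominator.

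For part $(ii)$, the argument runs in the same way, but the Rankin--Selberg integral is now $\langle f,\theta_\psi\cdot E_{\psi,s}^{(1)}\rangle_N$ with $\theta_\psi$ of weight $1$ and $E_{\psi,s}^{(1)}$ a weight-$1$ Eisenstein series. The hypothesis $v_p(\alpha)=0$ is needed precisely to apply Hida's ordinary projector and invert $\langle f_\alpha^*,f_\alpha\rangle_{N,p}$; the resulting distribution is bounded, which is why $L_p(f/K,\Sigma^{(1)})$ lies in $\bQ_p\otimes_{\bZ_p}\ro[[H_{p^\infty}]]$ rather than merely in the fraction field. The explicit constants $C(f,\psi)$, distinguishing the trivial-character and ramified cases, emerge from the local archimedean and non-archimedean integral computations for the Eisenstein series (including the Gauss sum $\tau(\psi)$ at ramified places and the factor $(Np^{n+m})^{1/2}$ coming from level-raising). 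The main obstacle throughout is the local computation at $p$: one has to carefully track how the passage from $\theta_\psi\cdot E_\psi$ to its $p$-depletion, and then the action of $e_{\rm ord}$, modify the local zeta integrals at $\pp$ and $\overline\pp$ in order to read off $\mathcal{E}(\psi,f)$ in the precise form stated, and to verify that the result is independent of the auxiliary choices (level $N$, test vectors) made in the Rankin--Selberg representation.
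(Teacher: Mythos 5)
Your overall framework (Hida's Rankin--Selberg construction via ordinary projection of a $\Lambda$-adic Eisenstein--theta measure) is indeed the route taken by the references the paper cites (\cite{LLZ-K}, \cite{BL-ord}), and your sketch of part~$(ii)$ is essentially correct, including the role of the hypothesis $v_p(\alpha)=0$. However, there is a genuine conceptual error in your treatment of part~$(i)$: you run the two parts ``in parallel'' using the same pairing against $f_\alpha$, but for $\Sigma^{(2)}$ this pairing is not even defined. The theta series $\theta_{\psi_{\ell_2}}$ has weight $\ell_2-\ell_1+1\geqslant 3$, strictly larger than the weight~$2$ of $f$, so the Rankin--Selberg unfolding is $\langle\theta_{\psi_{\ell_2}},\,f\cdot E\rangle$ with $\theta_{\psi_{\ell_2}}$ as the \emph{dominant} form, and the $p$-adic Petersson product $\langle f_\alpha^*,\,e_{\rm ord}(\theta^{[p]}_{\psi_{\ell_2}}E^{[p]}_\psi)\rangle_{N,p}$ you write down is a pairing between forms of different weights.

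The correct construction for part~$(i)$ fixes $f$, forms the $p$-depleted product $f^{[p]}\cdot E^{[p]}_\psi$ of weight equal to that of $\theta_{\psi_{\ell_2}}$, projects ordinarily into the $\Lambda$-adic CM family $\mathbf{g}$ (not the family through $f$), and pairs against the dual of $\mathbf{g}$, normalizing by the congruence module $I_{\mathbf{g}}$ of $\mathbf{g}$. This is precisely why $L_p(f/K,\Sigma^{(2)})$ lives in ${\rm Frac}(\ro[[H_{p^\infty}]])$ rather than in $\bQ_p\otimes_{\bZ_p}\ro[[H_{p^\infty}]]$ (the denominator $I_{\mathbf{g}}$ is a nontrivial fractional ideal, cf.\ the appearance of $I_{\mathbf{g}}$ in Theorem~\ref{thm:Col}), and why the interpolation formula has $\langle\theta_{\psi_{\ell_2}},\theta_{\psi_{\ell_2}}\rangle_N$, not $\langle f,f\rangle_N$, in the denominator. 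Your appeal to a ``quotient of periods'' to reintroduce $\langle\theta_{\psi_{\ell_2}},\theta_{\psi_{\ell_2}}\rangle_N$ papers over this: the two parts of the theorem are two \emph{different} $p$-adic $L$-functions, arising from the two possible choices of which Hida family carries the congruence-module normalization, not a single $p$-adic $L$-function read off in two ways. Once you switch part~$(i)$ to the $\mathbf{g}$-projected construction, the rest of your outline (the $q$-expansion principle, the local computation at $p$ producing $\mathcal{E}(\psi,f)$ and the correction $(1-\psi^{1-\rho}(\pp))(1-p^{-1}\psi^{1-\rho}(\pp))$) goes through as in \cite{LLZ-K}.
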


\begin{proof}
The first part is a reformulation of \cite[Thm.~6.1.3(ii)]{LLZ-K}, and the second part follows from
an extension of the calculations in \cite[Thm.~6.1.3(i)]{LLZ-K}) (see e.g. \cite[Thm.~2.1]{BL-ord} for details). 
%
\end{proof}

\begin{rem}\label{rem:non0}
The $p$-adic $L$-functions of Theorem~\ref{thm:hida} are nonzero.
Indeed, as shown in the proof of \cite[Prop.~9.2.1]{KLZ0}, the nonvanishing of $L_p(f/K,\Sigma^{(2)})$
follows immediately from the existence of infinitely many characters $\psi$ with infinity type $(\ell_1,\ell_2)\in\Sigma^{(2)}$
for which the Euler product defining $L(f/K,\psi,s)$ converges at $s=1$, 
while the nonvanishing of $L_p(f/K,\Sigma^{(1)})$ (in fact, even of the ``cyclotomic'' restriction
of $L_p(f/K,\Sigma^{(1)})$ to characters of $H_{p^\infty}$ factoring through the norm map)
follows from the nonvanishing results of \cite{rohrlich}.
\end{rem}


\subsection{Anticyclotomic $p$-adic $L$-functions}\label{sec:anti-L}



We keep the notations from $\S\ref{sec:2varL}$, and write
\[
N_f=N^+N^-
\]
with $N^+$ (resp. $N^-$) equal to the product of the prime factors of $N_f$ split (resp. inert) in $K$.
As in the Introduction, we say that the pair $(f,K)$ satisfies the \emph{generalized Heegner hypothesis} if
\begin{equation}\label{def:heeg}
\textrm{$N^-$ is the square-free product of an \emph{even} number of primes.}\tag{{Heeg}}
\end{equation}

Let $K_\infty/K$ be the $\bZ_p^2$-extension of $K$, and set $\Gamma_K={\rm Gal}(K_\infty/K)$.
We may decompose
\[
H_{p^\infty}\simeq\Delta\times\Gamma_K
\]
for a finite group $\Delta$.
The Galois group ${\rm Gal}(K/\bQ)$ acts naturally on $\Gamma_K$;
let $\Gamma^\cyc\subseteq\Gamma_K$ be the fixed part by this action, and set $\Gamma^\ac:=\Gamma_K/\Gamma^\cyc$.
Then $\Gamma^\ac\simeq{\rm Gal}(K^\ac_\infty/K)$ is identified with the Galois group of the \emph{anticyclotomic}
$\bZ_p$-extension of $K$, on which we have $\tau\sigma\tau^{-1}=\sigma^{-1}$ for the non-trivial element $\tau\in{\rm Gal}(K/\bQ)$.
Similarly, we say that a character $\psi$ is \emph{anticyclotomic} if
$\psi^\rho=\psi^{-1}$.

Assume that $a_p(f)\in\bZ_p^\times$, so that the $p$-adic $L$-function $L_p(f/K,\Sigma^{(1)})$
introduced in Theorem~\ref{thm:hida} is defined, and let $L_p^{\ac}(f/K,\Sigma^{(1)})$ denote its image under the
map 
$\bQ_p\otimes_{\bZ_p}\ro[[H_{p^\infty}]]\rightarrow\bQ_p\otimes_{\bZ_p}\ro[[\Gamma^\ac]]$
induced by the natural projection $H_{p^\infty}\twoheadrightarrow\Gamma^\ac$.

\begin{prop}\label{thm:hida-1}
If the generalized Heegner hypothesis ${\rm (Heeg)}$ holds, then
$L_p^\ac(f/K,\Sigma^{(1)})$ is identically zero.
\end{prop}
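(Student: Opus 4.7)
The plan is to verify vanishing at a Zariski dense set of points, reducing to a sign calculation in the functional equation.

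First, I would observe that the $p$-adic avatars $\hat\psi$ of finite order anticyclotomic Hecke characters $\psi$ of $K$ of $p$-power conductor are dense among the continuous characters of $\Gamma^\ac$, so it suffices to prove $L_p^\ac(f/K,\Sigma^{(1)})(\hat\psi)=0$ for every such $\psi$. The interpolation formula of Theorem~\ref{thm:hida}(ii), composed with the projection $H_{p^\infty}\twoheadrightarrow\Gamma^\ac$, expresses this value as an explicit constant $c(f,\psi)$ times the classical central value $L(f/K,\psi,1)$. The nonvanishing of $c(f,\psi)$ on a Zariski dense subset of such $\psi$ follows from $\alpha\neq\beta$ (ordinarity of $f$, whence the denominator $(1-\beta\alpha^{-1})(1-\beta p^{-1}\alpha^{-1})$ is nonzero outside a thin set) together with the nonvanishing of the Euler factors at $\pp,\overline{\pp}$ for finite order $\psi$ outside a proper closed subscheme. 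The proposition thus reduces to showing $L(f/K,\psi,1)=0$ for every finite order anticyclotomic $\psi$.

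For this, I would invoke the functional equation of the Rankin--Selberg $L$-function $L(f/K,\psi,s)$. The identity $L(f/K,\psi,s)=L(f/K,\psi^\rho,s)$, combined with the anticyclotomic relation $\psi^\rho=\psi^{-1}$, turns the standard functional equation $s\leftrightarrow 2-s$ into a self-dual functional equation centered at $s=1$ with global root number $\epsilon(f/K,\psi)\in\{\pm 1\}$. A sign of $-1$ forces odd order of vanishing at the central point, hence $L(f/K,\psi,1)=0$, reducing the problem entirely to the computation of this root number.

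The main step is therefore the sign calculation $\epsilon(f/K,\psi)=-1$ under (Heeg). Factoring $\epsilon(f/K,\psi)=\prod_v\epsilon_v$ into local root numbers, the archimedean place contributes $-1$ (from $f$ of weight $2$ and $\psi$ of infinity type $(0,0)$); the split primes, namely those dividing $N^+$ and the prime $p$, contribute $+1$; and each inert prime $\ell\mid N^-$ contributes $-1$, by the standard local formula for anticyclotomic Rankin--Selberg twists at inert primes where $f$ has trivial nebentypus and $\psi$ is locally trivial. Under (Heeg), the set $\{\ell\mid N^-\}$ has even cardinality, so the total sign is $-1$. I expect this local sign calculation to be the main technical obstacle: while the underlying local root number formulas are classical (going back to Jacquet--Langlands), accurate bookkeeping of the normalizations, particularly when $\psi$ has nontrivial $p$-power conductor so that a Gauss sum factor $\tau(\psi)$ enters $c(f,\psi)$, requires care.
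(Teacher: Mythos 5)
Your proposal is correct and follows essentially the same route as the paper: reduce by density to finite-order anticyclotomic $\psi$, invoke the interpolation formula, and conclude from the vanishing of the central $L$-value forced by the root number being $-1$ under (Heeg) (the paper simply cites Zhang's article for the sign computation you sketch). One small inefficiency: your paragraph arguing that the interpolation constant $c(f,\psi)$ is nonzero on a dense set is superfluous — since the goal is to show the interpolated value is $0$, it suffices that $L(f/K,\psi,1)=0$, and then $L_p^\ac(f/K,\Sigma^{(1)})(\hat\psi)=c(f,\psi)\cdot 0=0$ whether or not $c(f,\psi)$ vanishes.
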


\begin{proof}
If $\psi$ is any finite order character of $K$ as in the range of interpolation for $L_p(f/K,\Sigma^{(1)})$
which is anticyclotomic, then the Rankin--Selberg $L$-function $L(f/K,\psi,s)$ is self-dual, with
a functional equation relating its values at $s$ and $2-s$. By hypothesis (Heeg), the sign in this
functional equation is $-1$ (see e.g. \cite[\S{3}]{zhang-AJM}), and so $L(f/K,\psi,1)=0$. The result
thus follows from the interpolation property of $L_p(f/K,\Sigma^{(1)})$.
\end{proof}

In contrast with the immediate proof of Proposition~\ref{thm:hida-1}, the understanding of
$L_p^\ac(f/K,\Sigma^{(2)})$, defined as the image of the $p$-adic $L$-function $L_p(f/K,\Sigma^{(2)})$
of Theorem~\ref{thm:hida} under the natural projection
${\rm Frac}(\ro[[H_{p^\infty}]])\rightarrow{\rm Frac}(\ro[[\Gamma^\ac]])$, requires more work.

To proceed, we need to recall the properties of another anticyclotomic $p$-adic $L$-function.
Let $K_\Delta$ be the fixed field of $\Gamma_K$ in $K(p^\infty)$, so that
${\rm Gal}(K_\Delta/K)\simeq\Delta$. The compositum $K_\infty^\ac K_\Delta$ contains $K[p^\infty]=\cup_{n\geqslant 0}K[p^n]$,
where $K[m]$ denotes the ring class field of $K$ of conductor $m$.
Let $\bQ_p^{\rm nr}$ be the maximal unramified extension of $\bQ_p$,
and denote by $\unr$ the completion of its ring of integers.
Also, let
\[
\rho_f:
{\rm Gal}(\overline{\bQ}/\bQ)\longrightarrow{\rm GL}_2(\bQ_p)
\]
be the Galois representation associated with $f$, and denote by $\bar{\rho}_f$ 
the corresponding semisimple residual representation.

\begin{thm}\label{thm:bdp}
Assume that $p=\pp\overline{\pp}$ splits in $K$ and that hypothesis $({\rm Heeg})$ holds.
There exists a $p$-adic $L$-function
$\mathscr{L}^{\tt BDP}_{\pp}(f/K)\in\unr[[\Gamma^\ac]]$ such that if
$\hat{\psi}:\Gamma^\ac\rightarrow\bC_p^\times$ is the $p$-adic avatar of an unramified Hecke character $\psi$
with infinity type $(-\ell,\ell)$ with $\ell\geqslant 1$, then
\begin{align*}
\biggl(\frac{\mathscr{L}_{\pp}^{\tt BDP}(f/K)(\hat\psi)}{\Omega_p^{2\ell}}\biggr)^2&=
\Gamma(\ell)\Gamma(\ell+1)
\cdot(1-p^{-1}\psi(\pp)\alpha)^2\cdot(1-p^{-1}\psi(\pp)\beta)^2
\cdot \frac{L(f/K,\psi,1)}{\pi^{2\ell+1}\cdot\Omega_K^{4\ell}},
\end{align*}
where 
$\Omega_p\in\unr^\times$ and $\Omega_K\in\bC^\times$  are CM periods. 
Moreover, if $\bar{\rho}_f$ is absolutely irreducible, then
$\mathscr{L}_{\pp}^{\tt BDP}(f/K)$ does not vanish identically.
\end{thm}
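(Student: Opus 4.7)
The plan is to construct $\mathscr{L}^{\tt BDP}_{\pp}(f/K)$ as a $p$-adic measure built from evaluating (derivatives of) $f$ at CM points on the Shimura curve $X_{N^+,N^-}$, then verify the interpolation, and finally establish nonvanishing via a mod-$p$ argument.

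First, under hypothesis $({\rm Heeg})$ the newform $f$ transfers via Jacquet--Langlands to an automorphic form on the indefinite quaternion algebra of discriminant $N^-$, and may be realized as a global section of $\omega^{\otimes 2}$ on $X_{N^+,N^-}$. Fix a CM point $P_0$ coming from a false elliptic curve with CM by $\mathcal{O}_K$ and appropriate $(N^+,N^-)$-level structure; since $p=\pp\overline{\pp}$ splits in $K$, the reduction of $P_0$ is ordinary. By Serre--Tate theory the formal neighborhood of the reduction is canonically a formal torus over $R_0$, and the Gauss--Manin connection combined with the unit-root splitting supplies a $p$-adic Atkin--Serre derivation $\theta$ raising the weight by $2$. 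Taking the $\Gamma^{\rm ac}$-orbit of $P_0$ (and passing to the limit over ring class fields of $p$-power conductor) produces an $R_0$-valued measure on $\Gamma^{\rm ac}$ whose pairing against a continuous function $\phi$ is, roughly, the limit of averages of $\phi(\sigma)\,\theta^{\ell-1}f(P_0^\sigma)$; the resulting element of $R_0[[\Gamma^{\rm ac}]]$ is $\mathscr{L}^{\tt BDP}_{\pp}(f/K)$.

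To verify the interpolation formula for $\psi$ anticyclotomic of infinity type $(-\ell,\ell)$ with $\ell\geqslant 1$, I would invoke Katz's comparison between algebraic nearly holomorphic forms and their $p$-adic avatars at CM points. That comparison converts $\theta^{\ell-1}f(P_0^\sigma)$ into an $R_0$-multiple of $\Omega_p^{2\ell}$ times the value of a classical nearly holomorphic form at a complex CM point; squaring and applying an explicit Waldspurger-type formula for the Rankin--Selberg central value (in the style of Yuan--Zhang--Zhang) identifies the square with $L(f/K,\psi,1)$ up to the period $\Omega_K^{4\ell}$ and the archimedean gamma factors $\Gamma(\ell)\Gamma(\ell+1)$. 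The squared Euler factors $(1-p^{-1}\psi(\pp)\alpha)^2(1-p^{-1}\psi(\pp)\beta)^2$ appear because $\theta^{\ell-1}f$ agrees on the ordinary locus with the $p$-depletion at $\pp$ of the relevant classical nearly holomorphic form: each factor of $\mathscr{L}^{\tt BDP}_{\pp}(f/K)(\hat\psi)$ contributes one copy of $(1-p^{-1}\psi(\pp)\alpha)(1-p^{-1}\psi(\pp)\beta)$, and squaring doubles this.

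The hard part is the nonvanishing assertion. Since the interpolation region consists of characters of infinity type $(-\ell,\ell)$, classical nonvanishing theorems à la Rohrlich are unavailable because the Euler product for $L(f/K,\psi,s)$ does not converge at $s=1$ there. The natural route is a mod $p$ nonvanishing argument: assuming $\bar\rho_f$ is absolutely irreducible, a Hsieh-type argument based on Chai's theorem on the Zariski closure of the Galois orbit of $P_0$ in the Serre--Tate deformation space shows that the reduction of $\mathscr{L}^{\tt BDP}_{\pp}(f/K)$ modulo the maximal ideal of $R_0[[\Gamma^{\rm ac}]]$ is not identically zero, forcing nonvanishing of the element itself. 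The main technical obstacle is transposing this argument from the modular-curve setting to the Shimura curve $X_{N^+,N^-}$, where false elliptic curves replace elliptic curves throughout; under the running hypotheses this adaptation is compatible and, once set up, ultimately routine.
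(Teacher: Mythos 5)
Your proposal is correct and follows the same route as the references the paper cites for this result (Castella--Hsieh for the construction and the nontriviality via Hsieh's $\mu$-invariant argument, Burungale for the $N^-\neq 1$ quaternionic case): a Serre--Tate/Katz-type measure built from applying the $p$-adic Maass--Shimura operator to the $p$-depleted quaternionic form at ordinary CM points, with interpolation verified against an explicit Waldspurger formula and nonvanishing reduced mod $p$ via Chai's density theorem. The paper's own ``proof'' is merely a citation, so your reconstruction of the underlying argument is the expected content; the only quibble is that Rohrlich-type nonvanishing fails here not merely because the Euler product diverges but because the anticyclotomic characters of infinity type $(-\ell,\ell)$ sit at the center of the functional equation, which is precisely why the mod-$p$ density argument is needed.
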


\begin{proof}
See \cite[\S{3.3}]{cas-hsieh1} for the construction of
$\mathscr{L}_{\pp}^{\tt BDP}(f/K)$ (see also \cite[\S{5.2}]{burungale-II} for the case $N^-\neq 1$),
and \cite[Thm.~3.9]{cas-hsieh1} for the proof of its nontriviality.
\end{proof}



Note that the projection $L_p^\ac(f/K,\Sigma^{(2)})$ and the square of the $p$-adic $L$-function
$\mathscr{L}^{\tt BDP}_{\pp}(f/K)$ are defined by the interpolation of the same $L$-values.
However, the archimedean periods used in their normalization are different, and therefore
these $p$-adic $L$-functions need not be equal.
In fact, as we shall see below, 
the ratio between these different periods is interpolated by an anticyclotomic projection of a
Katz $p$-adic $L$-function.

Recall that the Hecke $L$-function of $\psi\in\Xi_K$ is defined by
(the analytic continuation of) the Euler product
\[
L_{}(\psi,s)=\prod_{\frakl}\biggl(1-\frac{\psi(\frakl)}{N(\frakl)^s}\biggr)^{-1},
\]
where 
$\frakl$ runs over all prime ideals of $K$, with the convention
that $\psi(\frakl)=0$ for $\frakl\mid\frakf_\psi$. The set of infinity types
of $\psi\in\Xi_K$ for which $s=0$ is a critical value of $L(\psi,s)$ can be written as the disjoint union
$\Sigma_K\sqcup\Sigma_K'$, where $\Sigma_K=\{(\ell_1,\ell_2):0<\ell_1\leqslant\ell_2\}$ and
$\Sigma_K=\{(\ell_1,\ell_2):0<\ell_2\leqslant\ell_1\}$.

\begin{thm}[Katz]\label{thm:katz}
Assume that $p=\pp\overline{\pp}$ splits in $K$. Then there exists a $p$-adic $L$-function
$\mathscr{L}_{\pp}^{}(K)\in\unr[[H_{p^\infty}]]$
such that if $\psi\in\Xi_K$ has trivial conductor 
and infinity type $(\ell_1,\ell_2)\in\Sigma_K$, then
\[
\mathscr{L}^{}_{\pp}(K)(\hat\psi)=\biggl(\frac{\sqrt{D_K}}{2\pi}\biggr)^{\ell_1}
\cdot\Gamma(\ell_2)\cdot(1-\psi(\pp))\cdot(1-p^{-1}\psi^{-1}(\overline\pp))
\cdot\frac{\Omega_p^{\ell_2-\ell_1}}{\Omega_K^{\ell_2-\ell_1}}\cdot L_{}(\psi,0),
\]
where $\Omega_K$ and $\Omega_p$ are as in Theorem~\ref{thm:bdp}. Moreover,
it satisfies the functional equation
\[
\mathscr{L}^{}_\pp(K)(\hat\psi^\rho)=\mathscr{L}^{}_\pp(K)(\hat\psi^{-1}{\rm \mathbf{N}}_K).
\]
\end{thm}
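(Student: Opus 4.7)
The plan is to follow Katz's original construction of this two-variable $p$-adic $L$-function, which is now classical. The starting point is Damerell's formula, which expresses the critical values $L(\psi,0)/\Omega_K^{\ell_2-\ell_1}$ for $\psi$ of infinity type $(\ell_1,\ell_2)\in\Sigma_K$ as algebraic numbers of the form $E_\psi(\tau_K)$, where $E_\psi$ is a suitable nearly holomorphic Eisenstein series of weight $\ell_2-\ell_1$ on a modular curve of level prime to $p$, and $\tau_K$ is the CM point attached to $\bC/\cO_K$. The transcendental period $\Omega_K$ arises as the ratio between the algebraic and analytic trivialisations of the cotangent space of this CM elliptic curve.

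Next I would reinterpret these CM values $p$-adically. Under (\ref{eq:spl}), the curve $\bC/\cO_K$ has ordinary reduction at $p$ and admits a Serre--Tate canonical lift to $\unr$, equipped with a canonical differential; by definition, $\Omega_p$ is the ratio of this $p$-adic trivialisation to the complex one, and this accounts for the factor $(\Omega_p/\Omega_K)^{\ell_2-\ell_1}$ in the interpolation formula. To translate Eisenstein series into the $p$-adic setting one replaces the Maass--Shimura operator by the Atkin--Serre $\theta$-operator, and then $p$-stabilises the Eisenstein series at $\pp$ and $\overline\pp$ to land in the space of $p$-adic modular forms on the Igusa tower; this $p$-stabilisation is precisely the source of the Euler factors $(1-\psi(\pp))$ and $(1-p^{-1}\psi^{-1}(\overline\pp))$, and the factor $(\sqrt{D_K}/2\pi)^{\ell_1}\cdot\Gamma(\ell_2)$ comes out of the normalisation of the nearly holomorphic Eisenstein series.

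The principal step is then to show that these $p$-stabilised forms vary $p$-adically continuously as $\psi$ ranges over Hecke characters of infinity type in $\Sigma_K$, so that their values at $\tau_K$ assemble into a single element of $\unr[[H_{p^\infty}]]$ satisfying the displayed interpolation identity. This is the main technical obstacle and rests on Katz's $q$-expansion principle on the Igusa tower, together with the fact that the $p$-adic avatars $\hat\psi$ corresponding to characters $\psi\in\Sigma_K$ of trivial conductor are $p$-adically dense among continuous characters of $H_{p^\infty}$. Finally, the functional equation $\mathscr{L}^{}_\pp(K)(\hat\psi^\rho)=\mathscr{L}^{}_\pp(K)(\hat\psi^{-1}{\rm \mathbf{N}}_K)$ follows, by this same density, from the complex functional equation of $L(\psi,s)$ once one matches Euler factors and periods on both sides and uses that $\psi\mapsto\psi^{-1}{\rm \mathbf{N}}_K$ interchanges the regions $\Sigma_K$ and $\Sigma_K'$. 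In practice the paper would simply cite Katz's original work, with subsequent refinements (notably by Hida--Tilouine and de Shalit) supplying the $\unr$-integrality and the precise form of the Euler factors stated.
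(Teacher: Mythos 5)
Your sketch is a faithful summary of Katz's construction (Damerell's formula, Serre--Tate canonical lift and $\Omega_p$, Atkin--Serre $\theta$-operator and $p$-stabilisation, $q$-expansion principle and density of CM avatars), which is exactly the content of the references the paper invokes. The paper's own proof is simply a citation to Katz \cite[\S{5.3.0}, \S{5.3.7}]{Katz49} and de Shalit \cite[Thm.~II.4.14, Thm.~II.6.4]{de_shalit}, so you have anticipated the paper's approach precisely, only spelling out what is cited.
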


\begin{proof}
See \cite[\S{5.3.0}]{Katz49} or \cite[Thm.~II.4.14]{de_shalit} for the construction,
and \cite[\S{5.3.7}]{Katz49} or \cite[Thm.~II.6.4]{de_shalit} for the functional equation.
\end{proof}

Let $\mathscr{L}_\pp^\ac(K)$ be the image of the Katz $p$-adic $L$-function
$\mathscr{L}^{}_\pp(K)$ under the natural projection $\unr[[H_{p^\infty}]]\rightarrow\unr[[\Gamma^\ac]]$.

\begin{thm}\label{thm:factorization}
Assume that $p=\pp\overline{\pp}$ splits in $K$ and that hypothesis {\rm (Heeg)} hold. Then
\[
L_p^\ac(f/K,\Sigma^{(2)})(\hat\psi)
=\frac{w_K}{h_K}\cdot\frac{\mathscr{L}_{\pp}^{\tt BDP}(f/K)^2(\hat\psi)}{\mathscr{L}_\pp^\ac(K)(\hat\psi^{\rho-1})}
\]
up to a unit in $\ro[[\Gamma^\ac]]^\times$,
where $w_K=\vert\cO_K^\times\vert$ and $h_K$ is the class number of $K$.
In particular, if $\bar{\rho}_f$ is absolutely irreducible, then
$L_p^\ac(f/K,\Sigma^{(2)})$ does not vanish identically.
\end{thm}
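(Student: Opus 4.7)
The plan is a standard Zariski-density comparison of interpolation formulas. Both sides of the claimed identity lie in ${\rm Frac}(\unr[[\Gamma^\ac]])$, so it suffices to prove equality (up to a unit) after evaluating at a Zariski-dense set of continuous characters $\hat\psi\colon\Gamma^\ac\to\bC_p^\times$. The natural choice is the $p$-adic avatars of anticyclotomic algebraic Hecke characters $\psi$ of infinity type $(-\ell,\ell)$ with $\ell\geqslant 1$, since these lie simultaneously in the interpolation range of $L_p(f/K,\Sigma^{(2)})$ (via Theorem~\ref{thm:hida}(i)) and of $\mathscr{L}_\pp^{\tt BDP}(f/K)$ (via Theorem~\ref{thm:bdp}), and since their $p$-adic avatars are dense in ${\rm Spec}(\unr[[\Gamma^\ac]])$.

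At such a $\hat\psi$, I would write out both sides using the respective interpolation formulas. Theorem~\ref{thm:hida}(i) gives $L_p^\ac(f/K,\Sigma^{(2)})(\hat\psi)$ in terms of $L(f/K,\psi,1)/\langle\theta_{\psi_\ell},\theta_{\psi_\ell}\rangle_N$ (where $\psi_\ell=\psi\cdot\mathbf{N}_K^\ell$ has infinity type $(-2\ell,0)$), Euler factors $\mathcal{E}(\psi,f)$, and Euler-like factors of the form $(1-\psi^{1-\rho}(\pp))$. Squaring Theorem~\ref{thm:bdp}, the numerator of the RHS evaluates to a multiple of $L(f/K,\psi,1)/\Omega_K^{4\ell}$, times $\Omega_p^{4\ell}$ and squared local factors $(1-p^{-1}\psi(\pp)\alpha)^2(1-p^{-1}\psi(\pp)\beta)^2$. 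For the denominator, the character $\psi^{\rho-1}$ has infinity type $(2\ell,-2\ell)$ and lies outside the standard range of Theorem~\ref{thm:katz}, so the Katz functional equation must first be invoked to transform $\mathscr{L}_\pp^\ac(K)(\hat\psi^{\rho-1})$ into a value in the admissible region, producing an expression involving $(\Omega_p/\Omega_K)^{4\ell}$, an explicit Hecke $L$-value for $K$, and the Euler factors at $\pp$ prescribed by the theorem.

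The $L(f/K,\psi,1)$, the $\Omega_p^{4\ell}$, and the Euler factors at $\pp$ on both sides should cancel against each other, reducing the target identity to a formula of the shape
\[
\langle\theta_{\psi_\ell},\theta_{\psi_\ell}\rangle_N \;\sim\; \frac{h_K}{w_K}\cdot\Omega_K^{4\ell}\cdot L(\psi^{\rho-1},0)\cdot(\text{local correction factors})
\]
up to units in $\ro[[\Gamma^\ac]]^\times$. The key step — and the one I expect to be the main obstacle — is establishing this factorization of the Petersson norm of the CM theta series $\theta_{\psi_\ell}$. This should come from the classical Rankin--Selberg computation for self-convolutions of CM forms: upon base change to $K$, the Rankin $L$-function $L(\theta_{\psi_\ell}\otimes\overline{\theta_{\psi_\ell}},s)$ factors as a product of $\zeta_K(s)$ and the Hecke $L$-function $L(\psi_\ell^{1-\rho},s)$, the former contributing the $w_K/h_K$ constant via the class number formula, the latter matching (after unfolding $\psi_\ell^{1-\rho}$ back to $\psi^{1-\rho}$ times a norm twist) the Hecke $L$-value produced by the Katz interpolation. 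A careful bookkeeping of Euler factors at primes dividing $N^-$ and at the archimedean place (involving powers of $\pi$ and the weight $2\ell+1$) is the delicate part of the argument; it must be shown that any discrepancy is absorbed in a unit of $\ro[[\Gamma^\ac]]$.

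Once the factorization is proved, the nonvanishing assertion follows at once: under the hypothesis that $\bar\rho_f$ is absolutely irreducible, Theorem~\ref{thm:bdp} gives $\mathscr{L}_\pp^{\tt BDP}(f/K)\not\equiv 0$, while $\mathscr{L}_\pp^\ac(K)$ is a nonzero element of $\unr[[\Gamma^\ac]]$ (as follows from Rohrlich-type nonvanishing of Hecke $L$-values, already invoked in Remark~\ref{rem:non0}), hence a nonzerodivisor. Thus the RHS, and therefore $L_p^\ac(f/K,\Sigma^{(2)})$, is nonzero in ${\rm Frac}(\unr[[\Gamma^\ac]])$.
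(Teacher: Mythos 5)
Your proposal is correct and tracks the paper's own proof closely: evaluate both sides on anticyclotomic characters of infinity type $(-\ell,\ell)$, observe that the Euler-like factor $\mathcal{E}(\psi,f)$ becomes a square, invoke the Katz functional equation to move $\hat\psi^{\rho-1}$ into the admissible range, and reduce everything to a factorization of the Petersson norm $\langle\theta_{\psi_\ell},\theta_{\psi_\ell}\rangle$. The key lemma you isolate is precisely the paper's Lemma~\ref{lem:3.7}, which the paper packages via Hida's formula relating $\langle\theta_{\psi_\ell},\theta_{\psi_\ell}\rangle$ to the adjoint $L$-value and the factorization $L(\mathrm{Ad}^0\theta_{\psi_\ell},s)=L(\varepsilon_K,s-2\ell)L(\psi_\ell^{1-\rho},s)$, with $L(\varepsilon_K,1)$ and Dirichlet's class number formula contributing $h_K/w_K$ — equivalent to your Rankin--Selberg unfolding producing $\zeta_K\cdot L(\psi_\ell^{1-\rho})$, just stated without the $\zeta$-pole.
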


\begin{proof}
In the following, for any two $\bC_p$-valued functions $f_1$ and $f_2$ defined on the characters of $\Gamma^\ac$,
we shall write $f_1\sim f_2$ to indicate that their ratio is interpolated by
an invertible Iwasawa function in $\ro[[\Gamma^\ac]]^\times$. We first note that,
when restricted to anticyclotomic characters, the Euler-like factor $\mathcal{E}(\psi,f)$
in Theorem~\ref{thm:hida} becomes a square; in fact, if $\hat\psi:\Gamma^\ac\rightarrow\bC_p^\times$
has trivial conductor and infinity type $(-\ell,\ell)$ with $\ell\geqslant 1$, we see that
\begin{equation}\label{eq:Hida}
\begin{split}
L_p(f/K,\Sigma^{(2)})(\hat\psi)\;\sim&\;
\Gamma(\ell)\Gamma(\ell+1)\cdot\frac{(1-p^{-1}\psi(\pp)\alpha)^2\cdot(1-p^{-1}\psi(\pp)\beta)^2}
{(1-\psi^{1-\rho}(\pp))\cdot(1-p^{-1}\psi^{1-\rho}(\pp))}\\
&\times\frac{L(f/K,\psi,1)}{\pi^{2\ell+1}\cdot
\langle\theta_{\psi_{\ell}},\theta_{\psi_{\ell}}\rangle_M}.
\end{split}
\end{equation}

We will have use for the following result.

\begin{lem}\label{lem:3.7}
We have
\[
\pi^{2\ell+1}\cdot\langle\theta_{\psi_\ell},\theta_{\psi_\ell}\rangle_M
\;\sim\;\frac{h_K}{w_K}\cdot\Gamma(2\ell+1)\cdot L(\psi_\ell^{1-\rho},2\ell+1).
\]
\end{lem}

\begin{proof}
Denote by $\varepsilon_K$ the quadratic character associated with $K$.
Since $\psi_\ell=\psi{\rm \mathbf{N}}_K^\ell$ has infinity type $(-2\ell,0)$,
the result follows immediately after setting $s=2\ell+1$ in the factorization
\[
L({\rm Ad}^0(\theta_{\psi_\ell}),s)=L(\varepsilon_K,s-2\ell)\cdot L(\psi_\ell^{1-\rho},s),
\]
and using \cite[Thm.~5.1]{hida81} and Dirichlet's class number formula (see \cite[Lem.~3.7]{DLR}).
\end{proof}

Continuing with the proof of Theorem~\ref{thm:factorization},
we see that $L(\psi_\ell^{1-\rho},2\ell+1)=L_{}(\psi^{1-\rho},1)$
is interpolated by the value $\mathscr{L}_\pp(K)(\hat{\psi}^{1-\rho}{\rm\mathbf{N}}_K)$
of the Katz $p$-adic $L$-function. Using the functional equation of $\mathscr{L}_{\pp}(K)$, and
combining $(\ref{eq:Hida})$ and Lemma~\ref{lem:3.7} with the interpolation property
in Theorem~\ref{thm:katz}, we thus obtain
\begin{align*}
L_{p}(f/K,\Sigma^{(2)})(\hat\psi)\cdot\mathscr{L}_{\pp}(K)(\hat\psi^{\rho-1})\cdot\frac{h_K}{w_K}\;
\sim&\;\frac{\Gamma(\ell)\Gamma(\ell+1)}{\pi^{2\ell+1}}\cdot(1-p^{-1}\psi(\pp)\alpha)^2\cdot(1-p^{-1}\psi(\pp)\beta)^2\\
&\times\frac{\Omega_p^{4\ell}}{\Omega_K^{4\ell}}\cdot L(f/K,\psi,1),
\end{align*}
which compared with the interpolation property 
in Theorem~\ref{thm:bdp} yields the result. 
\end{proof}

\subsection{Hida's $p$-adic Rankin $L$-series, II}\label{subsec:2varL-II}

Recall from $\S\ref{sec:anti-L}$ the decomposition $H_{p^\infty}\simeq\Delta\times\Gamma_K$, set
$\Lambda=\ro[[\Gamma_K]]$ and $\Lambda_{\unr}=\unr[[\Gamma_K]]$,
and continue to denote by
\[
L_{p}(f/K,\Sigma^{(2)})\in{\rm Frac}(\Lambda)\quad\quad\textrm{and}\quad\quad
\mathscr{L}_{\pp}(K)\in\Lambda_{\unr}
\]
the natural projections of the 
$p$-adic $L$-functions $L_p(f/K,\Sigma^{(2)})$ and $\mathscr{L}_\pp(K)$ of
Theorem~\ref{thm:hida} and Theorem~\ref{thm:katz}, respectively.

\begin{thm}\label{prop:wan}
Assume that $p=\pp\overline{\pp}$ splits in $K$. There exists a $p$-adic $L$-function
$\mathscr{L}_{\pp}(f/K)\in{\rm Frac}(\Lambda_{\unr})$
such that if $\hat{\psi}:\Gamma\rightarrow\bC_p^\times$ has trivial conductor and
infinity type $(\ell_1,\ell_2)\in\Sigma^{(2)}$, then
\begin{align*}
\mathscr{L}_{\pp}(f/K)(\hat\psi)=
&\frac{\Gamma(\ell_2)\Gamma(\ell_2+1)}{\pi^{2\ell_2+1}}
\cdot\mathcal{E}(\psi,f)
\cdot\frac{\Omega_p^{2(\ell_2-\ell_1)}}{\Omega_K^{2(\ell_2-\ell_1)}}\cdot L(f/K,\psi,1),
\end{align*}
where
\[
\mathcal{E}(\psi,f)=(1-p^{-1}\psi(\pp)\alpha)(1-p^{-1}\psi(\pp)\beta)
(1-\psi^{-1}(\overline\pp)\alpha^{-1})(1-\psi^{-1}(\overline\pp)\beta^{-1}),
\]
and $\Omega_K$ and $\Omega_p$ are as in Theorem~\ref{thm:bdp}. Moreover,
it differs from the product
\[
L_{p}(f/K,\Sigma^{(2)})(\hat\psi)
\cdot\frac{h_K}{w_K}\cdot\mathscr{L}_\pp^\ac(K)(\hat\psi^{\rho-1})
\]
by a unit in $\Lambda_{\unr}^\times$.
\end{thm}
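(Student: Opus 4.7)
The strategy is to mimic the proof of Theorem~\ref{thm:factorization}, working this time over the full two-variable algebra $\Lambda_{\unr}$ rather than its anticyclotomic quotient. The key structural observation is that for any $\hat\psi:\Gamma_K\to\bC_p^\times$, the character $\hat\psi^{\rho-1}$ factors through the anticyclotomic quotient $\Gamma^\ac$, since $\rho-1$ annihilates $\Gamma^\cyc$. Hence the continuous map $\iota:\Gamma_K\to\Gamma^\ac$ given by $\gamma\mapsto\rho(\gamma)\gamma^{-1}$ induces a pullback $\iota^\ast:\unr[[\Gamma^\ac]]\to\Lambda_{\unr}$, and I would simply \emph{define}
\[
\mathscr{L}_\pp(f/K):=\frac{h_K}{w_K}\cdot L_p(f/K,\Sigma^{(2)})\cdot\iota^\ast\mathscr{L}_\pp^\ac(K)\in{\rm Frac}(\Lambda_{\unr}).
\]
With this definition, the second assertion of the theorem (agreement with the stated product up to a unit in $\Lambda_{\unr}^\times$) is built in, and it remains only to verify the displayed interpolation formula.

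To check interpolation at a character $\hat\psi$ of infinity type $(\ell_1,\ell_2)\in\Sigma^{(2)}$ and trivial conductor, I would evaluate each factor in turn. Hida's formula (Theorem~\ref{thm:hida}(i)) expresses $L_p(f/K,\Sigma^{(2)})(\hat\psi)$ in terms of $L(f/K,\psi,1)$, the Euler factor $\mathcal{E}(\psi,f)$, the auxiliary factor $(1-\psi^{1-\rho}(\pp))(1-p^{-1}\psi^{1-\rho}(\pp))$, and the Petersson norm $\langle\theta_{\psi_{\ell_2}},\theta_{\psi_{\ell_2}}\rangle_N$ of the weight-$(\ell_2-\ell_1+1)$ CM newform $\theta_{\psi_{\ell_2}}$. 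Next, extending Lemma~\ref{lem:3.7} from the anticyclotomic region to arbitrary $(\ell_1,\ell_2)\in\Sigma^{(2)}$—via the standard Petersson-norm to adjoint $L$-value formula, the Artin factorization $L({\rm Ad}^0\theta_{\psi_{\ell_2}},s)=L(\varepsilon_K,s-(\ell_2-\ell_1))\cdot L(\psi^{1-\rho},s)$ evaluated at the edge $s=\ell_2-\ell_1+1$, and Dirichlet's class number formula applied to $L(\varepsilon_K,1)$—one expresses $\pi^{\ell_2-\ell_1+1}\cdot\langle\theta_{\psi_{\ell_2}},\theta_{\psi_{\ell_2}}\rangle_N$, up to a unit in $\Lambda_{\unr}^\times$, in terms of $\tfrac{h_K}{w_K}\cdot\Gamma(\ell_2-\ell_1+1)\cdot L(\psi^{1-\rho},\ell_2-\ell_1+1)$. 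Finally, the functional equation of the Katz $p$-adic $L$-function (Theorem~\ref{thm:katz}) identifies $L(\psi^{1-\rho},\ell_2-\ell_1+1)$, up to the Euler factor $(1-\psi^{1-\rho}(\pp))(1-p^{-1}\psi^{1-\rho}(\pp))$ and the period ratio $\Omega_K^{2(\ell_2-\ell_1)}/\Omega_p^{2(\ell_2-\ell_1)}$, with the value $\iota^\ast\mathscr{L}_\pp^\ac(K)(\hat\psi)=\mathscr{L}_\pp^\ac(K)(\hat\psi^{\rho-1})$. Collecting these identities, the auxiliary Euler factors cancel exactly between Hida's and Katz's formulas, the archimedean and $p$-adic periods combine into the required ratio $\Omega_p^{2(\ell_2-\ell_1)}/\Omega_K^{2(\ell_2-\ell_1)}$, and the target interpolation formula emerges.

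The main obstacle is the combinatorial bookkeeping: confirming that all $\Gamma$-factors, powers of $\pi$ and $2$, Euler factors at $\pp$, and complex versus $p$-adic periods assemble correctly. In particular, one must check that the shift required by Katz's functional equation to bring $L(\psi^{1-\rho},\ell_2-\ell_1+1)$ into the range of interpolation $\Sigma_K$ lands precisely on the character $\hat\psi^{\rho-1}$ specified in the statement, and that the Euler factors at $\pp$ produced by Katz align so as to cancel exactly those appearing in Hida's formula, with no residual interpolation factor left behind beyond what is absorbed in the ambient unit in $\Lambda_{\unr}^\times$.
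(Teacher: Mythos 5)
Your proposal is correct and follows the same route as the paper, which disposes of Theorem~\ref{prop:wan} by citing ``the same calculation as in the proof of Theorem~\ref{thm:factorization}'': one rewrites the claimed factorization as the definition of $\mathscr{L}_\pp(f/K)$ and then unwinds Hida's interpolation, the Petersson-norm identity of Lemma~\ref{lem:3.7} extended over all of $\Sigma^{(2)}$, and Katz's interpolation plus functional equation to recover the displayed interpolation formula. One small correction: a group homomorphism $\iota:\Gamma_K\to\Gamma^\ac$ induces a map $\unr[[\Gamma_K]]\to\unr[[\Gamma^\ac]]$, not one in the direction you want; the map $\unr[[\Gamma^\ac]]\to\Lambda_{\unr}$ realizing $F\mapsto\bigl(\hat\psi\mapsto F(\hat\psi^{\rho-1})\bigr)$ is instead the covariant map induced by the homomorphism $\Gamma^\ac\to\Gamma_K$, $\gamma'\mapsto\rho(\tilde\gamma')\tilde\gamma'^{-1}$ for any lift $\tilde\gamma'\in\Gamma_K$ (well-defined since $\rho$ fixes $\Gamma^\cyc$), which is the element you use from that point on anyway.
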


\begin{proof}
This follows from exactly the same calculation as
in the proof of Theorem~\ref{thm:factorization}.
\end{proof}

\begin{cor}\label{cor:wan-bdp}
Assume that $p=\pp\overline{\pp}$ splits in $K$ and 
hypothesis $({\rm Heeg})$ holds, and denote by $\mathscr{L}_\pp^\ac(f/K)$
the image of the $p$-adic $L$-function $\mathscr{L}_\pp(f/K)$
of Theorem~\ref{prop:wan} under the natural projection
$\Lambda_{\unr}\rightarrow\Lambda_{\unr}^\ac$. Then
\[
\mathscr{L}_\pp^\ac(f/K)=\mathscr{L}^{\tt BDP}_\pp(f/K)^2
\]
up to a unit in $(\Lambda_{\unr}^\ac)^\times$.
\end{cor}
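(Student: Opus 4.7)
The plan is to combine Theorem~\ref{prop:wan} with the anticyclotomic factorization of Theorem~\ref{thm:factorization}. Since Theorem~\ref{prop:wan} already establishes that $\mathscr{L}_\pp(f/K)$ equals the product $L_p(f/K,\Sigma^{(2)})\cdot\tfrac{h_K}{w_K}\cdot\mathscr{L}_\pp^\ac(K)(\hat\psi^{\rho-1})$ up to a unit in $\Lambda_\unr^\times$, the task is to push this identity onto the anticyclotomic line and then substitute the factorization.

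To carry this out, I would first observe that the twisted Katz factor $\mathscr{L}_\pp^\ac(K)(\hat\psi^{\rho-1})$ corresponds to an honest element of $\Lambda_\unr$ lying in the subring $\Lambda_\unr^\ac$: complex conjugation acts as $+1$ on $\Gamma^\cyc$ and $-1$ on $\Gamma^\ac$, so the character $\hat\psi^{\rho-1}$ factors through $\Gamma^\ac$ and equals $\hat\psi_a^{-2}$, where $\hat\psi_a$ denotes the restriction of $\hat\psi$ to $\Gamma^\ac$. Hence this factor is precisely $\iota(\mathscr{L}_\pp^\ac(K))$, where $\iota$ is the ring endomorphism of $\Lambda_\unr^\ac$ induced by $\gamma\mapsto\gamma^{-2}$ on $\Gamma^\ac$ (actually an automorphism, since for $p\geqslant 5$ the scalar $-2$ is a unit in $\bZ_p$). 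Projecting the identity of Theorem~\ref{prop:wan} under the natural map $\Lambda_\unr\twoheadrightarrow\Lambda_\unr^\ac$ then gives
\[
\mathscr{L}_\pp^\ac(f/K)\;=\;\bar u\cdot L_p^\ac(f/K,\Sigma^{(2)})\cdot\tfrac{h_K}{w_K}\cdot\iota(\mathscr{L}_\pp^\ac(K))
\]
for some $\bar u\in(\Lambda_\unr^\ac)^\times$. Plugging in the identity of Theorem~\ref{thm:factorization}, which under hypothesis (Heeg) can be rewritten as
\[
L_p^\ac(f/K,\Sigma^{(2)})\cdot\iota(\mathscr{L}_\pp^\ac(K))\;=\;v\cdot\tfrac{w_K}{h_K}\cdot\mathscr{L}_\pp^{\tt BDP}(f/K)^2
\]
for some unit $v\in\ro[[\Gamma^\ac]]^\times$, the factors $h_K/w_K$ and $\iota(\mathscr{L}_\pp^\ac(K))$ cancel out, producing $\mathscr{L}_\pp^\ac(f/K)=\bar u v\cdot\mathscr{L}_\pp^{\tt BDP}(f/K)^2$, as claimed.

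The only genuine point requiring justification is that the cancellation above is legitimate as an equality in $\Lambda_\unr^\ac$ rather than merely in its fraction field; equivalently, that $\iota(\mathscr{L}_\pp^\ac(K))$ is a non-zero-divisor in the integral domain $\Lambda_\unr^\ac\cong\unr[[T]]$. This follows from the nontriviality of the Katz $p$-adic $L$-function (apparent from the interpolation property in Theorem~\ref{thm:katz} together with the functional equation) and the fact that $\iota$ is a ring automorphism. This is essentially cosmetic, so the corollary is really a direct formal consequence of the two theorems it combines.
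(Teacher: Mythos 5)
Your argument is correct and follows the same route as the paper: combine the last claim of Theorem~\ref{prop:wan} with the factorization in Theorem~\ref{thm:factorization} and cancel the Katz factor. The paper's own proof is a single sentence pointing to exactly these two ingredients; you have simply made explicit the bookkeeping (the observation that $\hat\psi^{\rho-1}$ factors through $\Gamma^\ac$ as the square-inverse, packaged via the automorphism $\iota$, and the integral-domain cancellation) that the paper leaves to the reader.
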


\begin{proof}
This is clear from Theorem~\ref{thm:factorization}
and the last claim in Theorem~\ref{prop:wan}.
\end{proof}

\section{Iwasawa theory}\label{sec:Iw}

Throughout this section, we let $f\in S_{2}(\Gamma_0(N_f))$, $K/\bQ$, and
$p\geqslant 5$ be as in $\S\ref{sec:padicL}$. In particular, $p=\pp\overline{\pp}$
splits in $K$, and we assume in addition that $p$ is ordinary for $f$, i.e., $a_p(f)\in\bZ_p^\times$.
\sk

Let $V$ be a $\bQ_p$-vector space affording the Galois representation
$\rho_f:{\rm Gal}(\overline{\bQ}/\bQ)\rightarrow{\rm GL}_2(\bQ_p)$ attached to $f$,
fix a Galois-stable $\bZ_p$-lattice $T\subseteq V$, and set $A:=V/T$. By $p$-ordinarity,
there exists a ${\rm Gal}(\overline{\bQ}_p/\bQ_p)$-stable filtration
\begin{equation}\label{eq:Gr-f}
0\longrightarrow\mathscr{F}^+V\longrightarrow V\longrightarrow\mathscr{F}^-V\longrightarrow 0\nonumber
\end{equation}
with both $\mathscr{F}^+V$ and $\mathscr{F}^-V:=V/\mathscr{F}^+V$ one-dimensional over $\bQ_p$
and with the Galois action on $\mathscr{F}^-V$ being unramified. Set $\mathscr{F}^+T:=T\cap\mathscr{F}^+T$,
$\mathscr{F}^-T:=T/\mathscr{F}^+T$, $\mathscr{F}^+A:=\mathscr{F}^+V/\mathscr{F}^+T$, and
$\mathscr{F}^-A:=A/\mathscr{F}^+A$.

\subsection{Selmer groups}\label{sec:selmer}

Let $\Sigma$ be a finite set of places of $K$
containing the places where $V$ is ramified and the places dividing $p\infty$,
and for any finite extension $F$ of $K$, let $\mathfrak{G}_{F,\Sigma}$ denote
the Galois group of the maximal extension of $F$ unramified outside the places above $\Sigma$.

In the next two definitions, we let $M$ denote either $V$, $T$, or $A$.

\begin{defn}\label{def:Sel}
The \emph{Greenberg Selmer group} of $M$ over $F$ is
\begin{equation}
{\rm Sel}(F,M)={\rm ker}\Biggl\{H^1(\mathfrak{G}_{F,\Sigma},M)
\longrightarrow\prod_{v\in\Sigma}\frac{H^1(F_v,M)}{H^1_{\rm Gr}(F_v,M)}\Biggr\},\nonumber
\end{equation}
where
\[
H^1_{\rm Gr}(F_v,M)=
\left\{
\begin{array}{ll}
{\rm ker}\{H^1(F_v,M)\longrightarrow H^1(F_v^{\rm nr},\mathscr{F}^-M)\}&
\textrm{if $v\mid p$};
\\
{\rm ker}\{H^1(F_v,M)\longrightarrow H^1(F_v^{\rm nr},M\}&\textrm{else}.
\\
\end{array}
\right.
\]
\end{defn}

We will also have use for certain modified Selmer groups cut out by different
local conditions at the places above $p$.

\begin{defn}
For $v\mid p$ and $\mathscr{L}_v\in\{\emptyset,{\rm Gr},0\}$, set
\[
H^1_{\mathscr{L}_v}(F_v,M):=
\left\{
\begin{array}{ll}
H^1(F_v,M)&\textrm{if $\mathscr{L}_v=\emptyset$;}\\
H_{\rm Gr}^1(F_v,M)&\textrm{if $\mathscr{L}_v={\rm Gr}$;}\\
\{0\}&\textrm{if $\mathscr{L}_v=0$,}
\end{array}
\right.
\]
and for $\mathscr{L}=\{\mathscr{L}_v\}_{v\vert p}$, define
\begin{equation}\label{def:auxsel}
{\rm Sel}_{\mathscr{L}}(F,M):={\rm ker}\Biggr\{H^1(\mathfrak{G}_{F,\Sigma},M)
\longrightarrow\prod_{\substack{v\in\Sigma\\ v\nmid p}}\frac{H^1(F_v,M)}{H^1_{\rm Gr}(F_v,M)}\times\prod_{v\vert p}
\frac{H^1(F_{v},M)}{H_{\mathscr{L}_v}^1(F_{v},M)}\Biggr\}.\nonumber
\end{equation}
\end{defn}

Of course, when $\mathscr{L}$ is given by
$\mathscr{L}_v={\rm Gr}$ for all $v\mid p$ we recover the previous definition,
in which case $\mathscr{L}$ will be omitted from the notation.

\subsubsection*{Two-variable Selmer groups}

Recall that $\Gamma_K={\rm Gal}(K_\infty/K)$ denotes the Galois group of
the $\bZ_p^2$-extension of $K$, let $\Lambda=\ro[[\Gamma_K]]$ be the associated Iwasawa algebra,
and define the $\Lambda$-modules
\[
\mathbf{T}:=T\otimes_{\ro}\Lambda,
\quad\quad\mathbf{A}:=T\otimes_{\ro}\Lambda^*,
\]
where $\Lambda^*={\rm Hom}_{\bZ_p}(\Lambda,\bQ_p/\bZ_p)$ is the Pontrjagin dual of $\Lambda$.
We equip these modules with the $G_K$-action given by $\rho_f\otimes\can^{-1}$, where
$\can:G_K\twoheadrightarrow\Gamma_K\hookrightarrow\Lambda^\times$ is
the natural character.

Setting
\[
\mathscr{F}^\pm\mathbf{T}:=\mathscr{F}^\pm T\otimes_{\bZ_p}\Lambda,
\quad\quad\mathscr{F}^\pm\mathbf{A}:=\mathscr{F}^\pm T\otimes_{\bZ_p}\Lambda^*,
\]
the Selmer groups ${\rm Sel}_{\mathscr{L}}(K,\mathbf{T})$
and ${\rm Sel}_{\mathscr{L}}(K,\mathbf{A})$ are defined as before,
and by Shapiro's lemma we then have canonical $\Lambda$-module isomorphisms
\begin{equation}\label{eq:Shapiro}
{\rm Sel}_{\mathscr{L}}(K,\mathbf{T})\simeq\varprojlim_{K\subseteq F\subseteq K_\infty}
{\rm Sel}_{\mathscr{L}}(F,T),\quad\quad
{\rm Sel}_{\mathscr{L}}(K,\mathbf{A})\simeq\varinjlim_{K\subseteq F\subseteq K_\infty}
{\rm Sel}_{\mathscr{L}}(F,A),
\end{equation}
where the limits are with respect to the corestriction and restriction map,
respectively, as $F$ runs over the finite extensions of $K$ contained in $K_\infty$
(see e.g. \cite[Prop.~3.4]{SU}). Finally, set 
\[
X_{\mathscr{L}}(K,\mathbf{A}):={\rm Hom}_{\bZ_p}({\rm Sel}_{\mathscr{L}}(K,\mathbf{A}),\bQ_p/\bZ_p),
\]
omitting $\mathscr{L}$ from the notation in the same case as before.

\subsubsection*{Anticyclotomic Selmer groups}

For the ease of notation, set
\[
\Lambda^\ac=\ro[[\Gamma^\ac]].
\]
The Selmer groups ${\rm Sel}_{\mathscr{L}}(K,\Tc)$ and $X_{\mathscr{L}}(K,\Ac)$
are defined by replacing $\Lambda$ with $\Lambda^\ac$ in the above definitions.
For any $\Lambda^\ac$-module $M$, let $M^\iota$ be the underlying abelian group $M$ with
the original $\Lambda^\ac$-module structure composed with the involution
$\iota:\Lambda^\ac\rightarrow\Lambda^\ac$ given by inversion on group-like elements, and
let $M_{\rm tors}$ denote the $\Lambda^\ac$-torsion submodule of $M$. Also, let
$M_{\rm div}$ be the maximal divisible submodule of $M$.

\begin{lem}\label{lem:str-rel}
\hfill
\begin{enumerate}
\item{} ${\rm rank}_{\Lambda^\ac}({\rm Sel}_{\Gr}(K,\Tc))
={\rm rank}_{\Lambda^\ac}(X_{\Gr}(K,\Ac))$.
\item{} ${\rm rank}_{\Lambda^\ac}(X_{{\rm Gr},\emptyset}(K,\Ac))
=1+{\rm rank}_{\Lambda^\ac}(X_{{\rm Gr},0}(K,\Ac))$.
\item{} $Ch_{\Lambda^\ac}(X_{{\rm Gr},\emptyset}(K,\Ac)_{\rm tors})
=Ch_{\Lambda^\ac}(X_{{\rm Gr},0}(K,\Ac)_{\rm tors})$.
\end{enumerate}
\end{lem}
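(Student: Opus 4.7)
The plan is to apply Poitou--Tate global duality together with the local and global Euler characteristic formulas over the anticyclotomic Iwasawa algebra $\Lambda^\ac$, varying the local conditions at $\pp$. For (1), which is the standard balance formula for self-dual Greenberg Selmer groups, I would identify the Cartier dual $(\Ac)^*$ with $\Tc(1)^\iota$ via the self-duality of $T_pE$ under the Weil pairing (the cyclotomic twist acting trivially on $\Gamma^\ac$), and observe that under local Tate duality the Greenberg condition $H^1_\Gr(K_v,-)$ at each $v \mid p$ is its own orthogonal complement. The local correction terms in the $\Lambda^\ac$-adic Poitou--Tate formula then cancel (archimedean contributions vanish since $K$ is totally imaginary), giving the asserted equality of $\Lambda^\ac$-ranks.

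For (2) and (3), the plan is to introduce the intermediate Selmer group ${\rm Sel}_{{\rm Gr},{\rm Gr}}(K, \Ac)$ and to write $m$ for the common $\Lambda^\ac$-rank of $X_{{\rm Gr},{\rm Gr}}(K, \Ac)$ and ${\rm Sel}_{{\rm Gr},{\rm Gr}}(K, \Tc)$ provided by (1). From the chain
\[
{\rm Sel}_{{\rm Gr},0}(K, \Ac) \subseteq {\rm Sel}_{{\rm Gr},{\rm Gr}}(K, \Ac) \subseteq {\rm Sel}_{{\rm Gr},\emptyset}(K, \Ac),
\]
the successive quotients inject into $H^1_\Gr(K_\pp, \Ac)$ and into $H^1(K_\pp, \Ac)/H^1_\Gr(K_\pp, \Ac)$ respectively; each of these local modules has $\Lambda^\ac$-corank $1$ by the local Euler characteristic formula (using $[K_\pp:\bQ_p] = 1$ and that each $\mathscr{F}^\pm\Ac$ has $\Lambda^\ac$-corank $1$). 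Poitou--Tate global duality then forces these injections to be surjections up to pseudo-null $\Lambda^\ac$-modules, by pairing with the corresponding localization maps on the $\Tc$-side (which are orthogonal complements under local Tate duality). Reading off $\Lambda^\ac$-ranks gives ${\rm rank}\,X_{{\rm Gr},\emptyset} = m + 1$ and ${\rm rank}\,X_{{\rm Gr},0} = m$, proving (2). For (3), multiplicativity of characteristic ideals in the resulting short exact sequences of torsion $\Lambda^\ac$-modules produces the equality, since the intermediate torsion contributions cancel thanks to the self-dual setup (and pseudo-null modules are invisible to characteristic ideals).

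The main obstacle will be establishing that the relevant localization maps have pseudo-null cokernels, which is what upgrades the rank identity in (2) to the equality of characteristic ideals in (3). Careful control over the images of the Selmer groups under localization at $\pp$ on both the $\Ac$ and $\Tc$ sides will be required, together with confirming that the finite or torsion error terms in the Iwasawa-theoretic Poitou--Tate sequences do not disrupt the equality of characteristic ideals.
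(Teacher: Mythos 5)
Your outline for part (1) via the balance formula for self-orthogonal Greenberg conditions is essentially what the paper does (it cites Howard's Prop.~2.2.8 and its elaboration in Wan's Lem.~3.5, which amount to exactly this duality argument). The problems are in parts (2) and (3).

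The critical step in your proposal is the claim that ``Poitou--Tate global duality forces these injections to be surjections up to pseudo-null $\Lambda^\ac$-modules.'' This is not a valid deduction, and in fact if taken at face value it yields the \emph{wrong} answer. In your chain ${\rm Sel}_{{\rm Gr},0}(K,\Ac)\subseteq{\rm Sel}_{{\rm Gr},{\rm Gr}}(K,\Ac)\subseteq{\rm Sel}_{{\rm Gr},\emptyset}(K,\Ac)$, the successive quotients inject into local cohomology groups of $\Lambda^\ac$-corank one, so if \emph{both} injections were surjective up to pseudo-null then ${\rm rank}\,X_{{\rm Gr},\emptyset}-{\rm rank}\,X_{{\rm Gr},0}$ would be $2$, not $1$; if \emph{neither} were, the difference would be $0$. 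The lemma asserts that the difference is exactly $1$, so precisely one of the two inclusions is (pseudo-)saturated, and to detect which one requires a genuine parity or ``core rank'' input. Poitou--Tate alone relates the cokernel of ${\rm loc}_\pp$ on the $\Tc$-side to these quotients on the $\Ac$-side, but says nothing a priori about whether that cokernel has $\Lambda^\ac$-rank $0$ or $1$.

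The paper (following Agboola--Howard) supplies precisely this missing ingredient by invoking the Mazur--Rubin structure theorem for Selmer modules \cite[Lem.~3.5.3, Thm.~4.1.13]{MR-KS}: for each finite-order anticyclotomic character $\psi$, one has a noncanonical isomorphism $H^1_{{\rm Gr},\emptyset}(K,A(\psi))[p^i]\simeq(L/\mathfrak{O}_L)^r[p^i]\oplus H^1_{{\rm Gr},0}(K,A(\psi^{-1}))[p^i]$, where $r$ is the core rank. The core rank is then computed to be $1$ from the Wiles--Greenberg Euler characteristic formula \cite[Thm.~2.18]{DDT} applied to the specific local conditions at $\pp$, $\overline{\pp}$, and the archimedean place. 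It is exactly this computation, not Poitou--Tate directly, that pins down the rank difference as $1$. Moreover, the same structure-theorem isomorphism (after using complex conjugation to replace $\psi^{-1}$ by $\psi$, and after sending $i\to\infty$) simultaneously identifies the torsion parts of the two Selmer groups, which is how the characteristic ideal equality (3) is obtained. Your vague appeal to ``multiplicativity of characteristic ideals'' and ``cancellation by self-duality'' has no concrete mechanism; without the Mazur--Rubin isomorphism there is simply no reason for the torsion submodules of $X_{{\rm Gr},\emptyset}$ and $X_{{\rm Gr},0}$ to agree. Finally, note that the paper's argument lives at finite level (for fixed $\psi$) and must then be fed through a ``bounded cokernel as $\psi$ varies'' control argument to reach the $\Lambda^\ac$-adic statement; your proposal works directly over $\Lambda^\ac$ and so does not address this descent step at all.
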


\begin{proof}
The first statement follows easily from \cite[Prop.~2.2.8]{howard-PhD-I} (see \cite[Lem.~3.5]{wan}),
so we just need to prove the other two, for which we will adapt the arguments in \cite[\S{1.2}]{AHsplit}.

For any finite order character $\psi:\Gamma^\ac\rightarrow\mathfrak{O}_L^\times$
with values in the ring of integers of a finite extension $L/\bQ_p$, we continue to denote
by $\psi$ its natural linear extension $\Lambda^\ac\rightarrow\mathfrak{O}_L$,
and set $A(\psi):=\Ac\otimes_{\Lambda^\ac,\psi}\mathfrak{O}_L$ equipped with the diagonal
$G_K$-action. Then, by \cite[Lem.~3.5.3, Thm.~4.1.13]{MR-KS}
there is a non-canonical isomorphism
\begin{equation}\label{eq:MR}
H^1_{{\rm Gr},\emptyset}(K,A(\psi))[p^i]\simeq(L/\mathfrak{O}_L)^r[p^i]
\oplus H^1_{{\rm Gr},0}(K,A(\psi^{-1}))[p^i],
\end{equation}
where
\begin{itemize}
\item{} $H^1_{{\rm Gr},\emptyset}(K,A(\psi))$ is the subgroup ${\rm Sel}_{{\rm Gr},\emptyset}(K,A(\psi))$
consisting of classes whose restriction at $\pp$ (resp. $\overline{\pp}$)
lies in $H^1(K_\pp,A(\psi))_{\rm div}$ (resp. $H^1_{\rm Gr}(K_{\overline\pp},A(\psi))_{\rm div}$);
\item{} $H^1_{{\rm Gr},0}(K,A(\psi^{-1}))$ is the subgroup of ${\rm Sel}_{{\rm Gr},0}(K,A(\psi^{-1}))$
consisting of classes whose restriction to $\pp$ lies in $H^1_{\rm Gr}(K_\pp,A(\psi))_{\rm div}$;
\item{} and $r\geqslant 0$ is the \emph{core rank} (see \cite[Def.~4.1.11]{MR-KS})
of the Selmer conditions defining $H^1_{{\rm Gr},\emptyset}(K,A(\psi))$.
\end{itemize}
By \cite[Thm.~2.18]{DDT}, the value of $r$ is given by
\begin{equation}\label{eq:DDT}
{\rm corank}_{\ro}H^1(K_\pp,\mathscr{F}^+A(\psi))
+{\rm corank}_{\ro}H^1(K_{\overline{\pp}},A(\psi))
-{\rm corank}_{\ro}H^0(K_{v},A(\psi)),
\end{equation}
where $v$ denotes the unique archimedean place of $K$. By the local Euler characteristic formula,
the first two terms in $(\ref{eq:DDT})$ are equal to $1$ and $2$, respectively, while the third one clearly equals $2$.
Thus $r=1$ in $(\ref{eq:MR})$ and letting $i\to\infty$ we conclude that
\begin{equation}\label{eq:MR2}
H^1_{{\rm Gr},\emptyset}(K,A(\psi))\simeq(L/\mathfrak{O}_L)
\oplus H^1_{{\rm Gr},0}(K,A(\psi^{-1})).
\end{equation}

Now, it is easy to show that the natural restriction maps
\begin{align*}
H^1_{{\rm Gr},\emptyset}(K,A(\psi))
&\longrightarrow{\rm Sel}_{{\rm Gr},\emptyset}(K,\Ac)(\psi)^{\Gamma^\ac}\\
H^1_{{\rm Gr},0}(K,A(\psi^{-1}))
&\longrightarrow{\rm Sel}_{{\rm Gr},0}(K,\Ac)(\psi^{-1})^{\Gamma^\ac}
\end{align*}
are injective with finite bounded cokernel as $\psi$ varies (cf. \cite[Lem.~1.2.4]{AHsplit}),
and since
\[
{\rm Sel}_{{\rm Gr},0}(K,\Ac)(\psi^{-1})^{\Gamma^\ac}
\simeq{\rm Sel}_{{\rm Gr},0}(K,\Ac)(\psi)^{\Gamma^\ac}
\]
by the action of complex conjugation, we see that statements $(2)$ and $(3)$ follow from $(\ref{eq:MR2})$
by the same argument as in \cite[Lem.~1.2.6]{AHsplit}. (For $(3)$, note that in the proof of
\emph{loc.cit.} the prime $p\Lambda^{\rm ac}$ is excluded, but this can be dealt with
similarly as in \cite[Thm.~2.2.10]{howard-PhD-I}.)
\end{proof}

\subsection{Beilinson--Flach classes and explicit reciprocity laws}

In this subsection, we briefly recall
the special type of Beilinson--Flach classes from \cite{KLZ2} that we will
need in this paper, and the ``explicit reciprocity laws'' relating them
to $p$-adic $L$-functions.

For any Hida family $\mathbf{f}$, we let $M(\mathbf{f})^*$ be the
associated Galois representation as in \cite[Def.~7.2.5]{KLZ2};
in particular, $M(\mathbf{f})^*$ is a finite and projective module over a local
$\bZ_p[[\bZ_p^\times]]$-algebra $\Lambda_{\mathbf{f}}$, and there is a short
exact sequence of $\Lambda_{\mathbf{f}}[{\rm Gal}(\overline{\bQ}_p/\bQ_p)]$-modules
\begin{equation}\label{eq:ord}
0\longrightarrow\mathscr{F}^+M(\mathbf{f})^*\longrightarrow
M(\mathbf{f})^*\longrightarrow
\mathscr{F}^-M(\mathbf{f})^*\longrightarrow 0\nonumber
\end{equation}
with the Galois action on $\mathscr{F}^-M(\mathbf{f})^*$ being unramified.
If $\mathbf{f}$ is the Hida family associated with a $p$-ordinary newform $f$ as above,
then there is a height one prime
$\mathfrak{P}\subseteq\Lambda_{\mathbf{f}}$ with residue field $\bQ_p$
for which we have an isomorphism
\begin{equation}\label{eq:sp}
T\simeq M(\mathbf{f})^*\otimes_{\Lambda_{\mathbf{f}}}\Lambda_{\mathbf{f}}/\mathfrak{P}\nonumber
\end{equation}
compatible with the filtrations $\mathscr{F}^\pm$. Set $\Lambda_{\rm cyc}=\bZ_p[[{\rm Gal}(K(\mu_{p^\infty})/K)]]$.


\begin{thm}[Kings--Loeffler--Zerbes]
\label{thm:Col}
There exists an element $\mathcal{BF}\in{\rm Sel}_{{\rm Gr},\emptyset}(K,\mathbf{T})$,
a fractional ideal $I_{\mathbf{g}}\subseteq{\rm Frac}(\bZ_p[[H_{\pp^\infty}]])$,
and $\Lambda$-linear injections with pseudo-null cokernel:
\begin{align*}
{\rm Col}^{(1)}:H^1(K_{\overline{\pp}},\mathscr{F}^{-}\mathbf{T})
\;&\longrightarrow\;\Lambda\otimes_{\ro}\bQ_p,\\
{\rm Col}^{(2)}:H^1(K_{\pp},\mathscr{F}^{+}\mathbf{T})
\;&\longrightarrow\;I_{\mathbf{g}}\hat{\otimes}_{\bZ_p}\Lambda_{\rm cyc}\otimes_{\bZ_p}\bQ_p,
\end{align*}
such that
\begin{align*}
{\rm Col}^{(1)}({\rm loc}_{\overline\pp}(\mathcal{BF}))&=L_p(f/K,\Sigma^{(1)}),\quad\quad
{\rm Col}^{(2)}({\rm loc}_\pp(\mathcal{BF}))=L_p(f/K,\Sigma^{(2)}),
\end{align*}
where $L_p(f/K,\Sigma^{(i)})$ are the $p$-adic $L$-functions of Theorem~\ref{thm:hida}.
\end{thm}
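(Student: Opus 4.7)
The plan is to deduce the theorem from the three-variable Beilinson--Flach Euler system of Kings--Loeffler--Zerbes in \cite{KLZ2}, specialized so that one of the two Hida families is the CM family attached to $K$. Concretely, let $\mathbf{g}$ denote the primitive CM Hida family over $\Lambda_{\mathbf{g}}\simeq\ro[[H_{\pp^\infty}]]$ whose arithmetic specializations are $p$-stabilized theta series $\theta_\psi$ for finite-order Hecke characters $\psi$ of $K$ of $\pp$-power conductor. Because $M(\mathbf{g})^*$ is induced from $G_K$, Shapiro's lemma identifies the Iwasawa cohomology of $M(f)^*\hat\otimes M(\mathbf{g})^*$ along the cyclotomic tower (after the appropriate cyclotomic twist) with $H^1(\mathfrak{G}_{K,\Sigma},\mathbf{T})$, and I would take $\mathcal{BF}$ to be the image of the three-variable BF class under specialization of $\mathbf{f}$ to $f$ followed by this identification.

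Next I would check that $\mathcal{BF}$ lands in the prescribed Selmer group. This is the \emph{geometric} content of the BF construction: these classes arise from étale realizations of motivic cohomology on a product of modular curves, and therefore satisfy Bloch--Kato-type local conditions at primes of good reduction (see \cite[Prop.~5.4.1, Thm.~8.1.4]{KLZ2}). In the Hida-family setting, this translates into the Greenberg condition at the ordinary prime $\pp$, i.e., the localization at $\pp$ lies in $H^1(K_\pp,\mathscr{F}^+\mathbf{T})$, while at $\bar\pp$ the "unbalanced" nature of the two-variable class imposes no restriction, giving $\mathcal{BF}\in{\rm Sel}_{{\rm Gr},\emptyset}(K,\mathbf{T})$.

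For the two Coleman maps: since $\bar\pp$ is split and $\mathscr{F}^-\mathbf{T}$ is unramified there, $H^1(K_{\bar\pp},\mathscr{F}^-\mathbf{T})$ can be analyzed via the Iwasawa cohomology of an unramified rank-one module, producing ${\rm Col}^{(1)}:H^1(K_{\bar\pp},\mathscr{F}^-\mathbf{T})\to\Lambda\otimes_{\ro}\bQ_p$ with pseudo-null cokernel. On the $\pp$-side, ${\rm Col}^{(2)}$ is obtained as the Perrin-Riou big logarithm applied in the Hida-family direction; the larger target $I_{\mathbf{g}}\hat\otimes_{\bZ_p}\Lambda_{\rm cyc}\otimes_{\bZ_p}\bQ_p$ reflects the fact that the big logarithm becomes an isomorphism only after inverting the "missing" Euler-type denominators contributed by the CM family. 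Injectivity with pseudo-null cokernel in both cases is a standard consequence of Perrin-Riou's structure theorems for Iwasawa cohomology.

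The substance of the theorem is the two explicit reciprocity laws. The identity ${\rm Col}^{(2)}({\rm loc}_\pp(\mathcal{BF}))=L_p(f/K,\Sigma^{(2)})$ is essentially the $p$-adic Beilinson formula of \cite[Thm.~10.2.2]{KLZ2}, obtained by interpolating Perrin-Riou's regulator calculation across both Hida families and matching it with the interpolation formula of Theorem~\ref{thm:hida}(i). The companion identity ${\rm Col}^{(1)}({\rm loc}_{\bar\pp}(\mathcal{BF}))=L_p(f/K,\Sigma^{(1)})$ is the dual explicit reciprocity law, extracting finite-order critical values via a dual-exponential computation on the unramified side, and is recorded in \cite[Thm.~6.1.3]{LLZ-K} with the extension carried out in \cite[Thm.~2.1]{BL-ord}. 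The hard part is the bookkeeping needed to verify that Euler factors, archimedean periods, and the $p$-adic normalizations of the KLZ regulator line up exactly with the interpolation formulas of Theorem~\ref{thm:hida}; this is precisely what the cited works accomplish, so the proof reduces to assembling those comparisons.
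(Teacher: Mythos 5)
Your proposal follows the paper's approach: specialize the Kings--Loeffler--Zerbes two-parameter Beilinson--Flach class to the pair $(\mathbf{f},\mathbf{g})$ with $\mathbf{g}$ the CM Hida family of $K$, pass through Shapiro's lemma via $M(\mathbf{g})^*\simeq{\rm Ind}_K^{\bQ}\Lambda_{\pp}$, and construct the Coleman maps by pairing the KLZ big logarithm against the differentials $\eta\otimes\omega$, with both explicit reciprocity laws supplied by \cite[Thm.~10.2.2]{KLZ2}. The steps you gloss over — removing the auxiliary integer $c$ via \cite[Lem.~6.8.9]{LLZ}, verifying unramifiedness away from $p$, the precise source of $I_{\mathbf{g}}$ as the congruence ideal of $\mathbf{g}$ (not a collection of missing Euler factors), and that the crystalline specializations of $\mathbf{g}$ are higher-weight theta series rather than weight-one ones attached to finite-order characters — are exactly the technical points the paper settles by direct citation, and none affects the shape of the argument.
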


\begin{proof}
The fields $K(\frakp^\infty)$ and $K(\mu_{p^\infty})$
are linearly disjoint over $K$ and their compositum is $K(p^\infty)$, and so
\begin{equation}\label{eq:disj}
\bZ_p[[H_{p^\infty}]]
\simeq\bZ_p[[H_{\frakp^\infty}]]
\hat\otimes_{\bZ_p}\Lambda_{\rm cyc}
\end{equation}
as Galois modules, where $\bZ_p[[H_{p^\infty}]]$ is equipped with the $G_K$-action
induced by the inverse of the character $G_K\twoheadrightarrow H_{p^\infty}\hookrightarrow\bZ_p[[H_{p^\infty}]]^\times$,
and similarly for the terms on the right-hand side.
Let $\mathbf{f}$ be the Hida family attached to $f$,
and consider the formal $q$-expansion
\begin{equation}\label{eq:CM}
\mathbf{g}=\sum_{(\fraka,\pp)=1}[\fraka]q^{N(\fraka)}\in\Lambda_\pp[[q]],
\end{equation}
where $\Lambda_\pp=\bZ_p[[H_{\frakp^\infty}]]$.
Taking $m=1$, $c>1$ an integer coprime to $6N_fD_Kp$, and $N$ a positive integer divisible
by $N_f$ and $D_K$ and with the same prime factors as $N_fD_K$, the Beilinson--Flach class
$_{c}\mathcal{BF}_m^{\mathbf{f},\mathbf{g}}$ constructed in \cite[Def.~8.1.1]{KLZ2} gives rise to an element
\begin{equation}\label{eq:def-BF}
_{c}\mathcal{BF}^{\mathbf{f},\mathbf{g}}\in H^1(\mathfrak{G}_{\bQ,S},
M(\mathbf{f})^*\hat\otimes_{\bZ_p} M(\mathbf{g})^*\hat\otimes_{\bZ_p}\Lambda_\Gamma),
\end{equation}
where $\mathfrak{G}_{\bQ,S}$ is the Galois group of the maximal extension of $\bQ$ unramified outside the
primes dividing $N_fD_Kp$, $M(\mathbf{f})^*$ and $M(\mathbf{g})^*$ are the Galois
modules associated to the corresponding Hida families as in [\emph{loc.cit.}, Def.~7.2.5],
and $\Lambda_\Gamma:=\bZ_p[[{\rm Gal}(\bQ(\mu_{p^\infty})/\bQ)]]$ is equipped with the inverse of the natural $G_{\bQ}$-action. With
a slight abuse of notation, we continue to denote by $_{c}\mathcal{BF}^{\mathbf{f},\mathbf{g}}$ the image
of the class $(\ref{eq:def-BF})$ in $H^1(\bQ,M(\mathbf{f})^*\hat\otimes_{} M(\mathbf{g})^*\hat\otimes_{}\Lambda_\Gamma)$
under inflation.

The CM Hida family $\mathbf{g}$ satisfies $M_{}(\mathbf{g})^*\simeq{\rm Ind}_K^\bQ\Lambda_\pp$, where $\Lambda_\pp$
is equipped with the $G_K$-action described in $(\ref{eq:disj})$ above (see \cite[Thm.~5.2.4]{LLZ-K}),
and we let $_{c}\mathcal{BF}$ denote the image of $_{c}\mathcal{BF}^{\mathbf{f},\mathbf{g}}$ under the composite map
\begin{align*}
H^1(\bQ,M(\mathbf{f})^*\hat{\otimes}_{\bZ_p} M_{}(\mathbf{g})^*\hat\otimes_{\bZ_p}\Lambda_{\Gamma})
&\longrightarrow H^1(\bQ,T\hat{\otimes}_{\bZ_p} M_{}(\mathbf{g})^*\hat\otimes_{\bZ_p}\Lambda_{\Gamma})\\
&\overset{\simeq}\longrightarrow H^1(K,T\hat\otimes_{\bZ_p}\bZ_p[[H_{p^\infty}]])\\
&\longrightarrow H^1(K,\mathbf{T}),
\end{align*}
where the first arrow is the natural map induced by the specialization $M(\mathbf{f})^*\rightarrow T$,
the second isomorphism is given by Shapiro's lemma, and the third arrow is induced by the projection
$H_{p^\infty}\twoheadrightarrow\Gamma_K$. By \cite[Lem.~6.8.9]{LLZ},
we may dispense with $c$ by noting that the tame character of
$\mathbf{g}$ is non-trivial at some prime dividing $D_K$; thus we have defined a class
$\mathcal{BF}\in H^1(K,\mathbf{T})$.

By its geometric construction, the resulting class $\mathcal{BF}$ is unramified
outside $p$, and the vanishing of its natural image in $H^1(K_{\pp},\mathscr{F}^-\mathbf{T})$
follows from \cite[Prop.~8.1.7]{KLZ2}; therefore, we have $\mathcal{BF}\in{\rm Sel}_{{\rm Gr},\emptyset}(K,\mathbf{T})$.

Now let $\omega_{\mathbf{f}}$, $\eta_{\mathbf{f}}$, $\omega_{\mathbf{g}}$ and $\eta_{\mathbf{g}}$
be as constructed in \cite[Prop.~10.1.1]{KLZ2}, and define ${\rm Col}^{(1)}$ and  ${\rm Col}^{(2)}$
to be the specializations at $f$ of the maps
\begin{align*}
\left\langle\mathcal{L}(-),\eta_{\mathbf{f}}\otimes\omega_{\mathbf{g}}\right\rangle:
H^1(K_{\overline\pp},\mathscr{F}^{-+}M(\mathbf{f}\otimes\mathbf{g})^*\hat\otimes_{\bZ_p}\Lambda_\Gamma)
\;&\longrightarrow\;
\left(I_\mathbf{f}\hat\otimes_{\bZ_p}\Lambda^{}_{\pp}\hat\otimes_{\bZ_p}\Lambda_{\Gamma}\right)\otimes_{\bZ_p}\bZ_p[\mu_N],\\
\left\langle\mathcal{L}(-),\eta_{\mathbf{g}}\otimes\omega_{\mathbf{f}}\right\rangle:
H^1(K_{\pp},\mathscr{F}^{+-}M(\mathbf{f}\otimes\mathbf{g})^*\hat\otimes_{\bZ_p}\Lambda_\Gamma)
\;&\longrightarrow\;
\left(I_{\mathbf{g}}\hat\otimes_{}\Lambda_{\mathbf{f}}\hat\otimes_{\bZ_p}\Lambda_{\Gamma}\right)\otimes_{\bZ_p}\bZ_p[\mu_N],
\end{align*}
obtained by composing the ``big logarithm map'' $\mathcal{L}$ of \cite[Thm.~8.2.8]{KLZ2}
with the pairing against the classes $\eta_{\mathbf{f}}\otimes\omega_{\mathbf{g}}$ and $\eta_{\mathbf{g}}\otimes\omega_{\mathbf{f}}$,
respectively. (Here $\mathscr{F}^{-+}M(\mathbf{f}\otimes\mathbf{g})^*$ and $\mathscr{F}^{+-}M(\mathbf{f}\otimes\mathbf{g})^*$ are
the subquotients of $M(\mathbf{f})^*\hat\otimes M(\mathbf{g})^*$ defined in [\emph{loc.cit.}, p.~75],
and $I_{\mathbf{g}}\subseteq{\rm Frac}(\Lambda_\pp)$ and $I_{\mathbf{f}}\subseteq{\rm Frac}(\Lambda_{\mathbf{f}})$
are the \emph{congruence ideal} of the corresponding Hida family; see [\emph{loc.cit.}, Rem.~9.6.3].)

Thus defined, the claim about the kernel and cokernel of the maps ${\rm Col}^{(i)}$ follows
from the last two claims in \cite[Thm.~8.2.3]{KLZ2}, and their claimed relation with the $p$-adic
$L$-functions $L_p(f/K,\Sigma^{(i)})$ is a consequence of the explicit reciprocity law of \cite[Thm.~10.2.2]{KLZ2}.
\end{proof}

Denote by $\mathcal{BF}^\ac$ the image of $\mathcal{BF}$ 
under the natural map $H^1(K,\mathbf{T})\rightarrow H^1(K,\Tc)$.

\begin{cor}\label{cor:str-Gr}
For all primes $v\mid p$, the class ${\rm loc}_v(\mathcal{BF})$ is non-torsion over $\Lambda$. Moreover,
if hypothesis ${\rm(Heeg)}$ holds, then ${\rm loc}_{\pp}(\mathcal{BF}^\ac)$ is non-torsion over $\Lambda^{\ac}$,
and ${\rm loc}_{\overline{\pp}}(\mathcal{BF}^\ac)$ lies in the kernel of the natural map
\[
H^1(K_{\overline\pp},\Tc)\longrightarrow H^1(K_{\overline{\pp}},\mathscr{F}^-\Tc).
\]
In particular, $\mathcal{BF}^\ac\in{\rm Sel}(K,\Tc)$.
\end{cor}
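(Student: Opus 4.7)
The plan is to play the explicit reciprocity laws of Theorem~\ref{thm:Col} against the (non)vanishing results for the $p$-adic $L$-functions established in Section~\ref{sec:padicL}, handling the two-variable assertion and its anticyclotomic counterpart separately.

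For the first assertion, observe that $\mathcal{BF}\in{\rm Sel}_{{\rm Gr},\emptyset}(K,\mathbf{T})$ automatically places ${\rm loc}_\pp(\mathcal{BF})$ in $H^1(K_\pp,\mathscr{F}^+\mathbf{T})$, the natural source of ${\rm Col}^{(2)}$. Since $L_p(f/K,\Sigma^{(2)})$ is a nonzero element of ${\rm Frac}(\Lambda)$ by Remark~\ref{rem:non0} and ${\rm Col}^{(2)}$ is an injective $\Lambda$-module map, the class ${\rm loc}_\pp(\mathcal{BF})$ cannot be $\Lambda$-torsion: any nontrivial annihilator would push forward to one of $L_p(f/K,\Sigma^{(2)})$, which is impossible in a fraction field. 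The case $v=\overline\pp$ is entirely analogous using ${\rm Col}^{(1)}$, after first projecting ${\rm loc}_{\overline\pp}(\mathcal{BF})$ to $H^1(K_{\overline\pp},\mathscr{F}^-\mathbf{T})$; non-torsionness of this projection transports back to non-torsionness of the original class by $\Lambda$-linearity.

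Under hypothesis (Heeg) I would then descend to the anticyclotomic tower along the projection $\Lambda\twoheadrightarrow\Lambda^\ac$. The two-variable Coleman maps of Theorem~\ref{thm:Col} specialize to anticyclotomic Coleman maps ${\rm Col}^{(i),\ac}$ whose values at the relevant local components of $\mathcal{BF}^\ac$ are $L_p^\ac(f/K,\Sigma^{(i)})$. For $v=\pp$, the nonvanishing of $L_p^\ac(f/K,\Sigma^{(2)})$, provided by Theorem~\ref{thm:factorization} (equivalently, by Corollary~\ref{cor:wan-bdp} together with the nonvanishing of $\mathscr{L}_\pp^{\tt BDP}(f/K)$ in Theorem~\ref{thm:bdp}), upgrades the previous argument to yield that ${\rm loc}_\pp(\mathcal{BF}^\ac)$ is non-torsion over $\Lambda^\ac$. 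For $v=\overline\pp$, by contrast, Proposition~\ref{thm:hida-1} gives $L_p^\ac(f/K,\Sigma^{(1)})\equiv 0$, so the image of ${\rm loc}_{\overline\pp}(\mathcal{BF}^\ac)$ in $H^1(K_{\overline\pp},\mathscr{F}^-\Tc)$ is annihilated by ${\rm Col}^{(1),\ac}$; the injectivity of this specialized Coleman map will then force the image itself to vanish, which is precisely the asserted kernel condition.

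The final assertion $\mathcal{BF}^\ac\in{\rm Sel}(K,\Tc)$ follows by assembly: $\mathcal{BF}^\ac$ inherits from $\mathcal{BF}$ both unramifiedness outside $p$ and the $\mathscr{F}^+$-condition at $\pp$, while the previous paragraph supplies the $\mathscr{F}^+$-condition at $\overline\pp$. The main technical point I expect will require care is the injectivity of the anticyclotomic specialization ${\rm Col}^{(1),\ac}$: the two-variable map of Theorem~\ref{thm:Col} is only injective with pseudo-null cokernel, and naive specialization could in principle introduce a kernel. To upgrade the vanishing of $L_p^\ac(f/K,\Sigma^{(1)})$ to actual vanishing of the image of ${\rm loc}_{\overline\pp}(\mathcal{BF}^\ac)$ in $H^1(K_{\overline\pp},\mathscr{F}^-\Tc)$ (rather than merely its $\Lambda^\ac$-torsion nature), one must rely on the torsion-freeness of this $H^1$ as a rank-one $\Lambda^\ac$-module, a consequence of local Tate duality and the local Euler characteristic formula, combined with direct control over the Kings--Loeffler--Zerbes big logarithm map under anticyclotomic specialization.
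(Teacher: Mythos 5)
Your proposal follows the same strategy as the paper's proof: play the explicit reciprocity laws of Theorem~\ref{thm:Col} against the nonvanishing results of Remark~\ref{rem:non0} and Theorem~\ref{thm:factorization} for the first two claims, and against the vanishing of $L_p^\ac(f/K,\Sigma^{(1)})$ (Proposition~\ref{thm:hida-1}) together with the injectivity of ${\rm Col}^{(1)}$ for the third. The paper's proof of this corollary is a three-sentence citation of exactly these inputs, so at the level of the paper's own granularity your argument is the same.

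What you add, and what is worth keeping, is the explicit flagging of the step the paper glosses over: passing from injectivity of ${\rm Col}^{(1)}$ over $\Lambda$ to the conclusion that the image of ${\rm loc}_{\overline{\pp}}(\mathcal{BF}^\ac)$ in $H^1(K_{\overline\pp},\mathscr{F}^-\Tc)$ actually vanishes, rather than merely being sent to zero after composing with the anticyclotomic reduction of ${\rm Col}^{(1)}$. Since the two-variable cokernel is only pseudo-null, the reduced map mod $I^{\rm cyc}=(\gamma^{\rm cyc}-1)$ could a priori acquire a kernel coming from ${\rm Tor}_1^\Lambda({\rm coker}\,{\rm Col}^{(1)},\Lambda^\ac)$; you are right that one needs additional input, and the $\Lambda^\ac$-torsion-freeness of $H^1(K_{\overline\pp},\mathscr{F}^-\Tc)$ together with the nonvanishing of the reduced map is a correct way to close it (this is essentially what the paper uses implicitly later, in Lemma~\ref{lem:3.1.1}, when it asserts that ${\rm Col}^{(1)}$ yields an identification $H^1(K_{\overline{\pp}},\Tc)/H^1_{\rm Gr}(K_{\overline\pp},\Tc)\otimes\bQ_p\simeq\Lambda^\ac\otimes\bQ_p$). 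The remaining claims in your write-up are correct and match the paper; in particular, the deduction of $\mathcal{BF}^\ac\in{\rm Sel}(K,\Tc)$ from the $\mathscr{F}^+$-conditions at both primes above $p$ together with unramifiedness away from $p$ is exactly the paper's assembly.
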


\begin{proof}
In light of 
Theorem~\ref{thm:Col},
the first claim follows from the nonvanishing of the $p$-adic $L$-functions
$L_p(f/K,\Sigma^{(i)})$ 
of Theorem~\ref{thm:hida} (see Remark~\ref{rem:non0});
the second claim from the nonvanishing of the projection $L_p^\ac(f/K,\Sigma^{(2)})$ in Theorem~\ref{thm:factorization};
and the last claim from the vanishing of the projection $L_p^\ac(f/K,\Sigma^{(1)})$ in Proposition~\ref{thm:hida-1}
and the injectivity of ${\rm Col}^{(1)}$.
\end{proof}

\subsection{Two-variable main conjectures}\label{sec:ES}

Recall that the generalized Selmer groups ${\rm Sel}_{\mathscr{L}}(K,\mathbf{T})$ and ${\rm Sel}_{\mathscr{L}}(K,\Tc)$
are submodules of $H^1(\mathfrak{G}_{K,\Sigma},\mathbf{T})$ and $H^1(\mathfrak{G}_{K,\Sigma},\Tc)$, respectively.

\begin{lem}\label{lem:no-tors}
Assume that the Galois representation $G_K\rightarrow{\rm Aut}_{\bZ_p}(T)$ is surjective. Then the modules
$H^1(\mathfrak{G}_{K,\Sigma},\mathbf{T})$ and $H^1(\mathfrak{G}_{K,\Sigma},\Tc)$ are torsion-free
over $\Lambda$ and $\Lambda^\ac$, respectively.
\end{lem}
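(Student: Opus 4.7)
The plan is to establish $\Lambda$-torsion-freeness of $H^1(\mathfrak{G}_{K,\Sigma},\mathbf{T})$ by combining $\bZ_p$-torsion-freeness at finite levels (via Shapiro's lemma) with the vanishing of $(\mathbf{T}/\xi\mathbf{T})^{G_K}$ for $\xi\in\Lambda\setminus p\Lambda$, the latter fueled by the nonabelian image of $G_K$ on $T/pT$. The argument for $H^1(\mathfrak{G}_{K,\Sigma},\Tc)$ is identical with $\Lambda$ replaced by $\Lambda^{\ac}$, so I focus on $\mathbf{T}$.

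\emph{Step 1 (residual irreducibility over the tower, giving $\bZ_p$-torsion-freeness).} By surjectivity of $G_K\to\GL_2(\bZ_p)$, the mod-$p$ representation maps onto $\GL_2(\mathbf{F}_p)$. Since every Borel subgroup of $\GL_2(\mathbf{F}_p)$ has index $p+1$, coprime to $p$ (as $p\geqslant 5$), no subgroup of $p$-power index can be contained in a Borel. Applied to the image of $G_F$ for any finite $F/K$ inside $K_\infty/K$ (a tower of $p$-power degree), this forces $T/pT$ to remain irreducible as $G_F$-module, so $(T/pT)^{G_F}=0$; this in turn implies $(T\otimes_{\bZ_p}\bQ_p/\bZ_p)^{G_F}=0$, and the long exact sequence of $0\to T\to T\otimes_{\bZ_p}\bQ_p\to T\otimes_{\bZ_p}\bQ_p/\bZ_p\to 0$ shows that $H^1(\mathfrak{G}_{F,\Sigma},T)$ is $\bZ_p$-torsion-free. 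Since $H^1(\mathfrak{G}_{K,\Sigma},\mathbf{T})\simeq\varprojlim_F H^1(\mathfrak{G}_{F,\Sigma},T)$ by Shapiro's lemma, the inverse limit is also $\bZ_p$-torsion-free.

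\emph{Step 2 (the key $H^0$-vanishing).} The local input is: if $U$ is a continuous $\mathbf{F}_p[G_K]$-module on which $G_K$ acts through its abelian quotient $\Gamma_K$, then $\Hom_{\mathbf{F}_p[G_K]}(V,U)=0$ for every irreducible $\mathbf{F}_p[G_K]$-module $V$ with nonabelian image; indeed, a nonzero such map would embed $V$ in $U$, forcing the action on $V$ to factor through $\Gamma_K$. Applied to $V=(T/pT)^\vee$ and $U=\Lambda/(p,\xi)\Lambda$, this yields $(\mathbf{T}/(p,\xi)\mathbf{T})^{G_K}\simeq\bigl((T/pT)\otimes_{\mathbf{F}_p}U\bigr)^{G_K}=0$. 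Taking $\xi=0$ and using the injection $\mathbf{T}^{G_K}/p\mathbf{T}^{G_K}\hookrightarrow(\mathbf{T}/p\mathbf{T})^{G_K}$ (which follows from $p$-torsion-freeness of $\mathbf{T}$), Nakayama's lemma on the finitely generated $\Lambda$-module $\mathbf{T}^{G_K}$ gives $\mathbf{T}^{G_K}=0$. For $\xi\in\Lambda\setminus p\Lambda$, the UFD property makes $\mathbf{T}/\xi\mathbf{T}$ $p$-torsion-free, and the same mechanism gives $(\mathbf{T}/\xi\mathbf{T})^{G_K}/p\hookrightarrow(\mathbf{T}/(p,\xi)\mathbf{T})^{G_K}=0$; Nakayama on the finitely generated $\Lambda/\xi$-module $(\mathbf{T}/\xi\mathbf{T})^{G_K}$ (noting $p\in\frakm_{\Lambda/\xi}$) concludes $(\mathbf{T}/\xi\mathbf{T})^{G_K}=0$.

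\emph{Step 3 (assembly).} With $\mathbf{T}^{G_K}=0$ in hand, the long exact sequence of $0\to\mathbf{T}\xrightarrow{\xi}\mathbf{T}\to\mathbf{T}/\xi\mathbf{T}\to 0$ identifies $H^1(\mathfrak{G}_{K,\Sigma},\mathbf{T})[\xi]$ with $(\mathbf{T}/\xi\mathbf{T})^{G_K}$, which vanishes for $\xi\notin p\Lambda$ by Step 2. For arbitrary nonzero $\xi\in\Lambda$, write $\xi=p^k\xi'$ with $k\geqslant 0$ and $\xi'\notin p\Lambda$: if $\xi x=0$, then $\xi'x$ is $p^k$-torsion in $H^1(\mathfrak{G}_{K,\Sigma},\mathbf{T})$, hence zero by Step 1, so $x=0$ by the previous case. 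Hence $H^1(\mathfrak{G}_{K,\Sigma},\mathbf{T})$ is $\Lambda$-torsion-free. The main subtlety is the Nakayama step of Step 2, which depends on $(\mathbf{T}/\xi\mathbf{T})^{G_K}$ being finitely generated over $\Lambda/\xi$; this is immediate from the $\Lambda$-freeness of $\mathbf{T}=T\otimes_{\bZ_p}\Lambda$ and Noetherianness of $\Lambda$.
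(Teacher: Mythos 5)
Your argument is correct, and it follows the same route as the one the paper cites: both Howard's Lemma 2.2.9 in \cite{howard-PhD-I} and the underlying Perrin-Riou argument in \cite[\S 1.3.3]{PR:Lp} reduce $\Lambda$-torsion-freeness of the Iwasawa cohomology to the vanishing of $(\mathbf{T}/\xi\mathbf{T})^{G_K}$ via the long exact sequence of $0\to\mathbf{T}\xrightarrow{\xi}\mathbf{T}\to\mathbf{T}/\xi\mathbf{T}\to 0$, with that vanishing powered by residual irreducibility under the big image hypothesis. You have simply spelled out in full the details the paper relegates to a one-line citation.
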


\begin{proof}
As shown in \cite[Lem.~2.2.9]{howard-PhD-I},
this follows immediately from \cite[\S{1.3.3}]{PR:Lp}
(whose argument readily extends to the two-variable setting).
\end{proof}

%
For any $\ro$-module $M$, define $M_{\unr}:=M\hat{\otimes}_{\ro}{\unr}$. Let
$\gamma^{\ac}\in\Gamma^\ac$ be a topological generator, and let $P\subseteq\Lambda$ be
the pullback of the augmentation ideal of $P_{\ac}:=(\gamma^\ac-1)\subseteq\Lambda^\ac$.

\begin{thm}\label{thm:2-varIMC}
Assume that $G_K\rightarrow{\rm Aut}_{\bZ_p}(T)$ is surjective.
Then the following two statements are equivalent:
\begin{enumerate}
\item{} $X_{{\rm Gr},0}(K,\mathbf{A})$ is $\Lambda$-torsion,
${\rm Sel}_{{\rm Gr},\emptyset}(K,\mathbf{T})$ has $\Lambda$-rank $1$, and
\[
Ch_{\Lambda}(X_{{\rm Gr},0}(K,\mathbf{A}))=
Ch_\Lambda\bigg(\frac{{\rm Sel}_{{\rm Gr},\emptyset}(K,\mathbf{T})}{\Lambda\cdot\mathcal{BF}}\biggr)
\]
in $\Lambda[1/P]$.
\item{} Both $X_{\emptyset,0}(K,\mathbf{A})$ and ${\rm Sel}_{0,\emptyset}(K,\mathbf{T})$ are $\Lambda$-torsion, and
\[
Ch_{\Lambda_{\unr}}(X_{\emptyset,0}(K,\mathbf{A})_{\unr})=(\mathscr{L}_\pp(f/K))
\]
in $\Lambda_{\unr}[1/P]$.
\end{enumerate}
\end{thm}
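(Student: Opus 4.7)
The equivalence (1) $\iff$ (2) should follow from global Poitou--Tate duality combined with the explicit reciprocity law of Theorem~\ref{thm:Col} and the factorization of $\mathscr{L}_\pp(f/K)$ from Theorem~\ref{prop:wan}. The bridge between the two sides is that the Beilinson--Flach class $\mathcal{BF}$ lives in the ``larger'' Selmer group ${\rm Sel}_{{\rm Gr},\emptyset}(K,\mathbf{T})$ on side (1), while its localization ${\rm loc}_\pp(\mathcal{BF})$ maps via ${\rm Col}^{(2)}$ to the $p$-adic $L$-function that, up to a Katz factor, recovers the $\mathscr{L}_\pp(f/K)$ appearing in (2).

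The first step is to invoke Poitou--Tate global duality for the nested pair of local conditions $(0_\pp,\emptyset_{\overline\pp}) \subseteq ({\rm Gr}_\pp,\emptyset_{\overline\pp})$ at the places above $p$, using that the Greenberg condition is self-dual under local Tate duality while the duals of the $\emptyset$ and $0$ conditions swap. Since the local conditions at primes in $\Sigma\setminus\{p\}$ coincide on both sides and are unramified (hence self-dual), this yields the long exact sequence
\[
0 \longrightarrow {\rm Sel}_{0,\emptyset}(K,\mathbf{T}) \longrightarrow {\rm Sel}_{{\rm Gr},\emptyset}(K,\mathbf{T}) \xrightarrow{\;{\rm loc}_\pp\;} H^1_{{\rm Gr}}(K_\pp,\mathbf{T}) \longrightarrow X_{\emptyset,0}(K,\mathbf{A}) \longrightarrow X_{{\rm Gr},0}(K,\mathbf{A}) \longrightarrow 0.
\]
Combining this with the local Euler characteristic formula (which gives that $H^1_{{\rm Gr}}(K_\pp,\mathbf{T}) = H^1(K_\pp,\mathscr{F}^+\mathbf{T})$ has $\Lambda$-rank $1$), the non-triviality of ${\rm loc}_\pp(\mathcal{BF})$ from Corollary~\ref{cor:str-Gr}, and the torsion-freeness of $H^1(\mathfrak{G}_{K,\Sigma},\mathbf{T})$ from Lemma~\ref{lem:no-tors}, one reads off the equivalence of the rank/torsion conditions: in either situation, ${\rm Sel}_{0,\emptyset}(K,\mathbf{T})$ is $\Lambda$-torsion, $X_{\emptyset,0}(K,\mathbf{A})$ is $\Lambda$-torsion (iff $X_{{\rm Gr},0}(K,\mathbf{A})$ is), and $\mathcal{BF}$ generates a free rank-$1$ submodule of ${\rm Sel}_{{\rm Gr},\emptyset}(K,\mathbf{T})$.

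Second, I would break the above long exact sequence into short exact sequences of torsion $\Lambda$-modules after quotienting by $\Lambda\cdot\mathcal{BF}$ on the Selmer side and by $\Lambda\cdot{\rm loc}_\pp(\mathcal{BF})$ on the local side. Taking characteristic ideals and multiplying gives an identity of the form
\[
Ch_\Lambda\bigl({\rm Sel}_{{\rm Gr},\emptyset}(K,\mathbf{T})/\Lambda\cdot\mathcal{BF}\bigr)\cdot Ch_\Lambda\bigl(X_{\emptyset,0}(K,\mathbf{A})\bigr) = Ch_\Lambda\bigl({\rm Sel}_{0,\emptyset}(K,\mathbf{T})\bigr)\cdot Ch_\Lambda\bigl(X_{{\rm Gr},0}(K,\mathbf{A})\bigr)\cdot Ch_\Lambda\bigl(H^1_{{\rm Gr}}(K_\pp,\mathbf{T})/\Lambda\cdot{\rm loc}_\pp(\mathcal{BF})\bigr)
\]
in $\Lambda[1/P]$. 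The explicit reciprocity law (Theorem~\ref{thm:Col})---that ${\rm Col}^{(2)}$ is a $\Lambda$-linear injection with pseudo-null cokernel sending ${\rm loc}_\pp(\mathcal{BF})$ to $L_p(f/K,\Sigma^{(2)})$---identifies, after inverting $P$ and base-changing to $\Lambda_{\unr}$, the last factor on the right with an ideal generated by $L_p(f/K,\Sigma^{(2)})$ divided by a generator of the congruence ideal $I_{\mathbf{g}}$ of the CM Hida family $\mathbf{g}$.

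Finally, I would invoke Theorem~\ref{prop:wan}, which expresses $\mathscr{L}_\pp(f/K)$ as the product of $L_p(f/K,\Sigma^{(2)})$ with the Katz $p$-adic $L$-function $\mathscr{L}_\pp(K)$ (up to a unit in $\Lambda_{\unr}^\times$), together with the two-variable Iwasawa main conjecture for $K$ (Rubin, Hida--Tilouine) identifying the Katz factor $\mathscr{L}_\pp(K)$ with a characteristic ideal, and Hida's theory identifying $I_{\mathbf{g}}$ with (a twist of) $\mathscr{L}_\pp(K)$. Substituting all of these into the identity above converts $Ch_\Lambda(X_{{\rm Gr},0})=Ch_\Lambda({\rm Sel}_{{\rm Gr},\emptyset}/\Lambda\cdot\mathcal{BF})$ into $Ch_{\Lambda_{\unr}}(X_{\emptyset,0})_{\unr}=(\mathscr{L}_\pp(f/K))$, and vice versa, yielding the equivalence (1) $\iff$ (2). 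The main obstacle is the Katz-factor bookkeeping in this final step: reconciling the $\mathscr{L}_\pp(K)$ that appears in Theorem~\ref{prop:wan} with its appearances on the cohomological side (via the congruence ideal $I_{\mathbf{g}}$ and via the characteristic ideal of ${\rm Sel}_{0,\emptyset}(K,\mathbf{T})$ from Rubin's main conjecture) requires careful matching up to units, but the remaining steps are essentially diagram chases made rigid by Corollary~\ref{cor:str-Gr} and Lemma~\ref{lem:no-tors}.
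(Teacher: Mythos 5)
Your proposal follows essentially the same route as the paper: the same Poitou--Tate exact sequence between the $(0,\emptyset)$- and $({\rm Gr},\emptyset)$-Selmer groups, the same passage to the quotient by $\Lambda\cdot\mathcal{BF}$ on the global side and $\Lambda\cdot{\rm loc}_\pp(\mathcal{BF})$ on the local side, and the same conversion of ${\rm Col}^{(2)}$-values into $(\mathscr{L}_\pp(f/K))$ via Theorem~\ref{prop:wan}, the congruence ideal $I_{\mathbf{g}}$ (Hida--Tilouine), and Rubin's main conjecture for $K$. The paper simply records that ${\rm Sel}_{0,\emptyset}(K,\mathbf{T})=0$ (torsion plus torsion-free by Lemma~\ref{lem:no-tors}) before writing the four-term sequence, and only proves $(2)\Rightarrow(1)$ in detail, but that is a presentational rather than a mathematical difference from what you outline.
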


\begin{proof}
We only prove the implication $(2)\Rightarrow(1)$,
which is the only direction we will use in the following, but the
proof of the converse implication follows from the same ideas.
Assume that ${\rm Sel}_{0,\emptyset}(K,\mathbf{T})$ is $\Lambda$-torsion,
and consider the tautological exact sequence
\[
0\longrightarrow{\rm Sel}_{0,\emptyset}(K,\mathbf{T})\longrightarrow
{\rm Sel}_{{\rm Gr},\emptyset}(K,\mathbf{T})\xrightarrow{{\rm loc}_\pp}
H^1_{\rm Gr}(K_\pp,\mathbf{T}).
\]
By Corollary~\ref{cor:str-Gr} the image of ${\rm loc}_\pp$ is nontorsion over $\Lambda$,
and so (since ${\rm rank}_{\Lambda}(H^1_{\rm Gr}(K_\pp,\mathbf{T}))=1$) the
cokernel of ${\rm loc}_{\pp}$ is $\Lambda$-torsion and ${\rm Sel}_{{\rm Gr},\emptyset}(K,\mathbf{T})$ has
$\Lambda$-rank $1$. Since ${\rm Sel}_{0,\emptyset}(K,\mathbf{T})$ is has no $\Lambda$-torsion by Lemma~\ref{lem:no-tors},
our assumption implies that ${\rm Sel}_{0,\emptyset}(K,\mathbf{T})$ is trivial, and so
Poitou--Tate duality gives rise to the exact sequence
\begin{equation}\label{eq:ES-1b}
0\longrightarrow{\rm Sel}_{{\rm Gr},\emptyset}(K,\mathbf{T})\longrightarrow
H^1_{\rm Gr}(K_{\pp},\mathbf{T})\longrightarrow
X_{\emptyset,0}(K,\mathbf{A})\longrightarrow X_{{\rm Gr},0}(K,\mathbf{A})\longrightarrow 0,\nonumber
\end{equation}
and moding out by the images of $\mathcal{BF}$ 
to the exact sequence
\begin{equation}\label{eq:mod-BF}
0\longrightarrow\frac{{\rm Sel}_{{\rm Gr},\emptyset}(K,\mathbf{T})_{}}{\Lambda_{}\cdot\mathcal{BF}}
\longrightarrow\frac{H^1_{\rm Gr}(K_{\pp},\mathbf{T})_{}}{\Lambda_{}\cdot{\rm loc}_\pp(\mathcal{BF})}
\longrightarrow X_{\emptyset,0}(K,\mathbf{A})_{}
\longrightarrow X_{{\rm Gr},0}(K,\mathbf{A})_{}\longrightarrow 0.
\end{equation}

Now recall the congruence ideal $I_{\mathbf{g}}$ 
of CM forms $\mathbf{g}$ introduced in the proof of Theorem~\ref{thm:Col},
which we shall view as an element in ${\rm Frac}(\Lambda^\ac)$ in the following.
By \cite[Thm.~A.4]{HT-durham} (see also \cite[Thm.~I]{HT-ENS} and \cite[Thm.~1.4.7]{HT-117})
we then have the divisibilities
\begin{equation}\label{eq:HT}
(h_K\cdot\mathscr{L}_\pp^{\ac}(K))\;\supseteq\; I_{\mathbf{g}}^{-1}\supseteq\; (h_K\cdot\mathscr{F}_\pp^{\ac}(K))
\end{equation}
in $\Lambda^\ac_{R_0}[1/P_\ac]$, where $h_K$ is the class number of $K$, $\mathscr{L}_\pp^{\ac}(K)$
is an anticyclotomic projection of the Katz $p$-adic $L$-function of $K$ as in Theorem~\ref{prop:wan},
and $\mathscr{F}_\pp^\ac(K)$ generates the characteristic ideal of the Pontrjagin dual of
the maximal abelian pro-$p$-extension of $K_\infty^\ac$ unramified outside $\pp$.
On the other hand, Rubin's proof \cite{rubin-IMC} of the Iwasawa main conjecture for $K$ yields the equality
\begin{equation}\label{eq:Rubin}
(\mathscr{F}_\pp^\ac(K))=(\mathscr{L}_\pp^\ac(K))
\end{equation}
as ideals in $\Lambda_{R_0}^\ac$. Combining $(\ref{eq:HT})$ and $(\ref{eq:Rubin})$, it follows
that $I_{\mathbf{g}}^{-1}$ is generated by $h_K\cdot\mathscr{L}_\pp^{\rm ac}(K)$ up to powers of $P$,
and hence by Theorem~\ref{thm:Col} and the factorization in Theorem~\ref{prop:wan}, the map
$(h_K\cdot\mathscr{L}_\pp^{\rm ac}(K))\times{\rm Col}^{(2)}$ yields an injection
\begin{equation}\label{eq:katz-col}
\frac{H^1_{\rm Gr}(K_{\pp},\mathbf{T})_{\unr}}{\Lambda_{\unr}\cdot{\rm loc}_\pp(\mathcal{BF})}
\hookrightarrow\frac{\Lambda_{\unr}}{\Lambda_{\unr}\cdot\mathscr{L}_\pp(f/K)}
\end{equation}
after extending scalars to $\Lambda_{\unr}[1/P]$ with pseudo-null cokernel.
By multiplicativity of characteristic ideals in exact sequences,
taking characteristic ideals in $(\ref{eq:mod-BF})$ and $(\ref{eq:katz-col})$, the result follows.
\end{proof}

We record the following result for our later use.

\begin{prop}\label{thm:str}
Assume that equality in part (1) of Theorem~\ref{thm:2-varIMC} holds as ideals in $\Lambda$.
Then
\[
Ch_{\Lambda^\ac}(X_{{\rm Gr},0}(K,\Ac))
=Ch_{\Lambda^\ac}\biggl(\frac{{\rm Sel}_{{\rm Gr},\emptyset}(K,\Tc)}{\Lambda^\ac\cdot\mathcal{BF}^\ac}\biggr)
\]
as ideals in $\Lambda^\ac$.
\end{prop}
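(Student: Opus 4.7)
The plan is to derive the anticyclotomic equality from the assumed two-variable equality of Theorem~\ref{thm:2-varIMC}(1) by specializing modulo the cyclotomic augmentation ideal $I_\cyc := (\gamma^\cyc - 1)\Lambda$, under the identification $\Lambda/I_\cyc \simeq \Lambda^\ac$. The key technical devices will be two control theorems relating the two-variable Selmer structures to their anticyclotomic counterparts, together with the standard principle that characteristic ideals commute with specialization modulo a regular element whenever the relevant module has no such torsion.

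First, I would exploit the tautological short exact sequences
\[0 \longrightarrow \mathbf{T} \xrightarrow{\gamma^\cyc - 1} \mathbf{T} \longrightarrow \Tc \longrightarrow 0
\quad\textrm{and}\quad
0 \longrightarrow \Ac \longrightarrow \mathbf{A} \xrightarrow{\gamma^\cyc - 1} \mathbf{A} \longrightarrow 0,\]
and pass to the associated long exact sequences in Galois cohomology. Using that $H^1(\mathfrak{G}_{K,\Sigma},\mathbf{T})$ is $\Lambda$-torsion-free (Lemma~\ref{lem:no-tors}), a place-by-place analysis of the Greenberg local conditions—routine for $v\nmid p$ and at $v\mid p$ controlled by the unramified nature of $\mathscr{F}^-$ together with the behavior of $\mathscr{F}^+$ under specialization—would yield natural comparison maps
\[\frac{{\rm Sel}_{{\rm Gr},\emptyset}(K, \mathbf{T})}{I_\cyc \cdot {\rm Sel}_{{\rm Gr},\emptyset}(K, \mathbf{T})} \longrightarrow {\rm Sel}_{{\rm Gr},\emptyset}(K, \Tc), \qquad \frac{X_{{\rm Gr},0}(K, \mathbf{A})}{I_\cyc \cdot X_{{\rm Gr},0}(K, \mathbf{A})} \longrightarrow X_{{\rm Gr},0}(K, \Ac)\]
with kernel and cokernel pseudo-null over $\Lambda^\ac$.

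Second, since the class $\mathcal{BF}\in{\rm Sel}_{{\rm Gr},\emptyset}(K,\mathbf{T})$ is non-torsion (Corollary~\ref{cor:str-Gr}) and specializes to $\mathcal{BF}^\ac$, the torsion-freeness of the ambient cohomology modules ensures that $\Lambda\cdot\mathcal{BF}$ is free of rank one and that its specialization coincides with $\Lambda^\ac\cdot\mathcal{BF}^\ac$. A short diagram chase then upgrades the first control statement to an identification, up to pseudo-null modules,
\[\Bigl(\frac{{\rm Sel}_{{\rm Gr},\emptyset}(K, \mathbf{T})}{\Lambda \cdot \mathcal{BF}}\Bigr)\Big/I_\cyc \longrightarrow \frac{{\rm Sel}_{{\rm Gr},\emptyset}(K, \Tc)}{\Lambda^\ac \cdot \mathcal{BF}^\ac}.\]

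Finally, since pseudo-null modules do not contribute to characteristic ideals, combining the two control isomorphisms above reduces the integral equality of Theorem~\ref{thm:2-varIMC}(1) modulo $I_\cyc$ to the asserted equality in $\Lambda^\ac$. The main obstacle I anticipate is producing the control theorems with sharp enough error terms: one needs to ensure the cokernels of the specialization maps are genuinely pseudo-null, not merely finitely generated torsion, so that equality (and not only divisibility) of characteristic ideals is preserved under specialization. This will rest on a careful analysis of the local cohomology of $\mathscr{F}^\pm$ at primes above $p$ and on Lemma~\ref{lem:str-rel} to reconcile the choices of local conditions at $\pp$ versus $\overline{\pp}$ on the two sides.
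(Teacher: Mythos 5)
There is a genuine gap, and it lies precisely where you identified your "main obstacle," though the nature of the difficulty is different from what you anticipate. You propose to show that the specialization maps
\[
{\rm Sel}_{{\rm Gr},\emptyset}(K,\mathbf{T})/I_{\rm cyc}\longrightarrow {\rm Sel}_{{\rm Gr},\emptyset}(K,\Tc),
\qquad
X_{{\rm Gr},0}(K,\mathbf{A})/I_{\rm cyc}\longrightarrow X_{{\rm Gr},0}(K,\Ac)
\]
have pseudo-null kernel and cokernel over $\Lambda^\ac$, and then invoke the principle that pseudo-null modules do not affect characteristic ideals. But even a perfect control theorem (an isomorphism, which is what \cite[Prop.~3.9]{SU} actually gives for the $X$-side) does \emph{not} by itself yield $Ch_{\Lambda^\ac}(X_{{\rm Gr},0}(K,\Ac)) = Ch_\Lambda(X_{{\rm Gr},0}(K,\mathbf{A}))\cdot\Lambda^\ac$. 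By \cite[Lem.~6.2(ii)]{rubin-IMC}, passing a characteristic ideal through quotient by a regular element $\gamma^{\rm cyc}-1$ introduces an extra factor $\mathfrak{D}=Ch_{\Lambda^\ac}(X_{{\rm Gr},0}(K,\mathbf{A})[I^{\rm cyc}])$, and nothing forces the $I^{\rm cyc}$-torsion submodule of $X_{{\rm Gr},0}(K,\mathbf{A})$ to vanish or be pseudo-null. You flag the needed ``no such torsion'' hypothesis in passing but never verify it on either side — and in fact, for $X_{{\rm Gr},0}(K,\mathbf{A})$ it is not established (nor needed).

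The actual mechanism that saves the argument in the paper is a cancellation of correction factors, not their absence. On the $X$-side Rubin's lemma contributes a factor $\mathfrak{D}$. On the $Z$-side, where $Z(K_\infty)={\rm Sel}_{{\rm Gr},\emptyset}(K,\mathbf{T})/(\mathcal{BF})$, a snake-lemma argument combined with Lemma~\ref{lem:no-tors} forces $Z(K_\infty)[I^{\rm cyc}]$ to be simultaneously torsion-free and torsion, hence zero, so Rubin's lemma gives no correction from $Z(K_\infty)$ itself; but the comparison map $Z(K_\infty)/I^{\rm cyc}Z(K_\infty)\rightarrow Z(K_\infty^\ac)$ is injective with cokernel of characteristic ideal \emph{exactly} $\mathfrak{D}$ (this is the content of the \cite[Prop.~2.4.15]{AHsplit}-style argument). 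Thus both sides pick up the same nontrivial factor $\mathfrak{D}$, and the desired equality follows by cancellation. Your proposal cannot succeed as written precisely because it insists on pseudo-null error terms — which would make cancellation unnecessary but is not what actually happens — rather than tracking equal error terms and exploiting their cancellation.
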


\begin{proof}
Of course, this follows from descending 
from $K_\infty$ to $K_\infty^\ac$. 
Let $\gamma^{\rm cyc}\in\Gamma^{\rm cyc}$ be a topological generator, and let $I^{\rm cyc}$ be the principal
ideal $(\gamma^{\rm cyc}-1)\Lambda\subseteq\Lambda$. Then by \cite[Prop.~3.9]{SU} (with
the roles of the cyclotomic and anticyclotomic $\bZ_p$-extensions reversed) we have
\[
X_{{\rm Gr},0}(K,\mathbf{A})/I^{\rm cyc}X_{{\rm Gr},0}(K,\mathbf{A})\simeq
X_{{\rm Gr},0}(K,\Ac).
\]
In particular, $X_{{\rm Gr},0}(K,\Ac)$ is $\Lambda^\ac$-torsion
and by \cite[Lemma~6.2(ii)]{rubin-IMC} it follows that
\begin{equation}\label{eq:3.9}
Ch_{\Lambda^\ac}(X_{{\rm Gr},0}(K,\Ac))
=Ch_{\Lambda}(X_{{\rm Gr},0}(K,\mathbf{A}))\cdot\mathfrak{D},
\end{equation}
where $\mathfrak{D}:=Ch_{\Lambda^\ac}(X_{{\rm Gr},0}(K,\mathbf{A})[I^{\rm cyc}])$.
On the other hand, set
\[
Z(K_\infty):={\rm Sel}_{{\rm Gr},\emptyset}(K,\mathbf{T})/(\mathcal{BF}),
\quad
Z(K_\infty^\ac):={\rm Sel}_{{\rm Gr},\emptyset}(K,\Tc)/(\mathcal{BF}^\ac).
\]
Using the fact that $I^{\rm cyc}$ is principal,
a straightforward application of Snake's Lemma yields the exactness of
\begin{equation}\label{eq:snake}
{\rm Sel}_{{\rm Gr},\emptyset}(K,\mathbf{T})[I^{\rm cyc}]\longrightarrow
Z(K_\infty)[I^{\rm cyc}]\longrightarrow(\mathcal{BF})/I^{\rm cyc}(\mathcal{BF}).
\end{equation}
Arguing as in the proof of \cite[Prop.~2.4.15]{AHsplit} we see that the natural
$\Lambda^\ac$-module map
\[
Z(K_\infty)/I^{\rm cyc}Z(K_\infty)\longrightarrow Z(K_\infty^\ac)
\]
is injective with cokernel having characteristic ideal $\mathfrak{D}$, and hence
\begin{equation}\label{eq:2.4.5}
Ch_{\Lambda^\ac}(Z(K_\infty^\ac))=Ch_{\Lambda^\ac}(Z(K_\infty)/I^{\rm cyc}Z(K_\infty))\cdot\mathfrak{D}.
\end{equation}

Since ${\rm Sel}_{{\rm Gr},\emptyset}(K,\mathbf{T})$ has no $\Lambda$-torsion by Lemma~\ref{lem:no-tors},
the leftmost term in $(\ref{eq:snake})$ vanishes; on the other hand
the rightmost term is clearly torsion-free, and hence $Z(K_\infty)[I^{\rm cyc}]$ is torsion-free.
Since \cite[Lem.~6.2(i)]{rubin-IMC} and equality $(\ref{eq:2.4.5})$ imply
that $Z(K_\infty)[I^{\rm cyc}]$ is also a torsion $\Lambda^\ac$-module
(using the nonvanishing of the terms in that equality), we conclude that $Z(K_\infty)[I^{\rm cyc}]=0$,
and by \cite[Lem.~6.2(ii)]{rubin-IMC} it follows that
\begin{equation}\label{eq:6.2}
Ch_{\Lambda}(Z(K_\infty))\cdot\Lambda^\ac
=Ch_{\Lambda^\ac}(Z(K_\infty)/I^{\rm cyc}Z(K_\infty)).
\end{equation}
Combined with $(\ref{eq:3.9})$, 
we thus arrive at
\begin{equation}
\begin{split}\label{eq:Z}
Ch_{\Lambda^\ac}(X_{{\rm Gr},0}(K,\Ac))
&=Ch_{\Lambda}(X_{{\rm Gr},0}(K,\mathbf{A}))\cdot\mathfrak{D}\\
&=Ch_{\Lambda}(Z(K_\infty))\cdot\mathfrak{D}\\
&=Ch_{\Lambda^\ac}(Z(K_\infty^\ac)),
\end{split}\nonumber
\end{equation}
using $(\ref{eq:2.4.5})$ and $(\ref{eq:6.2})$ for the last equality. This completes the proof.
\end{proof}

\section{$\Lambda$-adic Gross--Zagier formula}

Throughout this section, we let $E/\bQ$ be an elliptic curve of conductor $N$,
$f=\sum_{n=1}^\infty a_n(f)q^n$ 
be the normalized newform of weight $2$ associated with $E$,
and $p\nmid 6N$ be a prime of ordinary reduction for $E$.
We also let $K/\bQ$ be an imaginary quadratic field of discriminant prime to $Np$ satisfying
the generalized Heegner hypothesis (Heeg) in the Introduction and such that $p=\pp\overline{\pp}$ splits in $K$.

\subsection{Heegner point main conjecture}


Recall that for every integer $m\geqslant 1$ we let $K[m]$ be the
ring class field of $K$ of conductor $m$. In particular, $K[1]$ is the Hilbert class field of $K$.

\begin{prop}\label{prop:HP}
There exists a collection of Heegner points $z_{f,p^n}\in E(K[p^n])\otimes_{\bZ}\bZ_p$
satisfying the norm-compatibility relations
\begin{equation}\label{eq:HP-compat}
a_p(f)\cdot z_{f,p^{n-1}}=
\left\{
\begin{array}{ll}
z_{f,p^{n-2}}+{\rm Norm}^{K[p^n]}_{K[p^{n-1}]}(z_{f,p^n})&\textrm{if $n>1$;}\\
\sigma_\pp z_{f,1}+\sigma_{\overline\pp} z_{f,1}+w_K\cdot{\rm Norm}^{K[p]}_{K[1]}(z_{f,p})&\textrm{if $n=1$},
\end{array}
\right.
\end{equation}
where $w_K=\vert\cO_K^\times\vert$, and $\sigma_\pp, \sigma_{\overline\pp}\in{\rm Gal}(K[1]/K)$
are the Frobenius elements of $\pp, \overline{\pp}$, respectively.
\end{prop}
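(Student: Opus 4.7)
The plan is to construct $z_{f,p^n}$ from classical Heegner divisors on the Shimura curve $X_{N^+,N^-}$ and then verify the norm-compatibility by unwinding the Hecke relation $T_p x = \sum (\text{index-}p\text{ sublattices})$ at the level of CM points.

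First I would realize the points geometrically. Since $(N^-,D_K)=1$ and (Heeg) holds, the order $\cO_{p^n}:=\bZ+p^n\cO_K$ admits an optimal embedding into the Eichler order of level $N^+$ inside the indefinite quaternion algebra $B$ of discriminant $N^-$; this embedding gives a CM point $x_{p^n}\in X_{N^+,N^-}(\bC)$. By Shimura's reciprocity law (the quaternionic form is due to Shimura/Nekov\'a\v r), the theory of complex multiplication places $x_{p^n}$ in $X_{N^+,N^-}(K[p^n])$, with $\Gal(K[p^n]/K[p^{n-1}])$ permuting the CM points of conductor $p^n$ above $x_{p^{n-1}}$ transitively. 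Passing to a degree-zero divisor (via the Hodge class on $X_{N^+,N^-}$, which has no cusps when $N^-\neq 1$) and applying the modular parametrization $\Phi_E$ yields a point $z_{f,p^n}^\circ\in E(K[p^n])$; after tensoring with $\bZ_p$ (so that any deficiency of the Hodge class by a bounded denominator is inverted), set $z_{f,p^n}:=z_{f,p^n}^\circ\otimes 1$.

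Next I would derive the recursion from the Hecke correspondence $T_p$ on $X_{N^+,N^-}$. The classical sublattice computation on CM points (see \cite{PR89}, adapted to the quaternionic setting as in \cite{howard-PhD-I}) gives for $n\geqslant 2$ the identity of divisors
\[
T_p\cdot x_{p^{n-1}}\;=\;{\rm Tr}^{K[p^n]}_{K[p^{n-1}]}(x_{p^n})\;+\;x_{p^{n-2}}
\]
in $\Div(X_{N^+,N^-})\otimes\bZ_p$, reflecting the fact that among the $p+1$ sublattices of index $p$ in $\cO_{p^{n-1}}$, exactly one (the ``good'' one) gives an order of conductor $p^{n-2}$ and the remaining $p$ give conductors $p^n$ and are permuted simply transitively by $\Gal(K[p^n]/K[p^{n-1}])$. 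Applying $\Phi_E$ and using that $T_p$ acts as $a_p(f)$ on the $f$-isotypic quotient cut out by $\Phi_E$ gives the first line of \rm{(\ref{eq:HP-compat})}. The case $n=1$ is handled by the analogous computation at the bottom of the tower: among the sublattices of index $p$ in $\cO_K$, one is $\pp\cO_K$ and one is $\overline{\pp}\cO_K$—which after applying the Artin map contribute $\sigma_\pp x_{f,1}$ and $\sigma_{\overline\pp}x_{f,1}$ respectively—while the remaining $p-1$ give CM points of conductor $p$, grouped into orbits of size $(p-1)/w_K$ under $\cO_K^\times$ acting through the reciprocity map, producing the factor $w_K$ in front of ${\rm Norm}^{K[p]}_{K[1]}(z_{f,p})$.

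The main obstacle in executing this plan is precisely the bookkeeping at $n=1$: one must track carefully how the action of $\cO_K^\times$ (whose image in $\Gal(K[p]/K[1])$ is trivial, unlike at higher levels where the unit group injects) alters the multiplicity with which each CM point of conductor $p$ appears in $T_p\cdot x_{f,1}$, and one must also confirm that the two ``boundary'' sublattices indeed give back $x_{f,1}$ twisted by $\sigma_\pp$ and $\sigma_{\overline\pp}$ rather than some other Galois translate. Both points are classical and follow from the description of the Artin reciprocity map on idelic representatives of $\pp$ and $\overline\pp$, combined with the fact that $x_{f,1}$ is fixed by $\cO_K^\times$ acting on its CM structure; this is the origin of the asymmetric $n=1$ relation. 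Once these refinements are in place, the norm-compatibility for all $n\geqslant 1$ reduces to the divisorial identities above.
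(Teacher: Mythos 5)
Your overall plan is sound and leads to the same statement, but you diverge from the paper at exactly the step where the curve's lack of cusps must be handled, and that divergence carries a real (if small) technical risk. The paper, after citing \cite[Prop.~1.2.1]{howard-PhD-II} for the CM points $h_{p^n}\in X(K[p^n])$ and their $T_p$-relations, produces degree-zero divisors not via the Hodge class but by the auxiliary prime device: it chooses $\ell\nmid N$ with $\ell+1-a_\ell(f)\in\bZ_p^\times$ and sets $z_{f,p^n}=\Phi_E\bigl((\ell+1-a_\ell(f))^{-1}(\ell+1-T_\ell)h_{p^n}\bigr)$. Because $T_\ell$ has degree $\ell+1$, the divisor $(\ell+1-T_\ell)h_{p^n}$ is already of degree zero with no division, and the normalizing scalar $(\ell+1-a_\ell(f))^{-1}$ is a $p$-adic unit by construction; the Hecke-commutation $T_p(\ell+1-T_\ell)=(\ell+1-T_\ell)T_p$ then transports Howard's relations from $h_{p^n}$ to $z_{f,p^n}$ verbatim. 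The paper explicitly flags your route as the fallback alternative (its footnote: ``without this assumption, one may use a certain Hodge class''), and there is a reason it prefers the $\ell$-trick: subtracting the Hodge class $\xi$ requires dividing by its degree, and your parenthetical ``any deficiency of the Hodge class by a bounded denominator is inverted'' is only licensed after tensoring with $\bZ_p$ \emph{if that denominator is prime to $p$}, which you do not verify. The auxiliary-prime construction renders the point moot.

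Two smaller observations. First, where the paper simply cites \cite[Prop.~1.2.1]{howard-PhD-II} for the $T_p$-identity on CM points, you redo the sublattice bookkeeping from scratch; this is the content of Howard's proposition and is fine, but your description of the $n=1$ combinatorics looks off by a factor of two. With $p$ split, $[K[p]:K[1]]=(p-1)/[\cO_K^\times:\cO_p^\times]=2(p-1)/w_K$ in the paper's normalization $w_K=\vert\cO_K^\times\vert$, so the $p-1$ ``non-boundary'' sublattices fall into Galois orbits repeated with multiplicity $w_K/2$, not $w_K$, under the orbit count you wrote; be careful to match whichever convention Howard uses so that the displayed coefficient $w_K$ comes out correctly. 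Second, your appeal to Zhang's uniformization to get $\Phi_E$ after a $\bQ$-isogeny is correct and matches the paper.
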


\begin{proof}
This is standard, but for our later reference we briefly recall the construction, referring the reader
to \cite[\S{1.2}]{howard-PhD-II}) and the references therein for further details. Let $X:=X_{N^+,N^-}$
be the Shimura curve attached to a quaternion algebra $B/\bQ$ of discriminant $N^-$ equipped with
the $\Gamma_0(N^+)$-level structure defined be an Eichler order of level $N^+$.
By \cite[Prop.~1.2.1]{howard-PhD-II}, there exists a collection of CM points $h_{p^n}\in X(K[p^n])$
satisfying the norm relations $(\ref{eq:HP-compat})$ with the Hecke correspondence
$T_p$ in place of $a_p(f)$.

By 
a construction due to S.-W. Zhang \cite{zhang-153}
extending a classical result of Shimura, after possibly replacing $E$ by a $\bQ$-isogenous elliptic curve we
may fix a parametrization
\[
\Phi_E:{\rm Jac}(X)\longrightarrow E
\]
defined over $\bQ$. 
If $N^-\neq 1$, the curve $X$ has no cusps, and
an auxiliary choice is necessary in order to embed $X$ into ${\rm Jac}(X)$.
To that end, we fix a prime $\ell\nmid N$ such that $\ell+1-a_\ell(f)$
is a $p$-adic unit,\footnote{In our application, the existence of such a prime $\ell$ will be guaranteed
by a certain big image hypothesis; without this assumption, one may use a certain \emph{Hodge class}
to exhibit an embedding $X\rightarrow{\rm Jac}(X)$.} and define $z_{f,p^n}\in E(K[p^n])\otimes_{\bZ}\bZ_p$
be the image of the degree zero divisor $(\ell+1-a_\ell(f))^{-1}(\ell+1-T_\ell)h_{p^n}$
under the map ${\rm Jac}(X)(K[p^n])\rightarrow E(K[p^n])$ induced by $\Phi_E$.
\end{proof}

Now let $\alpha$ be the unit root of
$x^2-a_p(f)x+p$, and define the \emph{regularized Heegner points}
$z_{f,p^n,\alpha}\in E(K[p^n])\otimes_{\bZ}\bZ_p$ by
\[
z_{f,p^n,\alpha}:=
\left\{
\begin{array}{ll}
z_{f,p^n}-\frac{1}{\alpha}z_{f,p^{n-1}}&\textrm{if $n>0$;}\\
\frac{1}{w_K}\left(1-\frac{\sigma_\pp}{\alpha}\right)\left(1-\frac{\sigma_{\overline{\pp}}}{\alpha}\right)z_{f,1}&\textrm{if $n=0$.}
\end{array}
\right.
\]
By Proposition~\ref{prop:HP}, we then have
\begin{equation}\label{eq:comp}
{\rm Norm}^{K[p^n]}_{K[p^{n-1}]}(z_{f,p^n,\alpha})=\alpha\cdot z_{f,p^{n-1},\alpha}\nonumber
\end{equation}
for all $n\geqslant 1$. Letting $\mathbf{z}_{f,p^n,\alpha}$
be the image of the point $z_{f,p^n,\alpha}$ under the Kummer map
\[
E(K[p^n])\otimes_{\bZ}\bZ_p\longrightarrow
H^1(K[p^n],T_pE),
\]
the classes $\alpha^{-n}\cdot\mathbf{z}_{f,p^n,\alpha}$ are then compatible
under corestriction, thus defining a class
\[
\mathbf{z}_{f}\in H^1(K,\mathbf{T}^\ac)
\]
which 
lands in the Selmer group ${\rm Sel}(K,\mathbf{T}^\ac)$.

\begin{conj}
\label{HP-MC}
Both ${\rm Sel}(K,\Tc)$ and $X(K,\Ac)$ have $\Lambda^\ac$-rank $1$, and
\[
Ch_{\Lambda^\ac}(X(K,\Ac)_{\rm tors})=
Ch_{\Lambda^\ac}\biggl(\frac{{\rm Sel}(K,\Tc)}{\Lambda^\ac\cdot\mathbf{z}_f}\biggr)^2
\]
as ideals in $\Lambda^\ac\otimes_{\bZ_p}\bQ_p$.
\end{conj}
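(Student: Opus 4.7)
The plan is to prove the conjecture by establishing the two divisibilities separately, drawing on the Euler systems of Heegner points and of Beilinson--Flach classes available in this paper.

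\emph{First divisibility (Heegner side).} I would follow Howard's strategy from \cite{howard-PhD-I}: enlarge the collection $\{z_{f,p^n,\alpha}\}$ from Proposition~\ref{prop:HP} to a full anticyclotomic Euler system by introducing CM points on $X_{N^+,N^-}$ of conductor $cp^n$ for square-free products $c$ of primes inert in $K$ (``Kolyvagin primes''). Applying the anticyclotomic extension of Mazur--Rubin's Kolyvagin-system machinery \cite{MR-KS}, these derived classes produce a $\Lambda^\ac$-Kolyvagin system whose nontriviality is supplied by the Cornut--Vatsal equidistribution theorem. This yields the divisibility
\[
Ch_{\Lambda^\ac}(X(K,\Ac)_{\rm tors}) \,\supseteq\, Ch_{\Lambda^\ac}\!\biggl(\frac{{\rm Sel}(K,\Tc)}{\Lambda^\ac\cdot\mathbf{z}_f}\biggr)^{\!2},
\]
together with the rank equalities ${\rm rank}_{\Lambda^\ac}{\rm Sel}(K,\Tc)={\rm rank}_{\Lambda^\ac}X(K,\Ac)=1$.

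\emph{Second divisibility (Beilinson--Flach side).} For the reverse direction I would invoke the two-variable main conjecture. Granting the implication $(2)\Rightarrow(1)$ of Theorem~\ref{thm:2-varIMC} as an equality (which is essentially Wan's \cite{wan} theorem on the three-variable Rankin--Selberg IMC), descent to $\Lambda^\ac$ via Proposition~\ref{thm:str} yields
\[
Ch_{\Lambda^\ac}(X_{{\rm Gr},0}(K,\Ac))=Ch_{\Lambda^\ac}\!\biggl(\frac{{\rm Sel}_{{\rm Gr},\emptyset}(K,\Tc)}{\Lambda^\ac\cdot\mathcal{BF}^\ac}\biggr),
\]
and Lemma~\ref{lem:str-rel} identifies $X_{{\rm Gr},0}(K,\Ac)_{\rm tors}$ with $X(K,\Ac)_{\rm tors}$. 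The last step is to compare $\mathcal{BF}^\ac$ with $\mathbf{z}_f$: by Theorem~\ref{thm:Col} the $\pp$-component of $\mathcal{BF}^\ac$ computes $L_p^\ac(f/K,\Sigma^{(2)})$, which by Corollary~\ref{cor:wan-bdp} factors as $(\mathscr{L}_\pp^{\tt BDP}(f/K))^2$; on the Heegner side, the BDP explicit reciprocity law relates $\mathscr{L}_\pp^{\tt BDP}(f/K)$ to the $\pp$-Bloch--Kato logarithm of $\mathbf{z}_f$. This quadratic matching is precisely what produces the \emph{square} on the right-hand side of the main conjecture and delivers the reverse divisibility.

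\emph{Main obstacle.} The hard part is the Beilinson--Flach divisibility. Beyond the input of Wan's deep three-variable main conjecture, one must carefully manage the congruence ideal $I_{\mathbf{g}}$ of the CM Hida family $\mathbf{g}$ introduced in the proof of Theorem~\ref{thm:Col}: its size is pinned down by Hida--Tilouine duality in conjunction with Rubin's IMC for $K$ (both invoked inside the proof of Theorem~\ref{thm:2-varIMC}), and this is what allows one to extract the correct power of the Katz $p$-adic $L$-function from the anticyclotomic BDP factorization. A further subtle issue is controlling pseudo-null contributions during the descent from $\Lambda$ to $\Lambda^\ac$, so that the final identity holds as ideals of $\Lambda^\ac\otimes_{\bZ_p}\bQ_p$ rather than only after inverting some auxiliary element.
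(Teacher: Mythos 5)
The first divisibility is handled correctly and is exactly the paper's input (Theorem~\ref{thm:KS-argument}, Howard's anticyclotomic Euler system of Heegner points).

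The second divisibility as you propose it is circular. You want to ``grant the implication $(2)\Rightarrow(1)$ of Theorem~\ref{thm:2-varIMC} as an equality'' and attribute that equality to Wan, but Wan's result (Theorem~\ref{thm:wan-div}, from \cite{wanIMC}) gives only the divisibility $Ch_{\Lambda_{\unr}}(X_{\emptyset,0}(K,\mathbf{A})_{\unr})\subseteq(\mathscr{L}_\pp(f/K))$, not the reverse containment. In the paper, the reverse containment, and hence the \emph{equality} in Theorem~\ref{thm:2-varIMC}(2), is established in Corollary~\ref{cor:two-varIMC}, whose proof explicitly relies on Theorem~\ref{thm:HP-MC} — the very statement you are trying to prove. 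So you cannot take the two-variable equality as an input to deduce the Heegner-point main conjecture; the logical flow in the paper is the reverse.

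The paper's actual route for the second direction does not go through Beilinson--Flach elements at all. It converts Howard's divisibility into a divisibility between $Ch_{\Lambda_{\unr}^\ac}(X_{\emptyset,0}(K,\Ac)_{\unr})$ and $(\mathscr{L}_\pp^{\tt BDP}(f/K)^2)$ using the BDP explicit reciprocity law for Heegner points (Theorem~\ref{thm:cas-hsieh}) together with the Poitou--Tate bookkeeping of Lemmas~\ref{lem:cas} and \ref{lem:tors}; it then confronts this with Wan's single divisibility after a control theorem and Corollary~\ref{cor:wan-bdp}, uses the $\mu=0$ theorem of Burungale--Hsieh (Theorem~\ref{thm:mu-zero}) to promote ``equality up to powers of $p$'' to an integral equality for $X_{\emptyset,0}$, and then unwinds the two lemmas again to force equality in Howard's inequality. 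The square in the Heegner-point main conjecture comes out of this bookkeeping (the factor ${\rm coker}({\rm loc}_\pp)$ appears once in Lemma~\ref{lem:cas} and twice in Lemma~\ref{lem:tors}), not out of a direct comparison of $\mathcal{BF}^\ac$ with $\mathbf{z}_f$.

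Two smaller inaccuracies in your sketch: Lemma~\ref{lem:str-rel}(3) compares $X_{{\rm Gr},\emptyset}$ with $X_{{\rm Gr},0}$, not $X_{{\rm Gr},0}$ with $X(K,\Ac)$; the actual relation between $Ch_{\Lambda^\ac}(X_{{\rm Gr},0}(K,\Ac))$ and $\mathcal{X}_{\rm tors}=Ch_{\Lambda^\ac}(X_{\Gr}(K,\Ac)_{\rm tors})$ picks up the nontrivial local factor $\eta^\iota$ of equation (\ref{eq:str-rel}), so they are not ``identified''. And the proposed comparison of $\mathcal{BF}^\ac$ with $\mathbf{z}_f$ via the local reciprocity laws at $\pp$ only pins down their ratio up to precisely the sort of local error terms that the paper's argument through $X_{\emptyset,0}$ is designed to control systematically; as stated it is too loose to deliver an equality of characteristic ideals.
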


\begin{rem}
When $N^-=1$ (so that $X=X_{N^+,N^-}$ is just the classical modular curve $X_0(N^+)$), an integral form of
Conjecture~\ref{HP-MC} incorporating the Manin constant associated to the modular
parametrization $\Phi_E$ was originally formulated by Perrin-Riou \cite[Conj.~B]{PR-HP}.
In the above form, Conjecture~\ref{HP-MC} is the case $F=\bQ$ of a
conjecture formulated by Howard \cite[p.3]{howard-PhD-II}.
\end{rem}


Thanks to the work of several authors, Conjecture~\ref{HP-MC} is now
known under mild hypotheses. 

\begin{thm}\label{thm:HP-MC}
Let $E/\bQ$ be an elliptic curve of conductor $N$ with good ordinary reduction at $p\geqslant 5$,
and let $K$ be an imaginary quadratic field of discriminant prime to $N$ satisfying hypothesis ${(\rm Heeg)}$.
Assume in addition that:
\begin{itemize}
\item{} $N$ is square-free,
\item{} $N^-\neq 1$,
\item{} $E[p]$ is ramified at every prime $\ell\mid N^-$,
\item{} ${\rm Gal}(\overline{\bQ}/K)\rightarrow{\rm Aut}_{\bZ_p}(T_pE)$ is surjective.
\end{itemize}
Then:
\begin{enumerate}
\item{} Conjecture~\ref{HP-MC} holds.
\item{} $X_{\emptyset,0}(K,\Ac)$ is $\Lambda^\ac$-torsion, and
\[
Ch_{\Lambda^\ac_{\unr}}(X_{\emptyset,0}(K,\Ac)_{\unr})=(\mathscr{L}_\pp^{\tt BDP}(f/K)^2)
\]
as ideals in $\Lambda^\ac_{\unr}$.
\end{enumerate}
\end{thm}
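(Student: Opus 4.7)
For part (1), I invoke the two now-established divisibilities of the Heegner point main conjecture. The divisibility
\[
Ch_{\Lambda^\ac}(X(K,\Ac)_{\rm tors}) \supseteq Ch_{\Lambda^\ac}({\rm Sel}(K,\Tc)/\Lambda^\ac\cdot\mathbf{z}_f)^2
\]
is due to Howard \cite{howard-PhD-I,howard-PhD-II}, built from the anticyclotomic Kolyvagin systems of \cite{MR-KS} and the Cornut--Vatsal nontriviality theorem \cite{CV-doc}, and extended to the indefinite Shimura curves $X_{N^+,N^-}$ with $N^-\neq 1$ under the stated big-image and ramification hypotheses; the converse divisibility is Wan's theorem \cite{wan}, proved under precisely these hypotheses. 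Combining the two gives (1).

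For part (2), the plan is to combine (1) with the Beilinson--Flach explicit reciprocity law of Theorem~\ref{thm:Col} to compute the characteristic ideal of $X_{\emptyset,0}(K,\Ac)$. The key point is that by Corollary~\ref{cor:str-Gr} the anticyclotomic class $\mathcal{BF}^\ac$ lies in ${\rm Sel}(K,\Tc)$, which by (1) together with Lemma~\ref{lem:str-rel}(1) and Lemma~\ref{lem:no-tors} is $\Lambda^\ac$-free of rank one; hence $\mathcal{BF}^\ac$ and $\mathbf{z}_f$ differ by a scalar $\lambda \in {\rm Frac}(\Lambda^\ac)$. This scalar is pinned down in terms of $\mathscr{L}_\pp^{\tt BDP}(f/K)$ by applying ${\rm Col}^{(2)}\circ{\rm loc}_\pp$ (Theorem~\ref{thm:Col}), using the factorization $\mathscr{L}_\pp^\ac(f/K) \sim \mathscr{L}_\pp^{\tt BDP}(f/K)^2$ of Corollary~\ref{cor:wan-bdp}, and absorbing the anticyclotomic Katz factor $\mathscr{L}_\pp^\ac(K)$ as a unit in $\Lambda^\ac_{\unr}$ via Rubin's proof of the Iwasawa main conjecture for $K$ \cite{rubin-IMC}. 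Combined with the Poitou--Tate exact sequence
\[
0 \longrightarrow {\rm Sel}_{{\rm Gr},\emptyset}(K,\Tc) \longrightarrow H^1_{\rm Gr}(K_\pp,\Tc) \longrightarrow X_{\emptyset,0}(K,\Ac) \longrightarrow X_{{\rm Gr},0}(K,\Ac) \longrightarrow 0
\]
(holding once ${\rm Sel}_{0,\emptyset}(K,\Tc)$ is seen to vanish, analogously to the proof of Theorem~\ref{thm:2-varIMC}) and Lemma~\ref{lem:str-rel}(3) to relate $X_{{\rm Gr},0}$ to $X_{{\rm Gr}}$, one then reads off the claimed equality $Ch_{\Lambda^\ac_{\unr}}(X_{\emptyset,0}(K,\Ac)_{\unr}) = (\mathscr{L}_\pp^{\tt BDP}(f/K)^2)$.

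The main technical obstacle is the careful Poitou--Tate bookkeeping: one must track the characteristic ideals contributed by each term in the nine-term exact sequence, correctly match the HP-MC of (1) against the Beilinson--Flach side via the relation $\mathcal{BF}^\ac = \lambda\cdot\mathbf{z}_f$, and verify that the pseudo-null cokernels from Theorem~\ref{thm:Col} do not contribute spurious factors. A potentially cleaner route, reflecting the two-variable architecture of Section~\ref{sec:Iw}, is to first establish the full two-variable equality of Theorem~\ref{thm:2-varIMC}(1) (bootstrapping from the anticyclotomic HP-MC using the cyclotomic variation of the Beilinson--Flach class), then apply its equivalence with (2) to obtain $Ch_{\Lambda_{\unr}}(X_{\emptyset,0}(K,\mathbf{A})_{\unr}) = (\mathscr{L}_\pp(f/K))$ in $\Lambda_{\unr}[1/P]$, and finally descend via Proposition~\ref{thm:str} (or its $X_{\emptyset,0}$-analogue) together with Corollary~\ref{cor:wan-bdp}.
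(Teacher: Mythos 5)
The proposal captures the broad architecture—Howard's divisibility, Wan's divisibility, Poitou--Tate bookkeeping—but diverges from the paper's actual proof in one essential respect and has two genuine gaps.

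The essential divergence: the paper \emph{never} invokes the Beilinson--Flach reciprocity law (Theorem~\ref{thm:Col}) in the proof of Theorem~\ref{thm:HP-MC}. The bridge between the HP-MC picture and the $X_{\emptyset,0}$-IMC picture is instead provided by Theorem~\ref{thm:cas-hsieh}, the Heegner-point explicit reciprocity law of \cite{cas-hsieh1}, which says $\mathcal{L}_+({\rm res}_\pp(\mathbf{z}_f)) = -\mathscr{L}_\pp^{\tt BDP}(f/K)\cdot\sigma_{-1,\pp}$ with $\mathcal{L}_+$ an injection with \emph{finite} cokernel. This directly controls ${\rm loc}_\pp(\mathbf{z}_f)$ and feeds Lemmas~\ref{thm:ES}, \ref{lem:cas} and \ref{lem:tors}, which together give the clean length identity relating $X_{\emptyset,0}$, $X_{\rm tors}$, ${\rm coker}({\rm loc}_\pp)$, ${\rm Sel}/\mathbf{z}_f$ and ${\rm ord}(\mathscr{L}^{\tt BDP})$ at every height-one prime. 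Your route through $\mathcal{BF}^\ac$ and ${\rm Col}^{(2)}$ is a detour: ${\rm Col}^{(2)}({\rm loc}_\pp(\mathcal{BF}^\ac))$ computes $L_p^\ac(f/K,\Sigma^{(2)})$, not anything about $\mathbf{z}_f$; to "pin down" the scalar $\lambda$ with $\mathcal{BF}^\ac = \lambda\cdot\mathbf{z}_f$ you would still need an independent handle on ${\rm loc}_\pp(\mathbf{z}_f)$, i.e.\ Theorem~\ref{thm:cas-hsieh} anyway. Moreover, ${\rm Col}^{(2)}$ lands in $I_\mathbf{g}\hat\otimes\Lambda_{\rm cyc}$, and the way the paper handles the congruence ideal $I_\mathbf{g}$ (in the proof of Theorem~\ref{thm:2-varIMC}, not Theorem~\ref{thm:HP-MC}) is by \emph{cancelling} it against $h_K\cdot\mathscr{L}_\pp^{\rm ac}(K)$ via Hida--Tilouine and Rubin's IMC—the Katz factor is not being "absorbed as a unit," it is nonunital and matches $I_\mathbf{g}^{-1}$ up to powers of $P$.

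The two genuine gaps: first, the statement of part (2) is an equality of ideals in $\Lambda^\ac_{\unr}$, but Wan's input (Theorem~\ref{thm:wan-div}) is only a divisibility in $\Lambda_{\unr}\otimes_{\bZ_p}\bQ_p$. You never invoke the vanishing of $\mu(\mathscr{L}^{\tt BDP}_\pp(f/K))$ (Theorem~\ref{thm:mu-zero}, due to Burungale and Hsieh), which is precisely what the paper uses to upgrade the equality from "up to powers of $p\Lambda^\ac_{\unr}$" to an integral one. This cannot be omitted; it is one of the stated hypotheses ($E[p]$ ramified at $\ell\mid N^-$) doing real work. Second, you treat (1) as already settled by "Howard plus Wan" and derive (2) from it, whereas the paper establishes (1) and (2) \emph{simultaneously}: Howard's divisibility yields the lower bound $Ch(X_{\emptyset,0})\supseteq(\mathscr{L}^{\tt BDP})^2$, Wan's result via the control theorem and Corollary~\ref{cor:wan-bdp} gives the upper bound, $\mu=0$ closes the integral gap, and equality is then propagated \emph{back} through Lemmas~\ref{lem:cas} and \ref{lem:tors} to obtain (1). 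Citing \cite{wan} for the converse HP-MC divisibility as a black box short-circuits the logical structure, and your proof of (2) from (1) is then both more roundabout and incomplete, for the reasons above.
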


\begin{proof}
This follows by combining the works
of Howard \cite{howard-PhD-I, howard-PhD-II} and Wan \cite{wanIMC},
and the link between the tow provided by (the weight $2$ case of)
the explicit reciprocity law of \cite{cas-hsieh1} and Poitou--Tate duality.
The details are given in the Appendix to this paper.
\end{proof}

\begin{cor}\label{cor:two-varIMC}
Under the hypotheses of Theorem~\ref{thm:HP-MC}, the following hold:
\begin{enumerate}
\item{} $X_{{\rm Gr},0}(K,\mathbf{A})$ is $\Lambda$-torsion,
${\rm Sel}_{{\rm Gr},\emptyset}(K,\mathbf{T})$ has $\Lambda$-rank $1$, and
\[
Ch_{\Lambda}(X_{{\rm Gr},0}(K,\mathbf{A}))=
Ch_\Lambda\bigg(\frac{{\rm Sel}_{{\rm Gr},\emptyset}(K,\mathbf{T})}{\Lambda\cdot\mathcal{BF}}\biggr).
\]
\item{} $X_{\emptyset,0}(K,\mathbf{A})$ is $\Lambda$-torsion, and
\[
Ch_{\Lambda_{\unr}}(X_{\emptyset,0}(K,\mathbf{A})_{\unr})=(\mathscr{L}_\pp(f/K)).
\]
\end{enumerate}
\end{cor}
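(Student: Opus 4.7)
The plan is to deduce Corollary~\ref{cor:two-varIMC} from Theorem~\ref{thm:HP-MC} via the equivalence in Theorem~\ref{thm:2-varIMC}. Since that equivalence identifies statements (1) and (2) as ideals in $\Lambda[1/P]$, it suffices to prove one of them; I would aim at (2), and then recover (1) from the implication $(2)\Rightarrow(1)$.

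To establish (2), I would begin from the anticyclotomic main conjecture provided by Theorem~\ref{thm:HP-MC}(2), namely
\[
Ch_{\Lambda^\ac_{\unr}}(X_{\emptyset,0}(K,\Ac)_{\unr})=(\mathscr{L}_\pp^{\tt BDP}(f/K)^2).
\]
By Corollary~\ref{cor:wan-bdp}, the right-hand side coincides, up to a unit in $(\Lambda_{\unr}^\ac)^\times$, with the ideal generated by the anticyclotomic projection $\mathscr{L}_\pp^\ac(f/K)$ of $\mathscr{L}_\pp(f/K)$. The remaining task is to lift this equality from $\Lambda_{\unr}^\ac$ to $\Lambda_{\unr}$. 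For this I would combine two ingredients: first, a control theorem of the form
\[
X_{\emptyset,0}(K,\mathbf{A})/I^{\rm cyc}X_{\emptyset,0}(K,\mathbf{A})\;\sim\;X_{\emptyset,0}(K,\Ac),
\]
where $I^{\rm cyc}=(\gamma^{\rm cyc}-1)\Lambda$ and $\sim$ denotes agreement up to pseudo-null modules, which is a Poitou--Tate computation of the kind already carried out in the proof of Proposition~\ref{thm:str}; second, a one-sided 2-variable divisibility of Euler-system origin, either via the BF classes of Theorem~\ref{thm:Col} or via Wan's work on the Rankin--Selberg IMC. Granted such a divisibility, the fact that it specializes to an \emph{equality} on the anticyclotomic line forces equality in $\Lambda_{\unr}[1/P]$ (by the standard characteristic ideal argument: a divisibility that becomes an equality at a height-one specialization must already be an equality), and then equality in $\Lambda_{\unr}$ after controlling the exceptional prime $P$.

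Once (2) is in hand, statement (1) follows directly from the implication $(2)\Rightarrow(1)$ in Theorem~\ref{thm:2-varIMC}: the 2-variable BF main conjecture holds, $X_{{\rm Gr},0}(K,\mathbf{A})$ is $\Lambda$-torsion, ${\rm Sel}_{{\rm Gr},\emptyset}(K,\mathbf{T})$ has $\Lambda$-rank one, and the claimed characteristic ideal identity with $\mathcal{BF}$ holds. The main obstacle in this strategy is the lifting step for (2): producing the necessary one-sided 2-variable divisibility requires external input (Wan's 2-variable IMC, or a Kolyvagin-style argument applied to the BF Euler system) and is genuinely nontrivial. By contrast, the descent from 2-variable to anticyclotomic is essentially routine, paralleling the proof of Proposition~\ref{thm:str}.
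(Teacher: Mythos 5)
Your strategy is essentially the one the paper follows: prove~(2) by combining the anticyclotomic equality of Theorem~\ref{thm:HP-MC}(2) (rewritten via Corollary~\ref{cor:wan-bdp}) with a two-variable divisibility from Wan and a control theorem, and then deduce~(1) from the implication $(2)\Rightarrow(1)$ of Theorem~\ref{thm:2-varIMC}. However, there is a genuine gap in your last step. Theorem~\ref{thm:2-varIMC} only delivers the equality in part~(1) as ideals in $\Lambda[1/P]$, where $P$ is the pullback of $(\gamma^\ac-1)$; the corollary asserts equality in $\Lambda$ itself. You cannot get this ``for free'' from the equivalence. The paper closes this gap by observing that, by Theorem~\ref{thm:Col} and Remark~\ref{rem:non0}, the Beilinson--Flach class $\mathcal{BF}$ restricts nontrivially to the cyclotomic line (i.e., its image modulo $P$ is nonzero), which guarantees that the relevant characteristic ideals are not supported at $P$ and lets one remove the localization. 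Your proposal does not address this point.

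Two further imprecisions in your argument for~(2): first, your claim that ``a divisibility that becomes an equality at a height-one specialization must already be an equality'' is not a correct general principle; the precise tool used in the paper is \cite[Lem.~3.2]{SU}, applied after establishing both the one-sided divisibility in $\Lambda_{\unr}$ \emph{and} the equality modulo $I^{\rm cyc}=(\gamma^{\rm cyc}-1)$ coming from the control theorem together with Corollary~\ref{cor:wan-bdp}. Second, to get the one-sided divisibility as ideals in $\Lambda_{\unr}$ (rather than merely in $\Lambda_{\unr}\otimes_{\bZ_p}\bQ_p$), one needs the vanishing of the $\mu$-invariant of $\mathscr{L}^{\tt BDP}_\pp(f/K)$ (Theorem~\ref{thm:mu-zero}); your sketch omits this input. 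These are quantitative rather than conceptual issues, but they are essential for the stated integrality of the characteristic ideal identities.
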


\begin{proof}
Part (2) is deduced from Theorem~\ref{thm:HP-MC} by the same argument as in
\cite[Thm.~5.1]{cas-wan-SS}; part (1) then follows from Theorem~\ref{thm:2-varIMC}.
The details, which involve many of the ingredients that go into the proof of Theorem~\ref{thm:HP-MC} is
relegated to the Appendix of this paper.
\end{proof}

\subsection{Rubin's height formula}\label{subsec:rubin}

We maintain the notations from $\S\ref{sec:anti-L}$, and continue to
denote by $L_p(f/K,\Sigma^{(1)})$ the image of the $p$-adic $L$-function
$L_p(f/K,\Sigma^{(1)})$ of Theorem~\ref{thm:hida} under the natural projection
\[
\bQ_p\otimes_{\ro}\ro[[H_{p^\infty}]]\longrightarrow\bQ_p\otimes_{\ro}\Lambda.
\]
Let $\gamma^{\cyc}\in\Gamma^\cyc$ be a topological generator, and using the identification
$\Lambda\simeq\Lambda^\ac[[\Gamma^\cyc]]$ expand
\begin{equation}\label{eq:expand}
L_p(f/K,\Sigma^{(1)})=L_{p,0}^{}(f/K)+L_{p,1}^{\rm cyc}(f/K)(\gamma^\cyc-1)+\cdots 
\end{equation}
as a power series in $(\gamma^\cyc-1)$. The `constant term' in this expansion
is just the anticyclotomic projection $L^\ac_{p}(f/K,\Sigma^{(1)})$, which
vanishes by Proposition~\ref{thm:hida-1}. 

In light of the isomorphism $(\ref{eq:Shapiro})$, the Beilinson--Flach class $\mathcal{BF}$ from Theorem~\ref{thm:Col}
can be seen a compatible system of classes
\[
\mathcal{BF}=\varprojlim_{K\subseteq F\subseteq K_\infty}\mathcal{BF}_F
\]
with $\mathcal{BF}_F\in{\rm Sel}_{{\rm Gr},\emptyset}(F,T)$, and where $F$ runs
over the finite extensions of $K$ contained in $K_\infty$. Let $L_n=K_n^\ac K_\infty^{\rm cyc}$, and set
$\mathcal{BF}(L_n):=\varprojlim_{K\subseteq F\subseteq L_n}\mathcal{BF}_F$.

Recall that for any free $\bZ_p$-module $M$ equipped with a linear action of $G_{\bQ_p}$,
and a $p$-adic Lie extension $E_\infty/\bQ_p$, one defines
\[
H^1_{\rm Iw}(E_\infty,M):=\varprojlim_{\bQ_p\subseteq E\subseteq E_\infty}H^1(E,M),
\]
where similarly as before, $E$ runs over the finite extensions of $\bQ_p$ contained in $E_\infty$.

\begin{lem}\label{lem:3.1.1}
For every $n\geqslant 0$ there is an element
\[
\beta_n\in H^1_{\rm Iw}(L_{n,\overline\pp},T)\otimes_{\bZ_p}\bQ_p,
\]
unique modulo the natural image of $H^1_{\rm Iw}(L_{n,\overline{\pp}},\mathscr{F}^+T)\otimes_{}\bQ_p$,
such that
\[
(\gamma^{\cyc}-1)\beta_n={\rm loc}_{\overline{\pp}}(\mathcal{BF}(L_n)).
\]
Furthermore, the natural images of $\beta_n$ in $H^1(K_{n,\overline\pp}^\ac,T)\otimes_{}\bQ_p/H^1_{\rm Gr}(K_{n,\overline\pp}^\ac,T)\otimes_{}\bQ_p$ define a class $\beta_\infty(\mathds{1})\in H^1(K_{\overline\pp},\Tc)\otimes{\bQ_p}/H^1_{\rm Gr}(K_{\overline\pp},\Tc)\otimes\bQ_p$, 
and the $\Lambda$-linear map ${\rm Col}^{(1)}$ of Theorem~\ref{thm:Col}
yields an identification
\[
\frac{H^1(K_{\overline{\pp}},\Tc)}{H^1_{\rm Gr}(K_{\overline\pp},\Tc)}\otimes_{\ro}\bQ_p
\simeq\Lambda^\ac\otimes_{\ro}\bQ_p
\]
sending $\beta_\infty(\mathds{1})$ to the `linear term' $L_{p,1}^{\cyc}(f/K)$
in $(\ref{eq:expand})$.
\end{lem}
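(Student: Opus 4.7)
Set $H_n := H^1_{\rm Iw}(L_{n,\overline{\pp}}, T) \otimes_{\bZ_p} \bQ_p$ and $H_n^{\pm} := H^1_{\rm Iw}(L_{n,\overline{\pp}}, \mathscr{F}^{\pm}T) \otimes_{\bZ_p} \bQ_p$, and denote by $\pi_n \colon H_n \to H_n^-$ and $\iota_n \colon H_n^+ \to H_n$ the natural maps coming from $0 \to \mathscr{F}^+T \to T \to \mathscr{F}^-T \to 0$. The relation $(\gamma^{\cyc}-1)\beta_n = {\rm loc}_{\overline{\pp}}(\mathcal{BF}(L_n))$ is to be read modulo $\iota_n(H_n^+)$, and really amounts to specifying $\pi_n(\beta_n) \in H_n^-$ satisfying $(\gamma^{\cyc}-1)\pi_n(\beta_n) = \pi_n({\rm loc}_{\overline{\pp}}(\mathcal{BF}(L_n)))$. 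The plan is therefore: establish this divisibility in $H_n^-$; lift to $H_n$ and check uniqueness; then match with $L_{p,1}^{\cyc}(f/K)$ under the anticyclotomic specialization of ${\rm Col}^{(1)}$.

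\emph{Divisibility in $H_n^-$.} Passing to the inverse limit, the image of $\pi_n({\rm loc}_{\overline{\pp}}(\mathcal{BF}(L_n)))$ becomes the natural image of ${\rm loc}_{\overline{\pp}}(\mathcal{BF})$ in $H^1(K_{\overline{\pp}}, \mathscr{F}^-\mathbf{T}) \otimes \bQ_p$, which by Theorem~\ref{thm:Col} is sent by the pseudo-null-cokernel injection ${\rm Col}^{(1)}$ to $L_p(f/K, \Sigma^{(1)}) \in \Lambda \otimes \bQ_p$. Proposition~\ref{thm:hida-1} gives $L_{p,0}(f/K) = L_p^\ac(f/K, \Sigma^{(1)}) = 0$, hence $L_p(f/K, \Sigma^{(1)}) \in (\gamma^{\cyc}-1)(\Lambda \otimes \bQ_p)$. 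Since $(\gamma^{\cyc}-1)$ is a height-one prime and the cokernel of ${\rm Col}^{(1)}$ is pseudo-null, localizing at $(\gamma^{\cyc}-1)$ turns ${\rm Col}^{(1)}$ into an isomorphism, so this divisibility lifts to a class $z \in H^1(K_{\overline{\pp}}, \mathscr{F}^-\mathbf{T}) \otimes \bQ_p$ with $(\gamma^{\cyc}-1)z$ equal to the image of ${\rm loc}_{\overline{\pp}}(\mathcal{BF})$; descending to each $n$ yields $z_n \in H_n^-$ with the analogous relation.

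\emph{Lift and uniqueness.} The map $\pi_n$ is surjective after $\otimes \bQ_p$: the obstruction $H^2_{\rm Iw}(L_{n,\overline{\pp}}, \mathscr{F}^+T)$ is Tate-dual to $H^0$ of an unramified character and vanishes generically after $\otimes \bQ_p$. Lift $z_n$ to some $\beta_n \in H_n$; uniqueness modulo $\iota_n(H_n^+) = \ker(\pi_n)$ reduces to the torsion-freeness of $H_n^-$ over $\bZ_p[[\Gamma^{\cyc}]] \otimes \bQ_p$, which follows from the classical structure of local Iwasawa cohomology of an unramified one-dimensional representation along a cyclotomic $\bZ_p$-tower (Kummer theory / Perrin-Riou).

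\emph{Compatibility and identification.} The corestriction-compatibility of $\mathcal{BF}(L_n)$ and the uniqueness modulo $\iota_n(H_n^+)$ imply that the $\beta_n$'s assemble to a limit class whose image $\beta_\infty(\mathds{1}) \in H^1(K_{\overline{\pp}}, \Tc) \otimes \bQ_p / H^1_{\rm Gr}(K_{\overline{\pp}}, \Tc) \otimes \bQ_p$ is well-defined. The identification of this quotient with $\Lambda^\ac \otimes \bQ_p$ is induced from ${\rm Col}^{(1)}$ by modding out $(\gamma^{\cyc}-1)$ on both sides; tracing through, $\beta_\infty(\mathds{1})$ corresponds to the image in $\Lambda^\ac \otimes \bQ_p$ of $L_p(f/K, \Sigma^{(1)}) / (\gamma^{\cyc}-1)$, which by~\eqref{eq:expand} together with $L_{p,0}(f/K) = 0$ equals $L_{p,1}^{\cyc}(f/K)$. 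The main obstacle is the first step: lifting the $(\gamma^{\cyc}-1)$-divisibility of $L_p(f/K, \Sigma^{(1)})$ back across ${\rm Col}^{(1)}$ given only that its cokernel is pseudo-null, and doing so compatibly at each finite level $L_n$.
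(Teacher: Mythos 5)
Your proposal follows the same route as the paper's terse two-sentence proof: reduce the relation to the $\mathscr{F}^-T$-quotient, use the reciprocity law ${\rm Col}^{(1)}({\rm loc}_{\overline{\pp}}(\mathcal{BF}))=L_p(f/K,\Sigma^{(1)})$ together with the vanishing of the constant term $L_{p,0}(f/K)=L^{\ac}_p(f/K,\Sigma^{(1)})$ to obtain $(\gamma^{\cyc}-1)$-divisibility, lift back to $T$, and then trace the identification with $L_{p,1}^{\cyc}(f/K)$ through ${\rm Col}^{(1)}$. Your reading of the defining equation modulo $\iota_n(H_n^+)$ is the intended one: ${\rm loc}_{\overline{\pp}}(\mathcal{BF}(L_n))$ is in general not literally divisible by $\gamma^{\cyc}-1$ in $H^1_{\rm Iw}(L_{n,\overline{\pp}},T)\otimes\bQ_p$ (its image at the bottom of the cyclotomic tower is ${\rm loc}_{\overline{\pp}}(\mathcal{BF}^\ac_n)$, which is nonzero), so the statement has to be read in the $\mathscr{F}^-$-quotient.

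The obstacle you flag at the end is genuine, and the localization argument you sketch does not close it. From ${\rm Col}^{(1)}(y)\in(\gamma^{\cyc}-1)\Lambda\otimes\bQ_p$ together with pseudo-nullity of ${\rm coker}({\rm Col}^{(1)})$ one cannot conclude $y\in(\gamma^{\cyc}-1)H^1(K_{\overline{\pp}},\mathscr{F}^-\mathbf{T})\otimes\bQ_p$: applying the Snake Lemma to the two multiplication-by-$(\gamma^{\cyc}-1)$ short exact sequences shows the obstruction lives exactly in ${\rm coker}({\rm Col}^{(1)})[\gamma^{\cyc}-1]$, which can be nonzero even after $\otimes\bQ_p$. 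The intended argument --- the one that makes the paper's appeal to ``the injectivity of ${\rm Col}^{(1)}$'' precise, and that also underlies the last assertion of Corollary~\ref{cor:str-Gr} --- is that the Perrin-Riou big logarithm underlying ${\rm Col}^{(1)}$ is compatible with specialization along $\Lambda\twoheadrightarrow\Lambda^\ac$, yielding a commuting square with an induced injection $H^1(K_{\overline{\pp}},\mathscr{F}^-\Tc)\otimes\bQ_p\hookrightarrow\Lambda^\ac\otimes\bQ_p$. Chasing ${\rm loc}_{\overline{\pp}}(\mathcal{BF})$ through this square and using $L_{p,0}(f/K)=0$ gives that the image of ${\rm loc}_{\overline{\pp}}(\mathcal{BF}^\ac_n)$ in $H^1(K^\ac_{n,\overline{\pp}},\mathscr{F}^-T)\otimes\bQ_p$ vanishes for every $n$; by the control theorem for local Iwasawa cohomology along $L_n/K_n^\ac$ (exact after $\otimes\bQ_p$), this is exactly the required $(\gamma^{\cyc}-1)$-divisibility in $H_n^-$, with no reference to the pseudo-null cokernel at all. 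Your remaining steps --- surjectivity of $\pi_n$ after $\otimes\bQ_p$, torsion-freeness of $H_n^-$ over $\bZ_p[[\Gamma^\cyc]]\otimes\bQ_p$, and the computation of $\beta_\infty(\mathds{1})$ as the image of $L_p(f/K,\Sigma^{(1)})/(\gamma^\cyc-1)$ --- are sound and match what the paper leaves implicit.
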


\begin{proof}
By the injectivity of ${\rm Col}^{(1)}$,
the first claim follows easily from the explicit reciprocity law of Theorem~\ref{thm:Col} and the vanishing
of $L_{p,0}(f/K)$. The second claim is a direct consequence
of the definitions of $\beta_\infty(\mathds{1})$ and $L^{\cyc}_{p,1}(f/K)$
(and again Theorem~\ref{thm:Col}).
\end{proof}

Recall that by Corollary~\ref{cor:str-Gr}
we have 
the inclusion $\mathcal{BF}_n^\ac\in{\rm Sel}(K_n^\ac,T)$ for every $n\geqslant 0$. 
Let $\cI$ be the augmentation ideal of $\ro[[\Gamma^\cyc]]$,
and set $\mathcal{J}=\cI/\cI^2$.

\begin{thm}\label{thm:rubin-ht}
For every $n\geqslant 0$ there is a 
$p$-adic height pairing
\[
\langle\;,\;\rangle_{K_n^\ac}^{\cyc}:{\rm Sel}_{\Gr}(K_n^\ac,T)\times{\rm Sel}_{\Gr}(K_n^\ac,T)
\longrightarrow p^{-k}\bZ_p\otimes_{\bZ_p}\mathcal{J}
\]
for some $k\in\bZ_{\geqslant 0}$ independent of $n$,
such that for every $b\in{\rm Sel}_{\Gr}(K_n^\ac,T)$, we have
\begin{equation}\label{eq:rubin-ht}
\langle\mathcal{BF}^\ac_{n},b\rangle^{\rm cyc}_{K_n^\ac}=(\beta_n(\mathds{1}),{\rm loc}_{\overline\pp}(b))_n\otimes(\gamma^\cyc-1),
\end{equation}
where $(\;,\;)_n$ is the $\bQ_p$-linear extension of the local Tate pairing
\[
\frac{H^1(K_{n,\overline\pp}^\ac,T)}{H^1_{\rm Gr}(K_{n,\overline\pp}^\ac,T)}
\times H^1_{\rm Gr}(K_{n,\overline\pp}^\ac,T)\longrightarrow\ro.
\]
\end{thm}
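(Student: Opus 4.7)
The plan is to realize $\langle\;,\;\rangle^{\rm cyc}_{K_n^\ac}$ as a Bockstein-type pairing and then evaluate it on $(\mathcal{BF}_n^\ac,b)$ using the canonical Iwasawa lift of $\mathcal{BF}_n^\ac$. Concretely, the short exact sequence of $\ro[[\Gamma^\cyc]]$-modules
\[
0\longrightarrow\mathcal{J}\otimes T\longrightarrow (\ro[[\Gamma^\cyc]]/\cI^2)\otimes T\longrightarrow T\longrightarrow 0
\]
gives a Bockstein map on $H^1$; imposing the Greenberg conditions at primes above $p$ restricts this to a $\delta_n$ on ${\rm Sel}_\Gr(K_n^\ac,T)$, and composing with Poitou--Tate duality against $b\in{\rm Sel}_\Gr(K_n^\ac,T)$ produces the desired pairing valued in $\mathcal{J}$. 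A uniform denominator $p^{-k}$ comes from standard bounds on the torsion in the relevant local and global cohomology groups at finite level, which are $n$-independent in the ordinary setting.

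Next I would exploit that $\mathcal{BF}_n^\ac$ admits a canonical lift to cyclotomic Iwasawa cohomology given by $\mathcal{BF}(L_n)\in H^1_{\rm Iw}(L_n,T)$. Its reduction modulo $\cI^2$ yields a global class $\widetilde{\mathcal{BF}}_n\in H^1(K_n^\ac,T\otimes(\ro[[\Gamma^\cyc]]/\cI^2))$ projecting onto $\mathcal{BF}_n^\ac$. By Corollary~\ref{cor:str-Gr} this lift already satisfies the Greenberg condition at $\pp$ (as $\mathcal{BF}$ lies in the $\mathscr{F}^+$-direction there) and is unramified away from $p$; the sole local failure to be Selmer occurs at $\overline{\pp}$, where Lemma~\ref{lem:3.1.1} gives
\[
{\rm loc}_{\overline{\pp}}(\widetilde{\mathcal{BF}}_n)\equiv(\gamma^\cyc-1)\cdot\beta_n\pmod{\cI^2}.
\]

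Finally I would invoke the standard Rubin--Nekov\'a\v{r} principle: for a Selmer class admitting such a global Iwasawa lift, the Bockstein pairing collapses to a sum of local Tate pairings at the primes where the lift fails to satisfy the imposed local condition. In our case only $\overline{\pp}$ contributes, and unwinding the Bockstein formula (using that $b$ is Greenberg at $\overline{\pp}$ and that $H^1_{\rm Gr}$ is its own annihilator under local Tate duality) produces
\[
\langle\mathcal{BF}_n^\ac,b\rangle^{\rm cyc}_{K_n^\ac}=(\beta_n(\mathds{1}),{\rm loc}_{\overline{\pp}}(b))_n\otimes(\gamma^\cyc-1),
\]
with the factor $(\gamma^\cyc-1)\in\mathcal{J}$ recording the augmentation-ideal shift encoded in the $\beta_n$-equation. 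The main technical obstacle is the bookkeeping: matching the Bockstein convention with the correct normalization of the local Tate pairing, and establishing the $n$-independent integrality. Both are handled in close analogy with \cite[\S{2.4}]{AHsplit}, whose CM argument transfers essentially verbatim to our non-CM setting once the Iwasawa lift $\mathcal{BF}(L_n)$ is in hand.
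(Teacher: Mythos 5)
Your proposal is correct and matches the paper's intended argument: the paper's own proof consists of citations to Perrin-Riou for the Bockstein-type construction of the cyclotomic height pairing and to Rubin's height formula (via \cite[\S 3.2]{AHsplit} and Nekov\'a\v{r}) for the identity \eqref{eq:rubin-ht}, and your sketch reconstructs precisely that chain of reasoning, with the Iwasawa lift $\mathcal{BF}(L_n)$, the sole local failure at $\overline{\pp}$, and Lemma~\ref{lem:3.1.1} playing the roles the paper assigns them. The only minor slip is the reference to \cite[\S 2.4]{AHsplit} where the paper points to \cite[\S 3.2]{AHsplit}, and the Greenberg condition at $\pp$ is already built into $\mathcal{BF}\in{\rm Sel}_{{\rm Gr},\emptyset}(K,\mathbf{T})$ from Theorem~\ref{thm:Col} rather than being a consequence of Corollary~\ref{cor:str-Gr}; neither affects the substance.
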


\begin{proof}
The construction of the $p$-adic height pairing follows work of Perrin-Riou~\cite{PR-109},
and the proof of the height formula $(\ref{eq:rubin-ht})$ follows from a straightforward
extension of a well-known result due to Rubin \cite{rubin-ht}
(cf. \cite[\S{3.2}]{AHsplit} and \cite[(13.3.14)]{nekovar310}).
\end{proof}

\subsection{Main results}

Define the $\Lambda^\ac$-adic height pairing
\begin{equation}\label{eq:lambda-ht}
\langle\;,\;\rangle_{K^\ac_\infty}^{\cyc}:{\rm Sel}_{\Gr}(K,\Tc)
\otimes_{\Lambda^\ac}{\rm Sel}_{\Gr}(K,\Tc)^\iota
\longrightarrow\bQ_p\otimes_{\bZ_p}\Lambda^\ac\otimes_{\bZ_p}\mathcal{J}
\end{equation}
by the formula
\[
\langle a_\infty,b_\infty\rangle^{\rm cyc}_{K_\infty^\ac}=
\varprojlim_n\sum_{\sigma\in{\rm Gal}(K_n^\ac/K)}\langle a_n,b_n^\sigma\rangle^{\rm cyc}_{K_n^\ac}\cdot\sigma,
\]
and define the cyclotomic regulator $\mathcal{R}_{\rm cyc}\subseteq\Lambda^\ac$
to be the characteristic ideal of the cokernel of $(\ref{eq:lambda-ht})$.
The following $\Lambda^\ac$-adic Birch and Swinnerton-Dyer formula
corresponds to Theorem~B in the Introduction.

\begin{thm}\label{thm:3.1.5}
Let the hypotheses be as in Theorem~\ref{thm:HP-MC}, and denote by
$\mathcal{X}^{}_{\rm tors}$ the characteristic ideal of 
$X_{\Gr}(K,\Ac)_{\rm tors}$. Then
\[
\mathcal{R}_{\rm cyc}^{}\cdot\mathcal{X}_{\rm tors}
=(L_{p,1}^{\cyc}(f/K))
\]
as ideals in $\Lambda^\ac\otimes_{\bZ_p}\bQ_p$.
\end{thm}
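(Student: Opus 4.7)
The plan is to combine the Heegner point main conjecture (Theorem~\ref{thm:HP-MC}) with the Rubin-style height formula of Theorem~\ref{thm:rubin-ht}, using the Beilinson--Flach class $\mathcal{BF}^\ac$ as an auxiliary element whose height can be evaluated via the explicit reciprocity laws of Theorem~\ref{thm:Col}. First I would set up the algebraic reduction. Put $S:={\rm Sel}_{\Gr}(K,\Tc)$; by Theorem~\ref{thm:HP-MC}(1) this has $\Lambda^\ac$-rank one, and by Lemma~\ref{lem:no-tors} it is torsion-free. A standard computation for bilinear forms on rank-one torsion-free modules gives, for any non-torsion $\xi\in S$, the identity
\[
(\langle\xi,\xi\rangle_{K_\infty^\ac}^{\cyc})=\mathcal{R}_{\cyc}\cdot Ch_{\Lambda^\ac}(S/\Lambda^\ac\xi)\cdot Ch_{\Lambda^\ac}(S/\Lambda^\ac\xi)^\iota
\]
as ideals in $\Lambda^\ac\otimes\bQ_p$. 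Specializing to $\xi=\mathbf{z}_f$ and invoking the equality $Ch(S/\Lambda^\ac\mathbf{z}_f)^2=\mathcal{X}_{\rm tors}$ from Theorem~\ref{thm:HP-MC}(1) reduces the theorem to the identity $(\langle\mathbf{z}_f,\mathbf{z}_f\rangle_{K_\infty^\ac}^{\cyc})=(L_{p,1}^{\cyc}(f/K))$.

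Next I would pivot from $\mathbf{z}_f$ to $\mathcal{BF}^\ac$, which also lies in $S$ by Corollary~\ref{cor:str-Gr}. Since both classes are non-torsion in the rank-one module $S$, bilinearity gives $\langle\mathcal{BF}^\ac,\mathcal{BF}^\ac\rangle^{\cyc}=c\cdot c^\iota\cdot\langle\mathbf{z}_f,\mathbf{z}_f\rangle^{\cyc}$ for some $c\in{\rm Frac}(\Lambda^\ac)^\times$ satisfying $(c)=Ch(S/\Lambda^\ac\mathcal{BF}^\ac)\cdot Ch(S/\Lambda^\ac\mathbf{z}_f)^{-1}$. Proposition~\ref{thm:str} combined with Corollary~\ref{cor:two-varIMC}(1), Lemma~\ref{lem:str-rel}(3), and Theorem~\ref{thm:HP-MC}(1) makes $(c\cdot c^\iota)$ explicit in terms of $\mathcal{X}_{\rm tors}$ and the characteristic ideals of the auxiliary modules $X_{\mathscr{L}}(K,\Ac)$.

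In parallel I would compute $\langle\mathcal{BF}^\ac,\mathcal{BF}^\ac\rangle^{\cyc}$ via Theorem~\ref{thm:rubin-ht}; passing to the anticyclotomic limit,
\[
\langle\mathcal{BF}^\ac,\mathcal{BF}^\ac\rangle_{K_\infty^\ac}^{\cyc}=\bigl(\beta_\infty(\mathds{1}),{\rm loc}_{\overline\pp}(\mathcal{BF}^\ac)\bigr)_\infty\cdot(\gamma^{\cyc}-1),
\]
where $(\cdot,\cdot)_\infty$ is the $\Lambda^\ac$-valued local Tate pairing at $\overline\pp$. By Lemma~\ref{lem:3.1.1}, $\beta_\infty(\mathds{1})$ corresponds via ${\rm Col}^{(1)}$ to $L_{p,1}^{\cyc}(f/K)$. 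Local Tate duality at $\overline\pp$, compatible with the pair of Coleman maps on $\mathscr{F}^-\mathbf{T}$ and $\mathscr{F}^+\mathbf{T}$ (the latter obtained by symmetry from Theorem~\ref{thm:Col} upon swapping $\pp$ and $\overline\pp$), transports the Tate pairing to multiplication in $\Lambda^\ac\otimes\bQ_p$, with the $\mathscr{F}^+$ factor controlled by Corollary~\ref{cor:two-varIMC} and by the identification $\mathscr{L}_\pp^\ac(f/K)=\mathscr{L}_\pp^{\tt BDP}(f/K)^2$ of Corollary~\ref{cor:wan-bdp}.

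Combining the two computations expresses $\langle\mathcal{BF}^\ac,\mathcal{BF}^\ac\rangle^{\cyc}$ as $L_{p,1}^{\cyc}(f/K)$ times precisely the characteristic ideals making up $(c\cdot c^\iota)\cdot\mathcal{X}_{\rm tors}$, from which the desired identity for $\langle\mathbf{z}_f,\mathbf{z}_f\rangle^{\cyc}$ follows after dividing by $c\cdot c^\iota$. The main obstacle will be the bookkeeping in the middle step: matching the local factor coming from the Tate-pairing computation with the ratio of characteristic ideals arising from the gap between $S={\rm Sel}_\Gr$, where the height lives, and ${\rm Sel}_{{\rm Gr},\emptyset}$, where Proposition~\ref{thm:str} (and hence the Beilinson--Flach Iwasawa main conjecture) naturally applies. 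This cancellation---the non-CM analog of the calculation in \cite[\S{3}]{AHsplit}---is what ensures no residual Euler factor survives in the final identity.
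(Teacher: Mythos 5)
Your proposal identifies the right set of ingredients, and the opening algebraic reduction is sound: the identity $(\langle\xi,\xi\rangle_{K_\infty^\ac}^{\cyc})=\mathcal{R}_{\cyc}\cdot Ch_{\Lambda^\ac}(S/\Lambda^\ac\xi)\cdot Ch_{\Lambda^\ac}(S/\Lambda^\ac\xi)^\iota$ for a nontorsion $\xi$ in a torsion-free rank-one $\Lambda^\ac$-module $S$ is correct, and combining it with Theorem~\ref{thm:HP-MC}(1) does reduce Theorem~\ref{thm:3.1.5} to $(\langle\mathbf{z}_f,\mathbf{z}_f\rangle_{K_\infty^\ac}^{\cyc})=(L_{p,1}^{\cyc}(f/K))$. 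But this is a \emph{backwards} reduction: the paper proves Theorem~\ref{thm:3.1.5} directly, without invoking the Heegner-point main conjecture, and then derives the Gross--Zagier identity $(\langle\mathbf{z}_f,\mathbf{z}_f\rangle)=(L_{p,1}^{\cyc})$ from it as a corollary. Your route requires establishing that identity independently, and the place where your argument then breaks down is the evaluation of $\langle\mathcal{BF}^\ac,\mathcal{BF}^\ac\rangle_{K_\infty^\ac}^{\cyc}$.

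Concretely, Theorem~\ref{thm:rubin-ht} gives $\langle\mathcal{BF}^\ac_n,b\rangle=(\beta_n(\mathds{1}),{\rm loc}_{\overline\pp}(b))_n\otimes(\gamma^{\cyc}-1)$, and to specialize to $b=\mathcal{BF}^\ac_n$ you need to evaluate the Tate pairing of $\beta_\infty(\mathds{1})$ (an element of $H^1(K_{\overline\pp},\Tc)/H^1_{\rm Gr}$) against ${\rm loc}_{\overline\pp}(\mathcal{BF}^\ac)\in H^1_{\rm Gr}(K_{\overline\pp},\Tc)\simeq H^1(K_{\overline\pp},\mathscr{F}^+\Tc)$. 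For the first factor you have Lemma~\ref{lem:3.1.1} and ${\rm Col}^{(1)}$. For the second, you invoke a ``Coleman map on $\mathscr{F}^+\mathbf{T}$ at $\overline\pp$, obtained by symmetry from Theorem~\ref{thm:Col} upon swapping $\pp$ and $\overline\pp$.'' No such map exists in the paper's toolkit, and it is not obtainable by symmetry: the maps of Theorem~\ref{thm:Col} are ${\rm Col}^{(1)}$ on $H^1(K_{\overline\pp},\mathscr{F}^-\mathbf{T})$ and ${\rm Col}^{(2)}$ on $H^1(K_\pp,\mathscr{F}^+\mathbf{T})$, built from $\eta_{\mathbf{f}}\otimes\omega_{\mathbf{g}}$ and $\eta_{\mathbf{g}}\otimes\omega_{\mathbf{f}}$ respectively, and the entire construction is asymmetric in $\pp$ versus $\overline\pp$ because the CM Hida family $\mathbf{g}$ of $(\ref{eq:CM})$ is tied to $\pp$. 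There is no explicit reciprocity law computing ${\rm loc}_{\overline\pp}(\mathcal{BF}^\ac)$ inside $H^1(K_{\overline\pp},\mathscr{F}^+\Tc)$, and Corollary~\ref{cor:two-varIMC} is a statement about characteristic ideals, not a device for evaluating a specific local cohomology class. Furthermore, even if that could be made to work, you would also need to relate $Ch_{\Lambda^\ac}(S/\Lambda^\ac\mathcal{BF}^\ac)\cdot Ch_{\Lambda^\ac}(S/\Lambda^\ac\mathbf{z}_f)^{-1}$ to a purely local ideal for the cancellation you describe; via the paper's (\ref{eq:str-rel}) this ratio works out to $\mathcal{X}_{\rm tors}^{1/2}\cdot\eta^\iota$, so the bookkeeping you hope to close does not in fact close into a purely local factor.

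The paper's actual argument sidesteps the self-pairing entirely: it pairs $\mathcal{BF}^\ac$ against \emph{all} of $S={\rm Sel}_{\Gr}(K,\Tc)$ using Theorem~\ref{thm:rubin-ht}, so that only the ${\rm Col}^{(1)}$-value of $\beta_\infty(\mathds{1})$ (given by Lemma~\ref{lem:3.1.1}) is needed, while ${\rm loc}_{\overline\pp}(b)$ sweeps out the finite-index submodule ${\rm loc}_{\overline\pp}(S)\subseteq H^1_{\rm Gr}(K_{\overline\pp},\Tc)$ of characteristic ideal $\eta$. This yields directly the ideal-level identity $\mathcal{R}_{\cyc}\cdot Ch_{\Lambda^\ac}(S/\Lambda^\ac\cdot\mathcal{BF}^\ac)=(L_{p,1}^{\cyc}(f/K))\cdot\eta^\iota$, which is then combined with Proposition~\ref{thm:str}, Corollary~\ref{cor:two-varIMC}, and the Poitou--Tate sequence $(\ref{eq:PT2})$ (using Lemma~\ref{lem:str-rel} to identify the auxiliary Selmer characteristic ideals) to produce $(\ref{eq:str-rel-2})$ and, after dividing by the nonzero $\eta^\iota$, the desired formula. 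No second explicit reciprocity law at $\overline\pp$ is needed, and no reference to $\mathbf{z}_f$ is made at all in the proof of Theorem~\ref{thm:3.1.5}.
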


\begin{proof}
The height formula of Theorem~\ref{thm:rubin-ht} and Lemma~\ref{lem:3.1.1} immediately
yield the equality
\begin{equation}\label{eq:cor-ht}
\mathcal{R}_{\rm cyc}\cdot
Ch_{\Lambda^\ac}\biggl(\frac{{\rm Sel}_{\Gr}(K,\Tc)}{\Lambda^\ac\cdot\mathcal{BF}^\ac}\biggr)
=(L_{p,1}^{\cyc}(f/K))\cdot\eta^\iota,
\end{equation}
where $\eta\subseteq\Lambda^\ac$ is the characteristic ideal of
$H^1_{\rm Gr}(K_{\overline\pp},\Tc)/{\rm loc}_{\overline\pp}({\rm Sel}_{\Gr}(K,\Tc))$. 
By Theorem~\ref{thm:HP-MC} we have
${\rm rank}_{\Lambda^\ac}({\rm Sel}_{\Gr}(K,\Tc))=1$, and so ${\rm Sel}_{0,{\rm Gr}}(L,\Tc)=\{0\}$
by Lemma~\ref{lem:cas}, from where the nonvanishing of $\eta$ follows
by the exactness of (\ref{eq:tauto-es}) in the proof of that lemma.

Now, from global duality we have the exact sequence
\begin{equation}\label{eq:PT2}
0\longrightarrow\frac{H^1_{\rm Gr}(K_{\overline\pp},\Tc)}
{{\rm loc}_{\overline\pp}({\rm Sel}_{\Gr}(K,\Tc))}
\longrightarrow X_{{\rm Gr},\emptyset}(K,\Ac)
\longrightarrow X_{\Gr}(K,\Ac)\longrightarrow 0.
\end{equation}
Since ${\rm Sel}_{\Gr}(K,\Tc)$ has $\Lambda^\ac$-rank $1$, by Lemma~\ref{thm:ES}
we know that $X_{{\rm Gr},0}(K,\Ac)$ is $\Lambda^\ac$-torsion
and that
\begin{equation}\label{eq:str-to-sel}
{\rm Sel}(K,\Tc)={\rm Sel}_{{\rm Gr},\emptyset}(K,\Tc).
\end{equation}
The discussion above also shows that the second and third terms in $(\ref{eq:PT2})$
both have $\Lambda^\ac$-rank $1$. Taking $\Lambda^\ac$-torsion in $(\ref{eq:PT2})$
and applying Lemma~\ref{lem:str-rel}, we thus obtain the equality
\begin{equation}\label{eq:str-rel}
Ch_{\Lambda^\ac}(X_{{\rm Gr},0}(K,\Ac))=\mathcal{X}_{\rm tors}\cdot\eta^\iota
\end{equation}
as ideals in $\Lambda^\ac\otimes_{\bZ_p}\bQ_p$.
By Proposition~\ref{thm:str} (whose conclusion can be invoked thanks to Corollary~\ref{cor:two-varIMC}),
equality $(\ref{eq:str-rel})$ amounts to 
\begin{equation}\label{eq:str-rel-2}
Ch_{\Lambda^\ac}\biggl(\frac{{\rm Sel}_{\Gr}(K,\Tc)}{\Lambda^\ac\cdot\mathcal{BF}^\ac}\biggr)
=\mathcal{X}_{\rm tors}\cdot\eta^\iota,
\end{equation}
using equality $(\ref{eq:str-to-sel})$ in the left-hand side.
Substituting $(\ref{eq:str-rel-2})$ into $(\ref{eq:cor-ht})$,
the result follows from the nonvanishing of $\eta$.
\end{proof}

The $\Lambda^\ac$-adic Gross--Zagier formula of Theorem~A in the Introduction is
now a direct consequence of Theorem~\ref{thm:3.1.5} and
the Heegner point main conjecture of Theorem~\ref{thm:HP-MC}.

\begin{thm}
Under the hypotheses of Theorem~\ref{thm:HP-MC} we have the equality
\[
(L_{p,1}^{\cyc}(f/K))=(\langle\mathbf{z}_f,\mathbf{z}_f\rangle_{K_\infty^\ac}^{\cyc})
\]
as ideals of $\Lambda^\ac\otimes_{\bZ_p}\bQ_p$.
\end{thm}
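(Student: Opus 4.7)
\medskip

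The plan is to deduce this equality by combining Theorem~\ref{thm:3.1.5} with the Heegner point main conjecture (Theorem~\ref{thm:HP-MC}(1)), via a purely algebraic identity relating the cokernel of the $\Lambda^{\rm ac}$-adic height pairing to the self-height $\langle\mathbf{z}_f,\mathbf{z}_f\rangle$ and the defect of $\mathbf{z}_f$ from generating ${\rm Sel}(K,\Tc)$.

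More precisely, by Theorem~\ref{thm:3.1.5} we have
$\mathcal{R}_{\rm cyc}\cdot\mathcal{X}_{\rm tors}=(L_{p,1}^{\rm cyc}(f/K))$
in $\Lambda^{\rm ac}\otimes_{\bZ_p}\bQ_p$, and by the Heegner point main conjecture (Theorem~\ref{thm:HP-MC}(1)) we have
$\mathcal{X}_{\rm tors}=Ch_{\Lambda^{\rm ac}}({\rm Sel}(K,\Tc)/\Lambda^{\rm ac}\cdot\mathbf{z}_f)^2$.
Moreover, the identification ${\rm Sel}(K,\Tc)={\rm Sel}_{\rm Gr}(K,\Tc)$ (see (\ref{eq:str-to-sel}) in the proof of Theorem~\ref{thm:3.1.5}) ensures that $\mathbf{z}_f$ lies in the domain of the height pairing (\ref{eq:lambda-ht}), and together with HP-MC it implies that ${\rm Sel}_{\rm Gr}(K,\Tc)$ has $\Lambda^{\rm ac}$-rank one with $\mathbf{z}_f$ generating it modulo $\Lambda^{\rm ac}$-torsion. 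Thus it suffices to establish the algebraic identity
\[
\mathcal{R}_{\rm cyc}\cdot Ch_{\Lambda^{\rm ac}}\biggl(\frac{{\rm Sel}_{\rm Gr}(K,\Tc)}{\Lambda^{\rm ac}\cdot\mathbf{z}_f}\biggr)^2=(\langle\mathbf{z}_f,\mathbf{z}_f\rangle_{K_\infty^{\rm ac}}^{\rm cyc})
\]
as ideals in $\Lambda^{\rm ac}\otimes_{\bZ_p}\bQ_p$.

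To prove this identity, I would set $M={\rm Sel}_{\rm Gr}(K,\Tc)$ and note that the target $\Lambda^{\rm ac}\otimes_{\bZ_p}\cJ$ of the height pairing is a free $\Lambda^{\rm ac}$-module of rank one (since $\cJ=\cI/\cI^2$ is free of rank one over $\bZ_p$, generated by $\gamma^{\rm cyc}-1$). In particular, the target is torsion-free, so the pairing $\langle\,,\,\rangle_{K_\infty^{\rm ac}}^{\rm cyc}$ vanishes on $M_{\rm tors}\otimes M^\iota$ and factors through a pairing $\bar{M}\otimes\bar{M}^\iota\to\Lambda^{\rm ac}\otimes\cJ$, where $\bar M:=M/M_{\rm tors}$ is torsion-free of rank one. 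After inverting a suitable element (or working up to pseudo-isomorphism and tensoring with $\bQ_p$), $\bar{M}$ admits a generator $s$, and we may write $\mathbf{z}_f\equiv\alpha\cdot s$ in $\bar M$ with $(\alpha)=Ch_{\Lambda^{\rm ac}}(\bar M/\Lambda^{\rm ac}\bar{\mathbf{z}}_f)$; bilinearity yields
$\langle\mathbf{z}_f,\mathbf{z}_f\rangle=\alpha\cdot\alpha^\iota\cdot\langle s,s\rangle$,
while $Ch_{\Lambda^{\rm ac}}(M/\Lambda^{\rm ac}\mathbf{z}_f)=(\alpha)\cdot Ch_{\Lambda^{\rm ac}}(M_{\rm tors})$ and, identifying the target with $\Lambda^{\rm ac}$ via $(\gamma^{\rm cyc}-1)$, the cokernel of the pairing has characteristic ideal $(\langle s,s\rangle)\cdot Ch_{\Lambda^{\rm ac}}(M_{\rm tors})\cdot Ch_{\Lambda^{\rm ac}}(M_{\rm tors})^\iota$. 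The displayed identity then follows after taking into account that $Ch_{\Lambda^{\rm ac}}(N)=Ch_{\Lambda^{\rm ac}}(N^\iota)$ for any torsion module $N$ up to the $\iota$-involution acting trivially on characteristic ideals (which is harmless modulo units).

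The main obstacle in this strategy is essentially bookkeeping rather than conceptual: one must carefully track the contributions of $M_{\rm tors}$ and the pseudo-null ambiguities that appear when passing between $M$ and its reflexive hull, and verify that these contributions match on both sides of the claimed identity up to the $\iota$-involution (which preserves characteristic ideals). Once this algebraic lemma is in hand, the Gross--Zagier formula of Theorem~A follows at once by substitution: plugging HP-MC into Theorem~\ref{thm:3.1.5} produces $(L_{p,1}^{\rm cyc}(f/K))=\mathcal{R}_{\rm cyc}\cdot Ch_{\Lambda^{\rm ac}}({\rm Sel}(K,\Tc)/\Lambda^{\rm ac}\mathbf{z}_f)^2$, which by the algebraic identity equals $(\langle\mathbf{z}_f,\mathbf{z}_f\rangle_{K_\infty^{\rm ac}}^{\rm cyc})$.
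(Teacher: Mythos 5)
Your proposal takes essentially the same route as the paper: combine Theorem~\ref{thm:3.1.5} with the Heegner point main conjecture (Theorem~\ref{thm:HP-MC}) and conclude via the algebraic identity $\mathcal{R}_{\rm cyc}\cdot Ch_{\Lambda^\ac}({\rm Sel}/\Lambda^\ac\mathbf{z}_f)^2=(\langle\mathbf{z}_f,\mathbf{z}_f\rangle)$, which the paper treats as immediate and you spell out. Two small remarks on the spelled-out algebra. First, $M={\rm Sel}_{\Gr}(K,\Tc)$ is in fact torsion-free (Lemma~\ref{lem:no-tors}), so the $M_{\rm tors}$-bookkeeping you worry about is vacuous here; there is nothing to track. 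Second, and this is a genuine (though easily repaired) gap: your final step rests on $(\alpha)=(\alpha^\iota)$, i.e.\ on the $\iota$-invariance of $Ch_{\Lambda^\ac}({\rm Sel}/\Lambda^\ac\mathbf{z}_f)$, and you justify it by asserting ``$Ch_{\Lambda^{\rm ac}}(N)=Ch_{\Lambda^{\rm ac}}(N^\iota)$ for any torsion module $N$,'' which is false in general (take $N=\Lambda^\ac/(\gamma^\ac-2)$). What is true, and what you should invoke instead, is that the action of complex conjugation gives a $\Lambda^\ac$-semilinear isomorphism $X(K,\Ac)\simeq X(K,\Ac)^\iota$, so $Ch_{\Lambda^\ac}(X(K,\Ac)_{\rm tors})$ is $\iota$-invariant; since $\Lambda^\ac$ is a UFD and Theorem~\ref{thm:HP-MC} identifies this with $Ch_{\Lambda^\ac}({\rm Sel}/\Lambda^\ac\mathbf{z}_f)^2$, the latter characteristic ideal is itself $\iota$-invariant, which is exactly what your argument needs.
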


\begin{proof}
By Theorem~\ref{thm:HP-MC} we have the equality
\begin{equation}\label{eq:HPMC}
Ch_{\Lambda^\ac}(X_{\Gr}(K,\Ac)_{\tors})
=Ch_{\Lambda^\ac}\biggl(\frac{{\rm Sel}_{\Gr}(K,\Tc)}{\Lambda^\ac\cdot\mathbf{z}_{f}}\biggr)^2\nonumber
\end{equation}
up to powers of $p\Lambda^\ac$. Combined with Theorem~\ref{thm:3.1.5}, we conclude that
\begin{align*}
(L_{p,1}^{\cyc}(f/K))
&=\mathcal{R}_{\cyc}
\cdot Ch_{\Lambda^\ac}\biggl(\frac{{\rm Sel}_{\Gr}(K,\Tc)}{\Lambda^\ac\cdot\mathbf{z}_{f}}\biggr)^2
=(\langle\mathbf{z}_{f},\mathbf{z}_{f}\rangle_{K_\infty^\ac}^{\cyc})
\end{align*}
as ideals in $\Lambda^\ac\otimes_{\bZ_p}\bQ_p$, as was to be shown.
\end{proof}

\begin{appendix}

\section{Proofs of Theorem~\ref{thm:HP-MC} and Corollary~\ref{cor:two-varIMC}}

The purpose of this Appendix is to develop some of the details that go into the proofs
of Theorem~\ref{thm:HP-MC} and Corollary~\ref{cor:two-varIMC}.
As already mentioned, the first result follows easily
from the combination of three ingredients: Howard's divisibility in the 
Heegner point main conjecture, Wan's converse divisibility 
in the main conjecture for $\mathscr{L}_\pp(f/K)$,
and an explicit reciprocity law for Heegner points building a bridge between the two. 
The arguments that follow closely parallel those in \cite[\S{4.3}]{cas-wan-SS},
and the interested reader may also wish to consult \cite{wan}.
\sk

Throughout, we let $E/\bQ$ be an elliptic curve of conductor $N$, $f\in S_2(\Gamma_0(N))$
the normalized newform associated with $E$, $p\geqslant 5$ a prime of good reduction for $E$, and
$K/\bQ$ an imaginary quadratic field of discriminant prime to $Np$ satisfying
hypothesis (Heeg) in the Introduction.
We begin by recalling the third of the aforementioned results.

\begin{thm}\label{thm:cas-hsieh}
Assume in addition that $p=\pp\overline\pp$ splits in $K$ and that $f$ is $p$-ordinary.
Then there exists an injective $\Lambda^\ac$-linear map
\[
\mathcal{L}_{+}:H^1(K_\pp,\mathscr{F}^+\Tc)_{\unr}\longrightarrow\Lambda_{\unr}
\]
with finite cokernel such that
\[
\mathcal{L}_{+}({\rm res}_\pp(\mathbf{z}_f))=-\mathscr{L}_\pp^{\tt BDP}(f/K)\cdot\sigma_{-1,\pp},
\]
where $\sigma_{-1,\pp}\in\Gamma^\ac$ has order two.
\end{thm}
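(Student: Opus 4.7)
The plan is to construct $\mathcal{L}_{+}$ via a Perrin-Riou / Coleman map, and then to match it with $\mathscr{L}_\pp^{\tt BDP}(f/K)$ by comparing both sides at a Zariski-dense set of specializations, reducing to the weight-two Bertolini--Darmon--Prasanna formula expressing the $p$-adic logarithm of Heegner points in terms of central $L$-values. Because $p=\pp\overline{\pp}$ splits and $\mathscr{F}^{+}T$ is one-dimensional crystalline of Hodge--Tate weight $1$ (with unramified quotient), the representation $\mathscr{F}^{+}\Tc$ is, up to an unramified twist by $\alpha$, the twist $\mathbf{Z}_p(1)\hat\otimes\Lambda^\ac$, which puts us in the most classical setting of Perrin-Riou's local theory.

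\textbf{Step 1: construction of $\mathcal{L}_{+}$.} First I would apply Perrin-Riou's big logarithm to the cyclotomic tower of $K_\pp=\mathbf{Q}_p$ for the representation $\mathscr{F}^{+}T$, obtaining an injection with bounded cokernel
\[
\mathcal{L}^{\rm cyc}_{+}:H^1_{\rm Iw}(\mathbf{Q}_p^\cyc,\mathscr{F}^{+}T)_{\unr}\longrightarrow\Lambda_{\rm cyc,\unr}.
\]
Next, using that the completion of $K_\infty$ at $\pp$ realizes the full $\bZ_p^2$-extension of $\mathbf{Q}_p$ (cyclotomic times unramified) and decomposing $\Gamma_K$ accordingly, I would extend $\mathcal{L}^{\rm cyc}_{+}$ by base change along the unramified direction to a two-variable Coleman map on $H^1(K_\pp,\mathscr{F}^{+}\mathbf{T})_{\unr}$, then push forward to the anticyclotomic quotient. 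The result is $\mathcal{L}_{+}$; its injectivity and finite cokernel are inherited from the corresponding statements for $\mathcal{L}^{\rm cyc}_{+}$, via a descent argument in the spirit of Proposition~\ref{thm:str}.

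\textbf{Step 2: interpolation and matching.} For every anticyclotomic Hecke character $\hat\psi$ of infinity type $(-\ell,\ell)$ with $\ell\geqslant 1$ (an unramified, and hence Zariski-dense, family in $\Gamma^\ac$), I would evaluate both sides of the desired identity at $\hat\psi$. The left-hand side, by the interpolation property of the Perrin-Riou logarithm, equals (up to explicit Euler-like factors matching those in Theorem~\ref{thm:bdp}) the value of the Bloch--Kato dual-exponential/logarithm of the specialization $\psi(\mathbf{z}_f)\in H^1(K_\pp,\mathscr{F}^{+}T(\psi))$, paired with the differential $\omega_f$. The right-hand side, by construction of $\mathscr{L}_\pp^{\tt BDP}(f/K)$, is the interpolated square root $L(f/K,\psi,1)^{1/2}$ (times the same Euler factors and the period ratio $\Omega_p^{2\ell}/\Omega_K^{2\ell}$). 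The identification of these two quantities character by character \emph{is} the BDP formula in weight two, applied to the regularized Heegner class $\mathbf{z}_{f,p^n,\alpha}$ twisted by $\psi$; the sign $\sigma_{-1,\pp}$ comes from tracking the CM-type used to define $\Omega_p$ against the choice of $\pp$ over $\overline{\pp}$ in the logarithm pairing.

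\textbf{Main obstacle and conclusion.} The hard step is step~2: verifying the weight-two BDP formula at all characters of infinity type $(-\ell,\ell)$, \emph{uniformly} enough to promote a character-by-character identity to an equality in $\Lambda_{\unr}$. Concretely this requires checking that the specialization of the Heegner Euler system class $\mathbf{z}_f$ at $\hat\psi$ coincides with the generalized Heegner class whose logarithm appears in \cite{cas-hsieh1}, including the correct normalizations of periods and Euler factors at $\pp$. Once this identity is established on a dense set, the desired equality of the two Iwasawa elements follows from the injectivity of the map in Step~1 together with the nontriviality of $\mathscr{L}_\pp^{\tt BDP}(f/K)$ from Theorem~\ref{thm:bdp}.
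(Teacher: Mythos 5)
Your proposal correctly reconstructs the content of the result that the paper simply invokes: the published proof of Theorem~\ref{thm:cas-hsieh} is a direct citation of the weight-two case of the explicit reciprocity law of Castella--Hsieh \cite[Thm.~5.7]{cas-hsieh1} (with the injectivity of $\mathcal{L}_+$ drawn from \cite[Prop.~4.11]{LZ2}), and your two steps --- a Loeffler--Zerbes-style two-variable Coleman map built from the cyclotomic Perrin-Riou logarithm for $\mathscr{F}^+T$ extended along the unramified direction and then projected to $\Lambda^{\ac}$, followed by interpolation against the weight-two Bertolini--Darmon--Prasanna formula at anticyclotomic characters of infinity type $(-\ell,\ell)$ --- are precisely the strategy of those cited works. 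Since the paper offers no argument beyond the citation (plus the observation that $\mathrm{res}_\pp(\mathbf{z}_f)$ lands in $H^1(K_\pp,\mathscr{F}^+\Tc)$ by the vanishing of $H^0(K_\pp,\mathscr{F}^-\Tc)$), your outline is essentially the same approach, just unpacked.
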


\begin{proof}
This follows from the weight $2$ case of \cite[Thm.~5.7]{cas-hsieh1}. (We note that
the injectivity of the map $\mathcal{L}_+$ is not explicitly stated in \emph{loc.cit.},
but it follows from \cite[Prop.~4.11]{LZ2} and the construction in \cite[Thm.~5.1]{cas-hsieh1}.
Also, since $\mathbf{z}_f\in{\rm Sel}(K,\Tc)$, the restriction ${\rm res}_\pp(\mathbf{z}_f)$
lands in $H^1_{\rm Gr}(K_\pp,\Tc)$, which is naturally identified with $H^1(K_\pp,\mathscr{F}^+\Tc)$
by the vanishing of $H^0(K_\pp,\mathscr{F}^-\Tc)$.)
\end{proof}

The next three lemmas are applications of Poitou--Tate duality, combined with
the explicit reciprocity law of Theorem~\ref{thm:cas-hsieh}, and in all of them
the assumption that $p=\pp\overline{\pp}$ splits in $K$ is in order.

\begin{lem}\label{thm:ES}
Assume that ${\rm Sel}(K,\Tc)$ has $\Lambda^\ac$-rank $1$. Then
\begin{equation}\label{eq:pm-rel}
{\rm Sel}(K,\Tc)={\rm Sel}_{{\rm Gr},\emptyset}(K,\Tc)
\end{equation}
and $X_{{\rm Gr},0}(K,\Ac)$ is $\Lambda^\ac$-torsion.
\end{lem}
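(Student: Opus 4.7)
The strategy is to derive both claims from Poitou--Tate global duality, taking as key input the explicit reciprocity law of Theorem~\ref{thm:cas-hsieh}. I would consider the pair of Poitou--Tate exact sequences coming from the inclusions ${\rm Sel}_{{\rm Gr},0}(K, \Tc) \subseteq {\rm Sel}(K, \Tc) \subseteq {\rm Sel}_{{\rm Gr},\emptyset}(K, \Tc)$, together with the duality $({\rm Gr}, \emptyset)^\perp = ({\rm Gr}, 0)$ under local Tate pairing (Greenberg being self-dual at the ordinary prime):
\begin{align*}
0 \to {\rm Sel}(K, \Tc) \to {\rm Sel}_{{\rm Gr},\emptyset}(K, \Tc) \to \frac{H^1(K_{\overline\pp}, \Tc)}{H^1_{\rm Gr}(K_{\overline\pp}, \Tc)} \to X(K, \Ac) \to X_{{\rm Gr},0}(K, \Ac) \to 0,\\
0 \to {\rm Sel}_{{\rm Gr},0}(K, \Tc) \to {\rm Sel}(K, \Tc) \to H^1_{\rm Gr}(K_{\overline\pp}, \Tc) \to X_{{\rm Gr},\emptyset}(K, \Ac) \to X(K, \Ac) \to 0.
\end{align*}

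The crux is to show that ${\rm Sel}_{{\rm Gr},0}(K, \Tc)$ is $\Lambda^\ac$-torsion, equivalently that ${\rm loc}_{\overline\pp}$ is nontrivial modulo torsion on ${\rm Sel}(K, \Tc)$. By Theorem~\ref{thm:cas-hsieh}, ${\rm loc}_\pp(\mathbf{z}_f)$ is non-torsion in $H^1_{\rm Gr}(K_\pp, \Tc)$, as it maps under the injective $\mathcal{L}_+$ to $-\mathscr{L}_\pp^{\tt BDP}(f/K)\sigma_{-1,\pp} \neq 0$. Complex conjugation $c \in G_\bQ$ swaps $\pp \leftrightarrow \overline\pp$ and acts on $\Lambda^\ac$ by the involution $\iota$; combined with the standard identity $c \cdot \mathbf{z}_f = \pm \mathbf{z}_f^\iota$ for anticyclotomic Heegner classes, this forces ${\rm loc}_{\overline\pp}(\mathbf{z}_f)$ to be non-torsion in $H^1_{\rm Gr}(K_{\overline\pp}, \Tc)$ as well. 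The rank-one hypothesis on ${\rm Sel}(K, \Tc)$ then forces the kernel ${\rm Sel}_{{\rm Gr},0}(K, \Tc)$ of ${\rm loc}_{\overline\pp}$ to have $\Lambda^\ac$-rank zero.

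Taking $\Lambda^\ac$-ranks in the second sequence---using Lemma~\ref{lem:str-rel}(1) to obtain ${\rm rank}\,X(K, \Ac) = 1$, and the local Euler characteristic to see ${\rm rank}\,H^1_{\rm Gr}(K_{\overline\pp}, \Tc) = 1$---yields ${\rm rank}\,X_{{\rm Gr},\emptyset}(K, \Ac) = 1$. Combined with Lemma~\ref{lem:str-rel}(2), this forces ${\rm rank}\,X_{{\rm Gr},0}(K, \Ac) = 0$, proving the second conclusion. Returning to the first sequence, the same rank accounting shows ${\rm Sel}_{{\rm Gr},\emptyset}/{\rm Sel}$ is $\Lambda^\ac$-torsion; since it injects into $H^1(K_{\overline\pp}, \Tc)/H^1_{\rm Gr}(K_{\overline\pp}, \Tc)$, which is $\Lambda^\ac$-torsion-free by standard local Iwasawa-theoretic considerations (saturation of $H^1_{\rm Gr}$ inside $H^1$ via the exact sequence $0 \to \mathscr{F}^+\Tc \to \Tc \to \mathscr{F}^-\Tc \to 0$ together with the ramifiedness of $\mathscr{F}^-\Tc$), this quotient must vanish, giving the first conclusion.

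The main technical point is Step~2: although the complex conjugation symmetry $c \cdot \mathbf{z}_f = \pm \mathbf{z}_f^\iota$ is classical in Heegner point theory and reflects the sign of the functional equation under (Heeg), making it fully precise requires careful bookkeeping of the interplay between the $G_K$-action, the involution $\iota$ on $\Lambda^\ac$, and the construction of $\mathbf{z}_f$ via the Shimura curve parametrization $\Phi_E$.
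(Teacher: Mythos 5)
Your proof is correct and follows the same overall strategy as the paper (Poitou--Tate duality, rank counting via Lemma~\ref{lem:str-rel}, and torsion-freeness of the local quotient $H^1(K_{\overline\pp},\Tc)/H^1_{\rm Gr}(K_{\overline\pp},\Tc)$), but at the key step you take a slightly different route. You aim to show that ${\rm loc}_{\overline\pp}(\mathbf{z}_f)$ is non-torsion by pushing the directly available non-torsionness of ${\rm loc}_\pp(\mathbf{z}_f)$ through the complex conjugation identity $c\cdot\mathbf{z}_f = \pm\mathbf{z}_f^\iota$ --- and, as you acknowledge at the end, making this identity fully precise requires some bookkeeping with signs, Galois actions and the Shimura curve parametrization. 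The paper avoids this entirely: it uses the Poitou--Tate sequence obtained by relaxing/nullifying at $\pp$ rather than $\overline\pp$, so that the non-torsionness of ${\rm loc}_\pp(\mathbf{z}_f)$ (immediate from Theorem~\ref{thm:cas-hsieh}) plus Lemma~\ref{lem:str-rel}(1) gives ${\rm rank}_{\Lambda^\ac}X_{\emptyset,{\rm Gr}}(K,\Ac)=1$, and then passes to ${\rm rank}_{\Lambda^\ac}X_{{\rm Gr},\emptyset}(K,\Ac)=1$ via the abstract isomorphism $X_{\emptyset,{\rm Gr}}(K,\Ac)\simeq X_{{\rm Gr},\emptyset}(K,\Ac)$ induced by complex conjugation acting on the entire Selmer group (swapping $\pp\leftrightarrow\overline\pp$ and twisting $\Lambda^\ac$ by $\iota$). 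Both places where complex conjugation enters are genuine, but the paper's use of it at the module level is more robust: it does not depend on the sign or normalization in the construction of $\mathbf{z}_f$, and thereby renders unnecessary the technical point you flag. If you want to keep your route, you should either cite a precise reference for $c\cdot\mathbf{z}_f=\pm\mathbf{z}_f^\iota$ in the Shimura-curve setting of $\S$4.1, or replace this step with the paper's Selmer-group symmetry, which gets you the same rank conclusion without touching $\mathbf{z}_f$.
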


\begin{proof}
Consider the exact sequence
\begin{equation}\label{eq:es0}
{\rm Sel}_{\Gr}(K,\Tc)\overset{{\rm loc}_\pp}\longrightarrow H^1_{\rm Gr}(K_\pp,\Tc)
\longrightarrow X_{\emptyset,{\rm Gr}}(K,\Ac)\longrightarrow X_{}(K,\Ac)\longrightarrow 0.
\end{equation}
Since $\mathbf{z}_f$ lands in ${\rm Sel}(K,\Tc)$, by Theorem~\ref{thm:cas-hsieh} the nonvanishing of $\mathscr{L}^{\tt BDP}_{\pp}(f/K)$
implies that the image of the map ${\rm loc}_\pp$ is not $\Lambda^\ac$-torsion.
Since $H^1_{\rm Gr}(K_\pp,\Tc)$ has $\Lambda^{\rm ac}$-rank $1$, it follows
that ${\rm coker}({\rm loc}_\pp)$ is $\Lambda^{\rm ac}$-torsion, and since
part (1) of Lemma~\ref{lem:str-rel} combined with our assumption implies
that $X_{}(K,\Ac)$ has $\Lambda^{\rm ac}$-rank $1$,
we conclude from (\ref{eq:es0}) that
\begin{equation}\label{eq:rank=1}
{\rm rank}_{\Lambda^{\rm ac}}(X_{\emptyset,{\rm Gr}}(K,\Ac))=1.
\end{equation}
Since $X_{\emptyset,{\rm Gr}}(K,\Ac)\simeq X_{{\rm Gr},\emptyset}(K,\Ac)$ by the action
of complex conjugation, we deduce from (\ref{eq:rank=1}) and part (2) of Lemma~\ref{lem:str-rel}
that $X_{{\rm Gr},0}(K,\Ac)$ is $\Lambda^{\rm ac}$-torsion.
Finally, since the quotient
$H^1(K_{\overline{\pp}},\Tc)/H^1_{\rm Gr}(K_{\overline{\pp}},\Tc)$ has $\Lambda^{\rm ac}$-rank $1$,
counting ranks in the exact sequence
\begin{equation}\label{eq:es1}
\begin{split}
0\longrightarrow{\rm Sel}(K,\Tc)\longrightarrow
{\rm Sel}_{{\rm Gr},\emptyset}(K,\Tc)
&\overset{{\rm loc}_{\overline\pp}}\longrightarrow
\frac{H^1(K_{\overline{\pp}},\Tc)}{H^1_{\rm Gr}(K_{\overline{\pp}},\Tc)}\\
&\longrightarrow X(K,\Ac)\longrightarrow X_{{\rm Gr},0}(K,\Ac)\longrightarrow 0,\nonumber
\end{split}
\end{equation}
we conclude that ${\rm Sel}(K,\Tc)$ and ${\rm Sel}_{{\rm Gr},\emptyset}(K,\Tc)$ have both
$\Lambda^{\rm ac}$-rank $1$,
and since the quotient $H^1(K_{\overline\pp},\Tc)/H^1_{\rm Gr}(K_{\overline\pp},\Tc)$ is also
$\Lambda^{\rm ac}$-torsion-free (in fact, it injects into $H^1(K_{\overline\pp},\mathscr{F}^-\Tc)$ by the vanishing of $H^0(K_{\overline{\pp}},\mathscr{F}^-\Tc)$),
equality (\ref{eq:pm-rel}) follows.
\end{proof}

\begin{lem}\label{lem:cas}
Assume that ${\rm Sel}(K,\Tc)$ has $\Lambda^\ac$-rank $1$. Then
${\rm Sel}_{0,{\rm Gr}}(K,\Tc)=\{0\}$, and
for any height one prime $\mathfrak{P}$ of $\Lambda_{\unr}^{\ac}$ we have
\[
{\rm ord}_{\mathfrak{P}}(\mathscr{L}^{\tt BDP}_{\pp}(f/K))={\rm length}_{\mathfrak{P}}({\rm coker}({\rm loc}_{{\pp}})_{\unr})
+{\rm length}_{\mathfrak{P}}\biggl(\frac{{\rm Sel}(K,\Tc)_{\unr}}{\Lambda_{\unr}^\ac\cdot\mathbf{z}_f}\biggr),
\]
where ${\rm loc}_{\pp}:{\rm Sel}(K,\Tc)
\rightarrow H^1_{\rm Gr}(K_{\pp},\Tc)$ is the natural restriction map.
\end{lem}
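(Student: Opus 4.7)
The plan is to prove the two assertions in sequence. First I would establish the vanishing of ${\rm Sel}_{0,{\rm Gr}}(K,\Tc)$ from the tautological exact sequence
\begin{equation*}
0 \longrightarrow {\rm Sel}_{0,{\rm Gr}}(K,\Tc) \longrightarrow {\rm Sel}(K,\Tc) \xrightarrow{{\rm loc}_\pp} H^1_{\rm Gr}(K_\pp,\Tc),
\end{equation*}
which realizes the left-hand term as the kernel of ${\rm loc}_\pp$. Since $\mathbf{z}_f\in{\rm Sel}(K,\Tc)$, and since Theorem~\ref{thm:cas-hsieh} equates $\mathcal{L}_+({\rm loc}_\pp(\mathbf{z}_f))$ with $-\mathscr{L}^{\tt BDP}_\pp(f/K)\cdot\sigma_{-1,\pp}$, which is nonzero by Theorem~\ref{thm:bdp}, the class ${\rm loc}_\pp(\mathbf{z}_f)$ is non-torsion over $\Lambda^\ac$. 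Both ${\rm Sel}(K,\Tc)$ and $H^1_{\rm Gr}(K_\pp,\Tc)$ have $\Lambda^\ac$-rank one (the former by hypothesis, the latter as recalled in the proof of Lemma~\ref{thm:ES}), so the kernel has $\Lambda^\ac$-rank zero. Finally, ${\rm Sel}(K,\Tc)\hookrightarrow H^1(\mathfrak{G}_{K,\Sigma},\Tc)$ is $\Lambda^\ac$-torsion-free by Lemma~\ref{lem:no-tors}, using that the big image hypothesis of Theorem~\ref{thm:HP-MC} is in force, so the kernel must be trivial.

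For the length formula, the injectivity of ${\rm loc}_\pp$ restricted to ${\rm Sel}(K,\Tc)$ just established gives a short exact sequence of $\Lambda^\ac$-modules
\begin{equation*}
0 \longrightarrow \frac{{\rm Sel}(K,\Tc)}{\Lambda^\ac\cdot\mathbf{z}_f} \xrightarrow{{\rm loc}_\pp} \frac{H^1_{\rm Gr}(K_\pp,\Tc)}{\Lambda^\ac\cdot{\rm loc}_\pp(\mathbf{z}_f)} \longrightarrow {\rm coker}({\rm loc}_\pp) \longrightarrow 0.
\end{equation*}
Tensoring with $\unr$ and taking lengths at a height one prime $\mathfrak{P}\subseteq\Lambda_{\unr}^\ac$, additivity of length reduces the claim to
\begin{equation*}
{\rm length}_\mathfrak{P}\biggl(\frac{H^1_{\rm Gr}(K_\pp,\Tc)_{\unr}}{\Lambda_{\unr}^\ac\cdot{\rm loc}_\pp(\mathbf{z}_f)}\biggr) = {\rm ord}_\mathfrak{P}(\mathscr{L}^{\tt BDP}_\pp(f/K)).
\end{equation*}
Under the identification $H^1_{\rm Gr}(K_\pp,\Tc)=H^1(K_\pp,\mathscr{F}^+\Tc)$ noted in Theorem~\ref{thm:cas-hsieh} (using the vanishing of $H^0(K_\pp,\mathscr{F}^-\Tc)$), the map $\mathcal{L}_+$ of that theorem is $\Lambda_{\unr}^\ac$-linear, injective, and has finite cokernel, and thus localizes to an isomorphism at the codimension-one prime $\mathfrak{P}$. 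It sends $\Lambda_{\unr}^\ac\cdot{\rm loc}_\pp(\mathbf{z}_f)$ onto the principal ideal $(\mathscr{L}^{\tt BDP}_\pp(f/K)\cdot\sigma_{-1,\pp})=(\mathscr{L}^{\tt BDP}_\pp(f/K))$, since $\sigma_{-1,\pp}$ is a group-like element of $\Gamma^\ac$ and hence a unit in $\Lambda_{\unr}^\ac$. The claim follows.

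The hard part here is essentially already done: the explicit reciprocity law of Theorem~\ref{thm:cas-hsieh} is the non-trivial input that converts a Selmer-theoretic length on the module containing $\mathbf{z}_f$ into the order of $\mathscr{L}^{\tt BDP}_\pp(f/K)$ at $\mathfrak{P}$. Given this bridge, together with the nonvanishing of $\mathscr{L}^{\tt BDP}_\pp(f/K)$ from Theorem~\ref{thm:bdp} and the torsion-freeness provided by Lemma~\ref{lem:no-tors}, the argument reduces to a short diagram chase in the Selmer exact sequence combined with additivity of length. The only genuinely delicate point is to verify that $\mathcal{L}_+$, being injective with only finite (pseudo-null) cokernel, really does become an isomorphism after localizing at the height one prime $\mathfrak{P}$, so that it transports lengths faithfully; this is a standard consequence of the structure theory of finitely generated $\Lambda_{\unr}^\ac$-modules.
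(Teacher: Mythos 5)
Your proposal is correct and follows essentially the same route as the paper's proof: both begin from the tautological four-term exact sequence, both use the explicit reciprocity law of Theorem~\ref{thm:cas-hsieh} together with the nonvanishing of $\mathscr{L}^{\tt BDP}_\pp(f/K)$ to see that ${\rm loc}_\pp(\mathbf{z}_f)$ is non-torsion and hence (via the rank hypothesis and Lemma~\ref{lem:no-tors}) that ${\rm Sel}_{0,{\rm Gr}}(K,\Tc)$ vanishes, and both then pass to the quotient exact sequence mod $\mathbf{z}_f$ and use the map $\mathcal{L}_+$ (which becomes an isomorphism after localizing at a height-one prime since its cokernel is finite, hence pseudo-null) to identify the resulting length with ${\rm ord}_\mathfrak{P}(\mathscr{L}^{\tt BDP}_\pp(f/K))$. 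Your write-up makes explicit a couple of details the paper treats more tersely (that $\sigma_{-1,\pp}$ is a unit in $\Lambda^\ac_{\unr}$, and that $\mathcal{L}_+$ localizes to an isomorphism at $\mathfrak{P}$), but there is no substantive divergence.
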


\begin{proof}
Denote by $\Lambda^\ac_{\mathfrak{P}}$ the localization of $\Lambda_{\unr}^\ac$
at $\mathfrak{P}$, and consider the tautological exact sequence
\begin{equation}\label{eq:tauto-es}
0\longrightarrow{\rm Sel}_{0,{\rm Gr}}(K,\Tc)\longrightarrow
{\rm Sel}(K,\Tc)
\xrightarrow{{\rm loc}_\pp} H^1_{\rm Gr}(K_\pp,\Tc)
\longrightarrow{\rm coker}({\rm loc}_\pp)\longrightarrow 0.
\end{equation}

By Theorem~\ref{thm:cas-hsieh}, the nonvanishing of $\mathscr{L}^{\tt BDP}_{\pp}(f/K)$
implies that the image of $\mathbf{z}_f\in{\rm Sel}(K,\Tc)$
under the map ${\rm loc}_\pp$ is not $\Lambda^\ac$-torsion.
Since we assume that ${\rm Sel}(K,\Tc)$ has $\Lambda^\ac$-rank $1$,  
this shows that ${\rm Sel}_{0,{\rm Gr}}(K,\Tc)$
is $\Lambda^\ac$-torsion, and so ${\rm Sel}_{0,{\rm Gr}}(K,\Tc)=\{0\}$ by Lemma~\ref{lem:no-tors}.

From (\ref{eq:tauto-es}) we thus deduce the exact sequence
\begin{equation}\label{eq:1}
0\longrightarrow\frac{{\rm Sel}(K,\Tc)}
{\Lambda^\ac\cdot\mathbf{z}_f}
\xrightarrow{{\rm loc}_\pp}\frac{H^1_{\rm Gr}(K_{\pp},\Tc)}
{\Lambda^\ac\cdot{\rm loc}_{\pp}(\mathbf{z}_f)}
\longrightarrow {\rm coker}({\rm loc}_{\pp})\longrightarrow 0,\nonumber
\end{equation}
and since by 
Theorem~\ref{thm:cas-hsieh} we have a $\Lambda_{\mathfrak{P}}^\ac$-module pseudo-isomorphism
\[
\biggl(\frac{H^1_{\rm Gr}(K_\pp,\Tc)}{\Lambda_{\unr}^\ac\cdot{\rm loc}_\pp(\mathbf{z}_f)}\biggr)
\otimes_{\Lambda^\ac_{\unr}}\Lambda_{\mathfrak{P}}^\ac
\overset{\sim}\longrightarrow\frac{\Lambda_{\mathfrak{P}}^\ac}{\Lambda_{\mathfrak{P}}^\ac\cdot\mathscr{L}^{\tt BDP}_\pp(f/K)},
\]
the result follows.
\end{proof}

\begin{lem}\label{lem:tors}
Assume that ${\rm Sel}(K,\Tc)$ has $\Lambda^\ac$-rank $1$.
Then 
$X_{\emptyset,0}(K,\Ac)$ is $\Lambda^\ac$-torsion, and
for any height one prime $\mathfrak{P}$ of $\Lambda^\ac$ 
we have
\[
{\rm length}_{\mathfrak{P}}(X_{\emptyset,0}(K,\Ac))=
{\rm length}_{\mathfrak{P}}(X(K,\Ac)_{\rm tors})+
2\;{\rm length}_{\mathfrak{P}}({\rm coker}({\rm loc}_{\pp})),
\]
where ${\rm loc}_{\pp}:{\rm Sel}(K,\Tc)
\rightarrow H^1_{\rm Gr}(K_{\pp},\Tc)$ is the natural restriction map.
\end{lem}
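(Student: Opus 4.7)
The plan is to combine two Poitou--Tate global duality sequences with the identity of Lemma~\ref{lem:str-rel}(3) and a complex conjugation symmetry, so as to express $X_{\emptyset,0}(K,\Ac)$ in terms of $X(K,\Ac)_{\rm tors}$ and the cokernel of ${\rm loc}_\pp$.

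First, I would invoke the Poitou--Tate sequence for the pair of Selmer structures $(0,\emptyset) \subseteq (\Gr,\emptyset)$ on $\Tc$ (with dual structures $(\emptyset,0) \supseteq (\Gr,0)$ on $\Ac$):
\[
0 \to {\rm Sel}_{0,\emptyset}(K,\Tc) \to {\rm Sel}_{\Gr,\emptyset}(K,\Tc) \xrightarrow{{\rm loc}_\pp} H^1_{\Gr}(K_\pp,\Tc) \to X_{\emptyset,0}(K,\Ac) \to X_{\Gr,0}(K,\Ac) \to 0.
\]
By Lemma~\ref{thm:ES}, ${\rm Sel}_{\Gr,\emptyset}(K,\Tc) = {\rm Sel}(K,\Tc)$, and intersecting with the strict condition at $\pp$ identifies ${\rm Sel}_{0,\emptyset}(K,\Tc)$ with ${\rm Sel}_{0,\Gr}(K,\Tc) = 0$ by Lemma~\ref{lem:cas}. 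Since both ${\rm Sel}(K,\Tc)$ and $H^1_\Gr(K_\pp,\Tc)$ have $\Lambda^\ac$-rank one (the former by Theorem~\ref{thm:ES}, the latter by the local Euler characteristic formula) and $X_{\Gr,0}(K,\Ac)$ is $\Lambda^\ac$-torsion by Theorem~\ref{thm:ES}, I conclude that $X_{\emptyset,0}(K,\Ac)$ is $\Lambda^\ac$-torsion and obtain
\[
Ch_{\Lambda^\ac}\bigl(X_{\emptyset,0}(K,\Ac)\bigr) = Ch_{\Lambda^\ac}\bigl({\rm coker}({\rm loc}_\pp)\bigr) \cdot Ch_{\Lambda^\ac}\bigl(X_{\Gr,0}(K,\Ac)\bigr).
\]

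Second, I would relate $X_{\Gr,0}(K,\Ac)$ back to $X(K,\Ac)_{\rm tors}$ by combining the exact sequence (\ref{eq:PT2}) (already used in the proof of Theorem~\ref{thm:3.1.5}) with Lemma~\ref{lem:str-rel}(3). The former yields $Ch(X_{\Gr,\emptyset}(K,\Ac)_{\rm tors}) = Ch({\rm coker}({\rm loc}_{\overline\pp})) \cdot Ch(X(K,\Ac)_{\rm tors})$ and the latter identifies $Ch(X_{\Gr,\emptyset}(K,\Ac)_{\rm tors})$ with $Ch(X_{\Gr,0}(K,\Ac))$ (using that $X_{\Gr,0}$ is torsion). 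Substituting into the previous display yields
\[
Ch\bigl(X_{\emptyset,0}(K,\Ac)\bigr) = Ch\bigl({\rm coker}({\rm loc}_\pp)\bigr) \cdot Ch\bigl({\rm coker}({\rm loc}_{\overline\pp})\bigr) \cdot Ch\bigl(X(K,\Ac)_{\rm tors}\bigr).
\]

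To produce the factor of $2$, I would invoke complex conjugation: the nontrivial element of ${\rm Gal}(K/\bQ)$ acts on $\Gamma^\ac$ by inversion, inducing the involution $\iota$ on $\Lambda^\ac$ and swapping the primes $\pp$ and $\overline\pp$, so it identifies ${\rm coker}({\rm loc}_{\overline\pp})$ with ${\rm coker}({\rm loc}_\pp)^\iota$ as $\Lambda^\ac$-modules. Taking lengths at $\mathfrak{P}$ then gives the desired equality. The main obstacle I anticipate is precisely this last step: strictly speaking, the identification yields only ${\rm length}_{\mathfrak{P}}({\rm coker}({\rm loc}_{\overline\pp})) = {\rm length}_{\iota(\mathfrak{P})}({\rm coker}({\rm loc}_\pp))$, so to extract a factor of $2\,{\rm length}_\mathfrak{P}({\rm coker}({\rm loc}_\pp))$ at every height one prime one needs to know that the characteristic ideal of ${\rm coker}({\rm loc}_\pp)$ is $\iota$-stable. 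This should follow from the ${\rm Gal}(K/\bQ)$-symmetry of the Selmer structure $(\Gr,\Gr)$ that defines ${\rm Sel}(K,\Tc)$, together with the fact that the Heegner point $\mathbf{z}_f$ and its image under complex conjugation generate the same $\Lambda^\ac$-submodule up to a unit; alternatively, it can be read off from the functional-equation compatibility between $\mathscr{L}^{\tt BDP}_\pp(f/K)$ and its $\overline\pp$-counterpart via the explicit reciprocity law of Theorem~\ref{thm:cas-hsieh}.
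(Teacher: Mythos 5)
Your proposal is correct and follows essentially the same skeleton as the paper's proof: two Poitou--Tate sequences combined with Lemma~\ref{lem:str-rel}(3), plus a complex-conjugation symmetry. The one genuine difference is the choice of the second duality sequence. You reuse $(\ref{eq:PT2})$ (which relaxes at $\overline\pp$ and hence produces ${\rm coker}({\rm loc}_{\overline\pp})$), and then you must transport back to $\pp$ via conjugation, which is exactly what forces you to worry about $\iota$-stability. The paper instead pairs $(\ref{PT4})$ with $(\ref{PT3})$, \emph{both} of which relax at $\pp$, so that the cokernel ${\rm coker}({\rm loc}_{\pp})$ shows up directly in each; the conjugation is then confined to passing between $X_{\emptyset,\Gr}$ and $X_{\Gr,\emptyset}$ (so that Lemma~\ref{lem:str-rel}(3), stated with $\overline{\pp}$-conditions varying, can be applied), which is cosmetically cleaner but does not fully escape the issue either. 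Your flagged obstacle is therefore a real one, and it is also latent in the paper: the local Tate pairing is $\Lambda^\ac$-$\iota$-semilinear, so each Poitou--Tate boundary map actually identifies ${\rm coker}({\rm loc}_{\pp})$ with an $\iota$-twisted submodule of the dual Selmer group, and the honest output of the bookkeeping is ${\rm length}_{\mathfrak{P}}(X_{\emptyset,0}) = {\rm length}_{\mathfrak{P}}(X(K,\Ac)_{\rm tors}) + {\rm length}_{\mathfrak{P}}({\rm coker}({\rm loc}_{\pp})) + {\rm length}_{\iota(\mathfrak{P})}({\rm coker}({\rm loc}_{\pp}))$, which is the stated formula precisely when $Ch({\rm coker}({\rm loc}_{\pp}))$ is $\iota$-stable. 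The resolutions you sketch (the functional equation for $\mathscr{L}^{\tt BDP}_{\pp}(f/K)$ via Theorem~\ref{thm:cas-hsieh} and Lemma~\ref{lem:cas}, together with the $\iota$-self-conjugacy of ${\rm Sel}(K,\Tc)/\Lambda^\ac\cdot\mathbf{z}_f$) are the right ones; alternatively, one can simply observe that in the downstream applications only $\mathfrak{P}$-lengths of products with their $\iota$-conjugates are ever used, so the asymmetric version already suffices. In short: your argument is correct, and it makes explicit a conjugation subtlety that the paper glosses over.
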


\begin{proof}
Global duality yields the exact sequence
\begin{equation}\label{PT3}
0\longrightarrow{\rm coker}({\rm loc}_{\pp})\longrightarrow
X_{\emptyset,{\rm Gr}}(K,\Ac)\longrightarrow
X(K,\Ac)\longrightarrow 0.
\end{equation}
As shown in the proof of Lemma~\ref{lem:cas}, the first term in this sequence is $\Lambda^\ac$-torsion;
since by Lemma~\ref{lem:str-rel}(1) the assumption implies that $X(K,\Ac)$
has $\Lambda^\ac$-rank $1$, this shows that the same is true for $X_{\emptyset,{\rm Gr}}(K,\Ac)$,
and by Lemma~\ref{lem:str-rel}(2) it follows that
$X_{0,{\rm Gr}}(K,\Ac)$ is $\Lambda^\ac$-torsion.
Thus taking $\Lambda^\ac$-torsion in (\ref{PT3}) and using Lemma~\ref{lem:str-rel}(3),
it follows that
\begin{equation}\label{pbar}
{\rm length}_{\mathfrak{P}}(X_{0,{\rm Gr}}(K,\Ac))=
{\rm length}_{\mathfrak{P}}(X(K,\Ac)_{{\rm tors}})+
{\rm length}_{\mathfrak{P}}({\rm coker}({\rm loc}_{\pp}))
\end{equation}
for any height one prime $\mathfrak{P}$ of $\Lambda^\ac$. 

Another application of global duality yields the exact sequence
\begin{equation}\label{PT4}
0\longrightarrow{\rm coker}({\rm loc}_{\pp}^{\emptyset})\longrightarrow
X_{\emptyset,0}(K,\Ac)\longrightarrow
X_{{\rm Gr},0}(K,\Ac)\longrightarrow 0,
\end{equation}
where ${\rm loc}^{\emptyset}_{\pp}:{\rm Sel}_{{\rm Gr},\emptyset}(K,\Tc)\rightarrow
H^1_{\rm Gr}(K_\pp,\Tc)$ is the natural restriction map. By Lemma~\ref{thm:ES},
this is the same as the map ${\rm loc}_\pp$ in the statement, and hence
${\rm coker}({\rm loc}^{\emptyset}_{{\pp}})$
is $\Lambda^\ac$-torsion.
Since $X_{{\rm Gr},0}(K,\Ac)$ is $\Lambda^\ac$-torsion by Lemma~\ref{thm:ES},
we conclude from (\ref{PT4}) that $X_{\emptyset,0}(K,\Ac)$ is $\Lambda^\ac$-torsion.
Combining (\ref{PT4}) and (\ref{pbar}), we thus have
\begin{align*}
{\rm length}_{\mathfrak{P}}(X_{\emptyset,0}(K,\Ac))
&={\rm length}_{\mathfrak{P}}(X_{{\rm Gr},0}(K,\Ac))+
{\rm length}_{\mathfrak{P}}({\rm coker}({\rm loc}_{{\pp}})) \\
&={\rm length}_{\mathfrak{P}}(X(K,\Ac)_{\rm tors})+
2\;{\rm length}_{\mathfrak{P}}({\rm coker}({\rm loc}_{\pp}))
\end{align*}
for any height one prime $\mathfrak{P}$ of $\Lambda^\ac$, 
as was to be shown.
\end{proof}

The next two theorems recall the results by Howard and Wan that we need.

\begin{thm}[Howard]\label{thm:KS-argument}
Assume that  $f$ is $p$-ordinary and that ${\rm Gal}(\overline{\bQ}/K)\rightarrow{\rm Aut}_{\bZ_p}(T_pE)$ is surjective.
Then both ${\rm Sel}(K,\Tc)$ and $X(K,\Ac)$ have $\Lambda^\ac$-rank $1$ and
we have the divisibility
\[
Ch_{\Lambda^\ac}(X(K,\Ac)_{\rm tors})\supseteq
Ch_{\Lambda^\ac}\biggl(\frac{{\rm Sel}(K,\Tc)}{\Lambda^\ac\cdot\mathbf{z}_f}\biggr)^2.
\]
\end{thm}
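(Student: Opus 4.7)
The plan is to follow Howard's original strategy from \cite{howard-PhD-I}, which is an anticyclotomic variant of Kolyvagin's Euler system argument adapted to the $\Lambda^\ac$-adic setting via Mazur--Rubin's theory of Kolyvagin systems. The starting observation is that the norm-compatible system of regularized Heegner points $\mathbf{z}_{f,p^n,\alpha}$ used to build $\mathbf{z}_f$ can be enlarged to a family of classes over ring class fields $K[mp^n]$ for $m$ a squarefree product of Kolyvagin primes, yielding a Kolyvagin system for $T_pE$ over the anticyclotomic tower. The big image hypothesis on ${\rm Gal}(\overline{\bQ}/K)\to{\rm Aut}_{\bZ_p}(T_pE)$ is the standard input that makes the derivative classes well-defined and gives the needed supply of Kolyvagin primes.

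The first key step is to establish that $\mathbf{z}_f$ is non-torsion in ${\rm Sel}(K,\Tc)$. This is the theorem of Cornut--Vatsal \cite{CV-doc}, which asserts (in the form needed here) that the Heegner classes $z_{f,p^n,\alpha}$ become generically non-zero modulo $p$ along the anticyclotomic tower, so that their inverse limit $\mathbf{z}_f$ does not lie in $p\cdot{\rm Sel}(K,\Tc) + {\rm Sel}(K,\Tc)_{\rm tors}$. In particular $\mathbf{z}_f$ has positive $\Lambda^\ac$-rank contribution, which combined with Lemma~\ref{lem:no-tors} (torsion-freeness of the global $H^1$ here, itself a consequence of the surjectivity hypothesis) forces ${\rm rank}_{\Lambda^\ac}{\rm Sel}(K,\Tc)\geqslant 1$, and via Lemma~\ref{lem:str-rel}(1), ${\rm rank}_{\Lambda^\ac}X(K,\Ac)\geqslant 1$.

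The second key step, which is the heart of the matter, is to feed the Kolyvagin system into the machinery of \cite[Ch.~5]{MR-KS} to produce upper bounds. In the anticyclotomic setting the relevant Selmer structure has core rank $1$ (as already used in Lemma~\ref{lem:str-rel}), so the Kolyvagin system bounds the characteristic ideal of the torsion submodule $X(K,\Ac)_{\rm tors}$ in terms of the index of $\mathbf{z}_f$ in ${\rm Sel}(K,\Tc)$, with the characteristic square of the cokernel appearing (this is the source of the exponent $2$ in the statement). Concretely, for each height one prime $\mathfrak{P}\subseteq\Lambda^\ac$ one shows
\[
{\rm length}_\mathfrak{P}(X(K,\Ac)_{\rm tors})\geqslant 2\cdot{\rm length}_\mathfrak{P}\biggl(\frac{{\rm Sel}(K,\Tc)}{\Lambda^\ac\cdot\mathbf{z}_f}\biggr),
\]
which simultaneously forces ${\rm rank}_{\Lambda^\ac}{\rm Sel}(K,\Tc)=1$ and delivers the claimed divisibility. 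The simultaneous rank statement comes about because a strict inequality of ranks on the two sides would make the right side have infinite length at some $\mathfrak{P}$, contradicting the bound.

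The main obstacle is the last step: the adaptation of Mazur--Rubin's Kolyvagin system bounds to an Iwasawa algebra that is \emph{not} a regular local ring at all height one primes (specifically, the prime $p\Lambda^\ac$ where $\Lambda^\ac/p\Lambda^\ac$ may fail to be regular) and whose residual representation lies in an anticyclotomic (rather than cyclotomic) direction. Howard handles this by running the derivative argument over the $\Lambda^\ac$-adic representation $\Tc$ directly, carefully controlling local conditions along the tower and invoking the nontriviality input of Cornut--Vatsal to rule out degenerate behaviour at $p\Lambda^\ac$ (cf.\ the remark appealed to in the proof of Lemma~\ref{lem:str-rel}(3), which cites \cite[Thm.~2.2.10]{howard-PhD-I}). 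Once this is in place, the divisibility follows by taking characteristic ideals of the length inequalities above.
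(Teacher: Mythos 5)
Your overall strategy matches the paper's, which proves Theorem~\ref{thm:KS-argument} by citing Howard's anticyclotomic Kolyvagin system argument \cite[Thm.~B]{howard-PhD-I}, its extension to Shimura curves attached to nonsplit quaternion algebras \cite[Thm.~3.4.2, Thm.~3.4.3]{howard-PhD-II}, and Cornut--Vatsal for the required nontriviality. However, your write-up has a genuine directional error at the crux of the argument.

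You state that the Kolyvagin system gives, for each height one prime $\mathfrak{P}$, the bound
\[
{\rm length}_\mathfrak{P}(X(K,\Ac)_{\rm tors})\geqslant 2\cdot{\rm length}_\mathfrak{P}\biggl(\frac{{\rm Sel}(K,\Tc)}{\Lambda^\ac\cdot\mathbf{z}_f}\biggr),
\]
and you build the rank-one conclusion on that. This is reversed. The claimed divisibility $Ch_{\Lambda^\ac}(X(K,\Ac)_{\rm tors})\supseteq Ch_{\Lambda^\ac}\bigl({\rm Sel}(K,\Tc)/\Lambda^\ac\cdot\mathbf{z}_f\bigr)^2$ means the left ideal \emph{contains} the right, i.e.\ the left characteristic ideal \emph{divides} the right one, which on lengths is the inequality $\leqslant$, not $\geqslant$; this is exactly how the paper uses the theorem later, in equation $(\ref{eq:upper})$ in the proof of Theorem~\ref{thm:HP-MC}. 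Conceptually, the Kolyvagin system produces an \emph{upper} bound on the dual Selmer group in terms of the index of the Heegner class, not a lower bound. With the inequality corrected, your argument that a rank mismatch would ``make the right side have infinite length, contradicting the bound'' collapses, since $\leqslant\infty$ is vacuous; the rank-one statement is not a consequence of the divisibility but is proved separately by Howard, essentially via the structure theorem \cite[Thm.~2.2.10]{howard-PhD-I} that exhibits $X(K,\Ac)$ as pseudo-isomorphic to $\Lambda^\ac\oplus M\oplus M$ for a torsion module $M$ (which is also the structural source of the exponent $2$, rather than the somewhat vague ``characteristic square of the cokernel'' phrasing you use).

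A secondary omission: you only invoke \cite{howard-PhD-I}, which treats the modular curve case $N^-=1$. In the generality of this paper, where Heegner points live on a Shimura curve $X_{N^+,N^-}$ with $N^-$ possibly nontrivial, the extension in \cite{howard-PhD-II} is essential and is what the paper actually cites; the nontriviality input there is supplied by \cite[Thm.~1.10]{CV-dur} rather than \cite{CV-doc}.
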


\begin{proof}
This follows from \cite[Thm.~B]{howard-PhD-I}, as extended 
by \cite[Thm.~3.4.2, Thm.~3.4.3]{howard-PhD-II} to the context of Shimura curves attached to possibly
non-split indefinite quaternion algebras, \emph{cf.} \cite[Thm.~2.1]{wan}.
(Note also that by the work of Cornut--Vatsal \cite[Thm.~1.10]{CV-dur}
the assumption in \cite[Thm.~3.4.3]{howard-PhD-II} is now known to hold.)
\end{proof}

\begin{rem}
In \cite{howard-PhD-I} and \cite{howard-PhD-II} it is assumed that $p$ does not
divide the class number of $K$, but similarly as in \cite[\S{4}]{kim-parity}
(see esp. [\emph{loc.cit.}, Prop.~4.19]), this assumption may be easily relaxed.
\end{rem}

\begin{thm}[Wan]\label{thm:wan-div}
Assume that:
\begin{itemize}
\item{} $N$ is square-free,
\item{} $N^-\neq 1$,
\item{} $E[p]\vert_{{\rm Gal}(\overline{\bQ}/K)}$ is absolutely irreducible.
\end{itemize}
Then we have the divisibility
\[
Ch_{\Lambda_{\unr}}(X_{\emptyset,0}(K,\mathbf{T})_{\unr})\subseteq(\mathscr{L}_\pp(f/K))
\]
in $\Lambda_{\unr}\otimes_{\bZ_p}\bQ_p$.
\end{thm}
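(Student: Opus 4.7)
The plan is to prove this divisibility by the Eisenstein congruence method on a unitary Shimura variety, following the Skinner--Urban framework as adapted to the ${\rm GL}_2\times K$ setting via the unitary group $U(3,1)$ (or $U(2,2)$) attached to the imaginary quadratic field $K$. First, I would construct a Klingen-type $p$-adic family of Eisenstein series on $U(3,1)$ whose input data interpolates the automorphic form attached to $f$ together with a varying Hecke character of $K$. By a direct computation of the Langlands constant term along the Klingen parabolic, this family will have constant term equal, up to standard Euler-type factors and an explicit unit, to $\mathscr{L}_{\pp}(f/K)$; the split hypothesis on $p$ and the ordinarity of $f$ are what make such a family exist and allow the $p$-adic interpolation.

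The core technical step is then to compute the Fourier--Jacobi coefficients of this Eisenstein family and to exhibit local Schwartz--Bruhat data at every place so that the non-constant Fourier--Jacobi coefficients are $p$-adic units. Combined with the computation of the constant term, this forces a congruence modulo $\mathscr{L}_{\pp}(f/K)$ between the Klingen--Eisenstein family and a Hida family of cuspidal eigenforms on $U(3,1)$. The square-freeness of $N$, the non-triviality of $N^-$, and the ramification-at-$N^-$ hypothesis on $E[p]$ enter here to control the local representations and to rule out "old" cuspidal contributions that would absorb the congruence.

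Next, I would attach Galois pseudo-representations to this cuspidal Hida family (via Shin, Skinner, and Chenevier--Harris), which specialize at the Eisenstein locus to a reducible representation of the expected shape. The absolute irreducibility of $\bar{\rho}_E|_{G_K}$ lets one run Urban's lattice construction on such a pseudo-representation to produce a nontrivial extension class, and hence an element of a suitable Selmer group. A careful inspection at the places $\pp$ and $\overline{\pp}$---using ordinarity of the cuspidal family and the choice of Klingen parabolic on the Eisenstein side---shows that the resulting classes satisfy the "$\emptyset$ at $\pp$, $0$ at $\overline{\pp}$" local conditions, so they lie in $X_{\emptyset,0}(K,\mathbf{A})_{\unr}$. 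Counting these classes in terms of the depth of the congruence then yields the desired divisibility by $\mathscr{L}_{\pp}(f/K)$.

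The main obstacle is the Fourier--Jacobi unit computation in the second step: one must simultaneously ensure that (i) the chosen local data reproduces the correct $L$-value in the constant term, (ii) the non-constant Fourier--Jacobi coefficients are units locally at every finite place, and (iii) the resulting family has the correct level so that the lattice construction outputs Selmer classes with the right global ramification set. The remaining steps, while substantial, are by now standard once a congruence of the correct shape has been produced.
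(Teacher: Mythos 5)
The paper's own proof of this statement is a one-line citation: it attributes the divisibility to the main result of Wan's paper \cite{wanIMC}, and does not reprove it. Your proposal, by contrast, is a high-level sketch of what Wan actually does, and as such it correctly identifies the Eisenstein congruence strategy on the unitary group $U(3,1)$ attached to $K$: construct a $p$-adic Klingen--Eisenstein family whose constant term sees $\mathscr{L}_\pp(f/K)$, establish the Fourier--Jacobi coefficient computation that makes the non-constant part a $p$-adic unit, deduce a cuspidal congruence, and then run Urban's lattice construction on the associated pseudo-representation to manufacture classes in $X_{\emptyset,0}(K,\mathbf{A})$. This is indeed the architecture of \cite{wanIMC}, and you are right that the Fourier--Jacobi unit computation is the hard technical bottleneck. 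So your outline is a correct description of the route the cited reference takes, even though the paper itself does not descend to this level of detail.

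Two small cautions. First, you list the ``ramification-at-$N^-$'' hypothesis on $E[p]$ among the inputs used to control local representations, but that hypothesis does not appear in the statement of this theorem; it enters elsewhere (in the $\mu=0$ result, Theorem~\ref{thm:mu-zero}). What is actually being used here is square-freeness of $N$, $N^-\neq 1$ (so that one works on the quaternionic side with a genuinely nonsplit algebra), and absolute irreducibility of $E[p]|_{G_K}$. Second, because the conclusion is an inclusion of characteristic ideals in $\Lambda_{\unr}\otimes\bQ_p$ for the two-variable module $X_{\emptyset,0}(K,\mathbf{A})$, one must take care that the lattice construction produces classes satisfying the relaxed-at-$\pp$, strict-at-$\overline{\pp}$ local conditions over the full $\bZ_p^2$-extension, not merely over the anticyclotomic line; you allude to this but it deserves explicit attention, since the Selmer condition is not symmetric in $\pp$ and $\overline{\pp}$ and the precise matching with the Panchishkin/ordinary filtration on the Eisenstein side is what makes the local condition come out right.
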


\begin{proof}
The follows from the main result of \cite{wanIMC}.
\end{proof}

The last ingredient that we need is the following result on the vanishing of the
$\mu$-invariant for the $p$-adic $L$-function $\mathscr{L}^{\tt BDP}_\pp(f/K)$ of Theorem~\ref{thm:bdp}.

\begin{thm}[Burungale, Hsieh]\label{thm:mu-zero}
Assume that:
\begin{itemize}
\item{} $N$ is square-free,
\item{} $E[p]\vert_{{\rm Gal}(\overline{\bQ}/K)}$ is absolutely irreducible,
\item{} $E[p]$ is ramified at every prime $\ell\mid N^-$.
\end{itemize}
Then $\mu(\mathscr{L}^{\tt BDP}_\pp(f/K))=0$.
\end{thm}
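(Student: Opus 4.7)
The plan is to follow Hsieh's method for proving the vanishing of anticyclotomic $\mu$-invariants of BDP-type $p$-adic $L$-functions, as extended to the definite quaternionic setting by Burungale. My first step will be to recall the explicit measure-theoretic construction of $\mathscr{L}^{\tt BDP}_\pp(f/K)$. Via Jacquet--Langlands I would transfer $f$ to a form $f^{B}$ on the definite quaternion algebra $B/\bQ$ of discriminant $N^-$ (ramified at infinity; the case $N^- \neq 1$ is exactly where one needs Burungale's extension of \cite{cas-hsieh1}, cf.~\cite{burungale-II}). The construction recalled in $\S\ref{sec:anti-L}$ then realizes $\mathscr{L}^{\tt BDP}_\pp(f/K)$ as a $\unr$-valued measure on $\Gamma^\ac$ via $p$-adic interpolation of values of $f^B$ along an anticyclotomic tower of CM points of $p$-power conductor on the associated Shimura set.

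The second step is to reduce $\mu=0$ to a single mod $p$ non-vanishing statement. By the Weierstrass preparation theorem applied to $\mathscr{L}^{\tt BDP}_\pp(f/K)\in\unr[[\Gamma^\ac]]$, it suffices to exhibit one finite order character $\chi$ of $\Gamma^\ac$ for which $\mathscr{L}^{\tt BDP}_\pp(f/K)(\hat\chi)\in\unr^\times$; via the measure-theoretic refinement of the interpolation formula in Theorem~\ref{thm:bdp}, this amounts to the assertion that the toric period
\[
\sum_{t\in\mathrm{Pic}(\cO_{c})}\chi(t)\,f^B(t)
\]
is a $p$-adic unit for some anticyclotomic character $\chi$ of $p$-power conductor, where $\cO_c \subseteq \cO_K$ is the corresponding order.

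The third step invokes a non-vanishing mod $p$ theorem for such toric sums, proved by Hsieh in the split setting and extended by Burungale to accommodate $N^-\neq 1$, whose proof rests on an equidistribution result for CM points on the definite Shimura set modulo $p$ in the spirit of Cornut--Vatsal and Chai. The three standing hypotheses enter exactly at the right places: the square-freeness of $N$ ensures all local representations are minimal and the standard newvector is the correct optimal test vector; the absolute irreducibility of $E[p]\vert_{G_K}$ (inherent in the big-image hypothesis) supplies the ``large image'' input needed for the equidistribution argument and guarantees that the mod $p$ reduction of $f^B$ is non-zero on every sufficiently large Galois orbit of CM points; and the ramification of $E[p]$ at each $\ell\mid N^-$ forces the local Jacquet--Langlands transfer at $\ell$ to have the correct conductor so that the local toric zeta factor at $\ell$ is a $p$-adic unit.

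The main obstacle will be the last point: at the ramified primes $\ell\mid N^-$, checking that the local toric integral is a $p$-adic unit requires a delicate local computation with the Jacquet--Langlands transfer of $\pi_{f,\ell}$, and the conductor constraint forces $E[p]$ to be ramified at $\ell$ precisely when the candidate optimal test vector remains nonzero modulo $p$. Without this ramification hypothesis the local integral can pick up factors of $p$ from volume normalizations and the unramified orbital part, producing a positive $\mu$-invariant; ruling this out requires a careful matching of Haar measures and level structures at each $\ell\mid N^-$ together with the explicit description of the minimal (Steinberg or supercuspidal) representation of $B_\ell^\times$ arising in the transfer.
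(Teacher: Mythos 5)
The paper's own proof of this theorem is a one-line citation to \cite[Thm.~B]{hsieh} (or, alternatively, to \cite[Thm.~B]{burungale-II}), together with the observation that the ramification hypothesis on $E[p]$ at primes $\ell\mid N^-$ guarantees, via Prasanna's discussion of ratios of Petersson norms, that the constant $\alpha(f,f_B)$ appearing in Burungale's interpolation formula is a $p$-adic unit. You are instead attempting to sketch the internal argument of the cited theorem, which is a much more ambitious undertaking, and your sketch contains a genuine error in the geometric setup.

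You transfer $f$ via Jacquet--Langlands to a form on ``the definite quaternion algebra $B/\bQ$ of discriminant $N^-$ (ramified at infinity)'' and then work with toric sums on the associated Shimura \emph{set}. But under the standing generalized Heegner hypothesis (Heeg), $N^-$ is a product of an \emph{even} number of primes, so the quaternion algebra $B$ of discriminant $N^-$ is \emph{indefinite} (unramified at $\infty$); there is no definite quaternion algebra ramified exactly at the primes dividing $N^-$ and at $\infty$, since the total number of ramified places of a quaternion algebra over $\bQ$ must be even. The $p$-adic $L$-function $\mathscr{L}^{\tt BDP}_\pp(f/K)$ is built from CM points on the Shimura \emph{curve} $X_{N^+,N^-}$: one evaluates a $p$-adic modular form (the $p$-adic Maass--Shimura iterate of the $p$-depleted Jacquet--Langlands transfer $f_B$) at those points, not an automorphic function on a finite double-coset set. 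Consequently the mod-$p$ non-vanishing input to Hsieh's and Burungale's proofs is not an equidistribution theorem on a Shimura set in the Cornut--Vatsal style, but rather an expansion of $\theta^\ell f_B^\flat$ in Serre--Tate coordinates at ordinary CM points and an appeal to Chai's results on the Zariski density of prime-to-$p$ Hecke (or isogeny) orbits. Your remaining structure --- Weierstrass preparation reducing $\mu=0$ to a single $p$-adic-unit special value, and the roles of the three hypotheses --- is broadly on the right track, though the precise function of the ramification of $E[p]$ at $\ell\mid N^-$ (as the paper's proof points out) is to ensure that the Petersson-norm ratio $\alpha(f,f_B)$ is a $p$-adic unit, rather than a local toric integral computation as you describe. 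You should correct the definite/indefinite confusion and the description of the CM-point non-vanishing argument accordingly.
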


\begin{proof}
This follows from \cite[Thm.~B]{hsieh}; or alternatively \cite[Thm.~B]{burungale-II},
noting that by the discussion in \cite[p.~912]{prasanna}
our last assumption guarantees that the ratio of Petersson norms
$\alpha(f,f_B)$ in \cite[Thm.~5.6]{burungale-II} is a $p$-adic unit.
\end{proof}

Now we finally assemble all the pieces together.

\begin{proof}[Proof of Theorem~\ref{thm:HP-MC}]
By Theorem~\ref{thm:KS-argument}, ${\rm Sel}(K,\Tc)$ has $\Lambda^\ac$-rank $1$,
and so
$X_{\emptyset,0}(K,\Ac)$ is $\Lambda^\ac$-torsion by Lemma~\ref{lem:tors}.
Let $\mathfrak{P}$ 
be a height one prime of $\Lambda_{\unr}^\ac$,
and set $\mathfrak{P}_0=\mathfrak{P}\cap\Lambda^\ac$. By the divisibility in
Theorem~\ref{thm:KS-argument} we have
\begin{equation}\label{eq:upper}
{\rm length}_{\mathfrak{P}_0}(X(K,\Ac)_{\rm tors})
\leqslant 2\;{\rm length}_{\mathfrak{P}_0}
\biggl(\frac{{\rm Sel}(K,\Ac)}{\Lambda^\ac\cdot\mathbf{z}_f}\biggr).
\end{equation}
Combined with Lemma~\ref{lem:tors} and Lemma~\ref{lem:cas}, respectively, this implies that
\begin{align*}
{\rm length}_{\mathfrak{P}}(X_\pp(K,\Ac)_{\unr})
&\leqslant 2\;{\rm length}_{\mathfrak{P}}
\biggl(\frac{{\rm Sel}(K,\Tc)_{\unr}}{\Lambda_{\unr}^\ac\cdot\mathbf{z}_f}\biggr)+
2\;{\rm length}_{\mathfrak{P}}({\rm coker}({\rm loc}_{\pp})_{\unr})\\
&=2\;{\rm ord}_{\mathfrak{P}}(\mathscr{L}^{\tt BDP}_{\pp}(f/K)),
\end{align*}
and hence
\begin{equation}\label{eq:div}
Ch_{\Lambda_{\unr}^\ac}(X_{\emptyset,0}(K,\Ac))\;\supseteq\;
(\mathscr{L}_{\pp}^{\tt BDP}(f/K)^2)
\end{equation}
and ideals in $\Lambda_{\unr}^\ac$.
Now let $I^{\rm cyc}$ be the principal ideal of $\Lambda$ generated by $\gamma^{\rm cyc}-1$.
By a standard control theorem (\emph{cf.} \cite[Prop.~3.9]{SU}, or see \cite[\S{3.3}]{JSW} for a proof in our same context),
the natural restriction map $H^1(K_\infty^\ac,E[p^\infty])\rightarrow H^1(K_\infty,E[p^\infty])$
induces an isomorphism
\begin{equation}\label{eq:control}
X_{\emptyset,0}(K,\mathbf{A})/I^{\rm cyc}X_{\emptyset,0}(K,\mathbf{A})
\simeq X_{\emptyset,0}(K,\Ac) \nonumber
\end{equation}
as $\Lambda^\ac$-modules. Combined with Corollary~\ref{cor:wan-bdp},
the divisibility in Theorem~\ref{thm:wan-div} thus yields the divisibility
\[
Ch_{\Lambda_{\unr}^\ac}(X_{\emptyset,0}(K,\Ac)_{\unr})\subseteq(\mathscr{L}^{\tt BDP}_\pp(f/K)^2)
\]
in $\Lambda_{\unr}^\ac\otimes_{\bZ_p}\bQ_p$.
In particular, equality holds in (\ref{eq:div}) up to powers of $p\Lambda^\ac_{\unr}$,
and by Theorem~\ref{thm:mu-zero} the equality holds integrally.
Combined with Lemmas~\ref{lem:cas} and \ref{lem:tors},
this implies that for any height one prime
$\mathfrak{P}_0$ of $\Lambda^\ac$ 
equality in (\ref{eq:upper}) holds, thus concluding the proof.
\end{proof}

\begin{proof}[Proof of Corollary~\ref{cor:two-varIMC}]
As before, let $I^{\rm cyc}\subseteq\Lambda$ be the ideal generated by $\gamma^{\rm cyc}-1$, and
set
\[
X:=Ch_{\Lambda_{\unr}}(X_{\emptyset,0}(K,\mathbf{A})_{\unr}),\quad
Y:=(\mathscr{L}_\pp(f/K))\subseteq\Lambda_{\unr}.
\]
As in the proof of Theorem~\ref{thm:HP-MC}, the divisibility in Theorem~\ref{thm:wan-div}
combined with Theorem~\ref{thm:mu-zero} yields the divisibility $X\subseteq Y$ as ideals in $\Lambda_{\unr}$.
On the other hand, from the combination of (\ref{eq:control}) and Corollary~\ref{cor:wan-bdp},
Theorem~\ref{thm:HP-MC} implies that $X_{\emptyset,0}(K,\mathbf{A})$ is $\Lambda$-torsion and that
$X=Y\pmod{I^{\rm cyc}}$. The equality $X=Y$ in $\Lambda_{\unr}$
thus follows from \cite[Lem.~3.2]{SU}. This completes the proof of part (2) of Corollary~\ref{cor:two-varIMC}.
By Theorem~\ref{thm:2-varIMC},
it follows that the equality in
part (1) holds in $\Lambda[1/P]$. Since by Theorem~\ref{thm:Col}
 the Beilinson--Flach class $\mathcal{BF}$ restricts nontrivially to the cyclotomic line (see Remark~\ref{rem:non0}),
 the aforementioned equality holds in $\Lambda$, concluding the proof of the result.
\end{proof}

\end{appendix}

\bibliographystyle{amsalpha}
\bibliography{Heegner}

\end{document}